\theoremstyle{plain}
{
  \newtheorem{thm}{Theorem}[section]
  \newtheorem{defn}[thm]{Definition}
  \newtheorem{cor}[thm]{Corollary}
  \newtheorem{lem}[thm]{Lemma}
  \newtheorem{prop}[thm]{Proposition}
  \theoremstyle{definition}
  \newtheorem{rem}[thm]{Remark}
  \theoremstyle{plain}
  \newtheorem{clm}[thm]{Claim}
  \newtheorem{notation}[thm]{Notation}
  
  \newtheorem{constr}[thm]{Construction}
  
}
\renewcommand{\subsubsection}{\sssection\rm}
\newcommand{\can}{\text{\rm can}}
\newcommand{\id}{\text{\rm id}}
\newcommand{\pr}{\text{\rm pr}}
\newcommand{\red}{\text{\rm red}}
\newcommand{\inc}{\text{\rm inc}}
\newcommand{\const}{\text{\rm const}}
\newcommand{\Spec}{\text{\rm Spec}}
\newcommand{\Ker}{\text{\rm Ker}}
\newcommand{\Aff}{\mathbf {A}}
\newcommand{\Pro}{\mathbf {P}}
\newcommand{\cal}{\mathcal}
\newcommand \xra {\xrightarrow }
\newcommand \hra {\hookrightarrow }
\newcommand{\ttf}{{\text{f}}}
\renewcommand{\P}{\mathbb P}
\newcommand\mydim{\text{\rm dim}}
\renewcommand \id{\operatorname{id}}
\renewcommand \phi\varphi
\newcommand{\ad}{ad}
\newcommand{\R}{{\rm R}}
\begin{document}

\title
{Weak elementary fibrations}

\author{Ning GUO}
\author{Ivan Panin}
\address{St. Petersburg branch of V. A. Steklov Mathematical Institute, Fontanka 27, 191023 St. Petersburg, Russia}
\email{guo.ning@eimi.ru}
\email{paniniv@gmail.com}
\date{\today}
\subjclass[2010]{Primary 14F22; Secondary 14F20, 14G22, 16K50.}
\keywords{principal bundle, torsor, group scheme, elementary fibration}

\maketitle

\begin{abstract}
We introduce a notion of a weak elementary fibration and prove that it does exist in certain interesting cases.
Our notion is a modification of the M. Artin's notion of an elementary fibration.
\end{abstract}

\section{Weak elementary fibration: Definition and Result}\label{s:w_el_fibr_inf field}
In this Section we modify a result of M. Artin from~\cite{LNM305} concerning existence of nice neighborhoods.
The following notion is a modification of the one introduced by Artin in~\cite[Exp.~XI, D\'ef.~3.1]{LNM305}
and of its version introduced in \cite[Defn.~2.1]{PSV}.
In this preprint the term {\bf scheme} means a separated, quasi-compact, Noetherian of finite Krull dimension scheme
(as in \cite{MV}).
{\bf The main result} of the preprint is Theorem \ref{Very_General} stated right below.
This Theorem is subdivided into two cases: 
\begin{itemize}
\item[1)] the residue field $k(b)$ at the closed point $b\in B$ is finite (Theorem \ref{cor: w_el_fib_fin_A});
\item[2)] the residue field $k(b)$ at the closed point $b\in B$ is infinite (Theorem \ref{thm_Artin_2_A}).
\end{itemize}

\begin{defn}
\label{DefnElemFib}
Let $T$ be a scheme. A scheme morphism $q: T\to S$ is called
a \emph{weak elementary fibration} if this morphism can be included in a commutative diagram
\begin{equation}
\label{SquareDiagram}
    \xymatrix{
     T\ar[drr]_{q}\ar[rr]^{j}&&
\bar T\ar[d]_{\bar q}&&T_{\infty}\ar[ll]_{i}\ar[lld]^{q_{\infty}} &\\
     && S  &\\    }
\end{equation}
of morphisms satisfying the following conditions:
\begin{itemize}
\item[{\rm(i)}] $j$ is an open immersion dense at each fibre of
$\bar q$, and $T=\bar X-T_{\infty}$;
\item[{\rm(ii)}]
$\bar q$ is smooth projective all of whose fibres are equi-dimensional of dimension one;
\item[{\rm(iii)}] $q_{\infty}$ is a finite \'{e}tale morphism (all of whose fibres are non-empty);
\item[{\rm(iv)}] $q=pr_S \circ \pi$, where $\pi: T\to \mathbb A^1_S$ is a finite flat surjective morphism.
\end{itemize}
The diagram \eqref{SquareDiagram} is called \emph{a diagram of the weak elementary fibration} $q: T\to S$.

Let $Z\subset T$ be a closed subscheme.
A scheme morphism $q: T\to S$ is called
a \emph{weak elementary $Z$-fibration}
if it is a weak elementary fibration and the morphism
$q|_Z: Z\to S$ is finite. In this case the diagram \eqref{SquareDiagram}
is called \emph{a diagram of the weak elementary $Z$-fibration} $q: T\to S$.
\end{defn}

\begin{notation}
Let $B$ be a local regular scheme and $b\in B$ its closed point.
For a $B$-scheme $Y$ put $Y_b=Y\times_B b$. For a $B$-morphism
$\phi: Y'\to Y''$ of $B$-schemes write $\phi_b$ for the $b$-morphism
$\phi\times_B b: Y'\times_B b\to Y''\times_B b$.
\end{notation}

\begin{thm}\label{Very_General}
Let $B$ be a local regular scheme and $b\in B$ its closed point.
Let $X/B$ be an irreducible $B$-smooth, $B$-projetive scheme of relative dimension $r$.
Let $\underbar x\subset X$ be a finite family of closed points. Then there is an open subset $S$ in $\mathbb A^{r-1}_B$,
an open subscheme $X_S\subset X$ containing $\underbar x$ and a weak elementary fibration
$q_S: X_S\to S$.

Let $Z\subset X$ be a closed subscheme such that $\dim Z_b <r$. Then there is an open subset $S$ in $\mathbb A^{r-1}_B$,
an open subscheme $X_S\subset X$ containing $\underbar x$ and a weak elementary $Z_S$-fibration
$q_S: X_S\to S$, where $Z_S=Z\cap X_S$.
\end{thm}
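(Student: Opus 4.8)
The plan is to reduce everything to a construction on the closed fibre $X_b$ over the residue field $\kappa:=k(b)$, to carry out there the classical projection construction in the style of Artin, and then to transport the outcome to all of $B$. Two observations make this legitimate. First, since $X\to B$ is proper, closed points of $X$ map to closed points of $B$; as $B$ is local this forces $\underline{x}\subset X_b$, so every condition we must secure at $\underline{x}$ is a condition on the closed fibre. Second, if $F\subset X$ is closed with $F\cap X_b=\varnothing$, then $F=\varnothing$: the image of $F$ in $B$ is closed (properness of $X\to B$), avoids $b$, and every nonempty closed subset of the spectrum of a local ring contains its closed point. This spreading device lets me check conditions which are cut out by a closed ``bad locus'' in $X$ (or in the $B$-proper $\bar T$) only on $X_b$; conditions that are merely \emph{open} (smoothness, \'etaleness) will instead define an open $U$ in the $B$-proper base $\mathbb{P}^{r-1}_B$ that is automatically good over all of $B$, so that I only need its closed fibre to be dense and to contain the relevant points, which is where Bertini over the infinite field $\kappa$ enters.

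Fix a projective embedding $X\hookrightarrow\mathbb{P}^N_B$. Over $\kappa$ I would choose, by a generic-position and Bertini argument, nested linear subspaces $\Lambda_0\subset\Lambda$ of dimensions $N-r-1$ and $N-r$, a point $c\in\mathbb{P}^r_\kappa$, and a hyperplane $H\subset\mathbb{P}^r_\kappa$ with $c\notin H$, subject to: (a) $\Lambda_0\cap X_b=\varnothing$, so projection from $\Lambda_0$ is a finite surjective morphism $\psi_b\colon X_b\to\mathbb{P}^r_\kappa$; (b) the induced rational projection $\rho_b=\mathrm{pr}_c\circ\psi_b\colon X_b\dashrightarrow\mathbb{P}^{r-1}_\kappa$, whose general fibres are curves, has smooth one-dimensional fibres over a dense open of $\mathbb{P}^{r-1}_\kappa$ and in particular along the fibres through $\underline{x}$; (c) the divisor $\psi_b^{-1}(H)$ is smooth of dimension $r-1$ and maps finite \'etale to $\mathbb{P}^{r-1}_\kappa$ over a dense open; and (d) the finite indeterminacy locus $\psi_b^{-1}(c)$ and the divisor $\psi_b^{-1}(H)$ both avoid $\underline{x}$. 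All of these are open dense conditions on the linear data, and the infinitude of $\kappa$ is exactly what lets me meet them while missing the finite set $\underline{x}$. I then lift $\Lambda_0,\Lambda,c,H$ to $B$-linear data (using that the local ring of $B$ surjects onto $\kappa$); by the spreading device, $\Lambda_0\cap X=\varnothing$ and $\psi\colon X\to\mathbb{P}^r_B$ is finite surjective over all of $B$, and $\rho=\mathrm{pr}_c\circ\psi\colon X\dashrightarrow\mathbb{P}^{r-1}_B$ is defined away from the finite locus $X\cap\Lambda$.

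To assemble diagram~\eqref{SquareDiagram}, let $\bar T\to\mathbb{P}^{r-1}_B$ be the relative curve obtained by resolving the finite indeterminacy $X\cap\Lambda$ (the closure of the graph of $\rho$), let $T_\infty\subset\bar T$ be the strict transform of $\psi^{-1}(H)$, and set $T=\bar T-T_\infty$. Let $\Delta\subset\mathbb{P}^{r-1}_B$ be the closed union of the discriminant of $\bar T\to\mathbb{P}^{r-1}_B$ and the branch locus of $T_\infty\to\mathbb{P}^{r-1}_B$; put $U=\mathbb{P}^{r-1}_B-\Delta$ and $S=U\cap\mathbb{A}^{r-1}_B$, choosing the hyperplane that defines this affine chart to avoid $\rho(\underline{x})$. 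By (b),(c) the fibre $U_b$ is dense and, by (d) and the smoothness along the fibres through $\underline{x}$, contains $\rho(\underline{x})$; since smoothness and \'etaleness are open, conditions (ii),(iii) then hold over all of $S$, and $X_S:=T|_S$ contains $\underline{x}$. Condition (i) is built in: $j\colon T\hookrightarrow\bar T$ is open and fibrewise dense with $T=\bar T-T_\infty$, and $T_\infty\to S$ is finite (from finiteness of $\psi$), \'etale, with nonempty fibres. For (iv), the complement $\mathbb{P}^r_B-H$ is the total space of a line bundle over $\mathbb{P}^{r-1}_B$ via $\mathrm{pr}_c$, hence trivial over $\mathbb{A}^{r-1}_B$ (whose Picard group vanishes, $B$ being regular local), giving $(\mathbb{P}^r_B-H)|_S\cong\mathbb{A}^1_S$; the map $\pi\colon X_S\to\mathbb{A}^1_S$ induced by $\psi$ is then finite and surjective, and flat by miracle flatness, as $X_S$ is $B$-smooth and $\mathbb{A}^1_S$ is $B$-regular with one-dimensional fibres. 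This gives the first assertion.

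For the $Z_S$-fibration I would enlarge $\Delta$ by two further closed subsets of $\mathbb{P}^{r-1}_B$, both of which meet $U_b$ in proper subsets because $\dim Z_b<r$: the locus $\{z:\bar C_z\subset Z\}$ (whose closed-fibre dimension is at most $\dim Z_b-1\le r-2<\dim S_b$), after whose removal $Z_S\to S$ is quasi-finite; and, choosing $H$ transverse to $Z_b$ so that $\dim(Z_b\cap T_{\infty,b})\le r-2$, the image of $Z\cap T_\infty$, after whose removal $Z\cap T_\infty=\varnothing$ over $S$, so that $Z_S$ is closed in the $S$-proper $\bar T$ and hence $Z_S\to S$ is proper. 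Proper plus quasi-finite yields finite. The main obstacle throughout is the \emph{simultaneous} realization of (ii) and (iii) — smoothness of every fibre of $\bar q$ together with \'etaleness of the section at infinity — on one open $S$ still containing $\underline{x}$; this rests entirely on Bertini-type generic smoothness over $\kappa$, available precisely when $\kappa$ is infinite. When $k(b)$ is finite such a hyperplane-Bertini argument fails and one must proceed by a separate route (reduction to the infinite case through a finite extension, or a Poonen-type high-degree Bertini), which is exactly why the theorem is split into the two cases and the finite-residue-field statement is proved independently.
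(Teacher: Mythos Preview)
Your overall strategy---reduce to the closed fibre, choose a generic finite projection $\psi\colon X\to\mathbb P^r_B$, blow up the indeterminacy locus of the further projection to $\mathbb P^{r-1}_B$, then restrict to an open $S$ over which the resulting relative curve is smooth---is the paper's. But your choice of the divisor at infinity produces a $T$ that is \emph{not} an open subscheme of $X$, whereas the theorem explicitly requires $X_S\subset X$ to be open.

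You take $T_\infty$ to be the strict transform of $\psi^{-1}(H)$ with $c\notin H$. Since $\psi^{-1}(c)\cap\psi^{-1}(H)=\varnothing$, the blowup center $X\cap\Lambda=\psi^{-1}(c)$ lies entirely in $T=\bar T-T_\infty$, so $T$ contains the whole exceptional divisor $E$ and the blowdown $\tau|_T\colon T\to X-\psi^{-1}(H)$ is not an isomorphism. Restricting to $S$ does not help: $E|_S\cong S\times_B\psi^{-1}(c)$ surjects onto $S$, so $T|_S$ always contains exceptional fibres and cannot be identified with an open of $X$. A related slip: $\mathbb P^r_B-H$ is not a line bundle over $\mathbb P^{r-1}_B$ via $\mathrm{pr}_c$, because that projection is undefined at $c\in\mathbb P^r_B-H$; it is the blowup $\hat{\mathbb P}^r_B$ minus the strict transform of $H$ that is a line bundle, and the finite map to it comes from $\hat\psi\colon\bar T\to\hat{\mathbb P}^r_B$, not from $\psi$.

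The paper makes the opposite choice: it takes $T_\infty$ to be the exceptional divisor itself. Over $S\subset\mathbb A^{r-1}_B=\{x_1\neq0\}$ this is $S\times_B\mathbb W$ with $\mathbb W$ the blowup center (arranged \'etale over $B$ by Bertini on $X_b$), hence finite \'etale over $S$. Then $\hat X_{x_1}-(E_{\mathbb W})_{x_1}$ is identified via $\tau$ with $\dot X:=X-\{s_1=0\}$, a genuine open of $X$, and the finite flat surjection $\pi\colon\dot X_S\to\mathbb A^1_S$ of condition~(iv) is simply the restriction of the finite map $[s_0:\cdots:s_r]\colon X\to\mathbb P^r_B$ to the affine chart $\{t_1\neq0\}$. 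This choice also streamlines the $Z$-assertion: one imposes $\mathbb W\cap Z=\varnothing$ among the Bertini conditions on $X_b$, whence $[s_1:\cdots:s_r]\colon Z\to\mathbb P^{r-1}_B$ is everywhere defined, affine and projective, hence finite, and no further shrinking of $S$ is needed for $\dot Z_S\to S$ to be finite.
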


\begin{thm}\label{Very_General_Diagram}
Under the hypotheses of Theorem \ref{Very_General} there is an open
$S\subset \mathbb A^{r}_B$, an open neighborhood $\dot X_S$ of the set ${\bf x}$ in $X$ and
a weak elementary fibration $\dot q_S: \dot X_S\to S$ and a commutative diagram of $S$-schemes
\begin{equation}
\label{V_G_SquareDiagram2}
    \xymatrix{
     \dot X_S\ar[drr]_{\dot q_S}\ar[rr]^{j_S}&&
\hat X_S\ar[d]_{\hat q_S}&& \mathbb W_S\ar[ll]_{i}\ar[lld]^{pr_S} &\\
     && S  &\\    }
\end{equation}
which is a diagram of the weak elementary fibration $\dot q_S: \dot X_S\to S$.

Moreover, if $Z\subset X$ is as in Theorem \ref{Very_General}, then
the open $S\subset \mathbb A^{r}_B$, the open neighborhood $\dot X_S$ of the set ${\bf x}$ in $X$,
the weak elementary fibration $\dot q_S: \dot X_S\to S$ and
the commutative diagram \eqref{V_G_SquareDiagram2} of $S$-schemes
can be chosen such that the \eqref{V_G_SquareDiagram2}
is a diagram of the weak elementary $\dot Z_S$-fibration
$\dot q_S: \dot X_S\to S$.
\end{thm}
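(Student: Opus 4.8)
The plan is to obtain the diagram \eqref{V_G_SquareDiagram2} by revisiting the construction that proves Theorem \ref{Very_General} and recording its geometric output explicitly, the only genuinely new point being that the locus at infinity can be put in product form so that its structure map is a projection. First I would apply Theorem \ref{Very_General} to the given data to produce an open $S$, an open neighbourhood $\dot X_S$ of $\mathbf{x}$, and a weak elementary fibration $\dot q_S\colon \dot X_S\to S$ together with a defining diagram \eqref{SquareDiagram}. Renaming $\hat X_S:=\bar T$, $j_S:=j$ and $\hat q_S:=\bar q$ already secures conditions (i), (ii) and (iv) of Definition \ref{DefnElemFib}: the open immersion is fibrewise dense with $\dot X_S=\hat X_S\setminus\mathbb W_S$, the morphism $\hat q_S$ is smooth projective with equidimensional one-dimensional fibres, and $\dot q_S$ factors through a finite flat surjective $\pi\colon \dot X_S\to\mathbb A^1_S$. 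What must be upgraded is the morphism at infinity, which in \eqref{SquareDiagram} is an arbitrary finite \'etale $q_\infty$, whereas \eqref{V_G_SquareDiagram2} demands the projection $pr_S$.

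To achieve this I would unwind the projective input of the construction: a closed embedding $X\hra\mathbb P^N_B$ together with a generic linear projection exhibits $\hat X_S$ as the fibrewise (normalized) closure of $\dot X_S$ inside a relative curve, the complement $\hat X_S\setminus\dot X_S$ being cut out by the hyperplane, respectively the centre of projection, that was deleted. After shrinking $S$ this complement is finite \'etale over $B$; the essential claim is that, for a sufficiently generic choice of coordinates, it is moreover pulled back from a fixed finite \'etale $B$-scheme $\mathbb W$, i.e.\ it is fibrewise constant along $S$. Granting this, I set $\mathbb W_S:=\mathbb W\times_B S$, so that $\hat X_S\setminus\dot X_S\cong\mathbb W_S$ over $S$ and its structure morphism becomes the projection $pr_S$, which is finite \'etale with non-empty fibres; this is exactly condition (iii) in the explicit shape required by \eqref{V_G_SquareDiagram2}.

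For the \emph{Moreover} assertion I would start instead from the weak elementary $Z_S$-fibration provided by the second part of Theorem \ref{Very_General}, so that $\dot q_S|_{\dot Z_S}$ is finite from the outset. Since finiteness of a morphism is preserved under restriction to an open of the target and under the base changes used to rigidify the locus at infinity, the additional shrinking of $S$ carried out above does not destroy the finiteness of $\dot q_S|_{\dot Z_S}$; hence the same explicit diagram \eqref{V_G_SquareDiagram2} is simultaneously a diagram of a weak elementary $\dot Z_S$-fibration.

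The step I expect to be the main obstacle is the triviality claim above: replacing a finite \'etale cover at infinity that varies with the point of $S$ by a genuine product $\mathbb W\times_B S$, uniformly over the family and compatibly with $Z$. This is the one piece of Theorem \ref{Very_General_Diagram} that is not already contained in Theorem \ref{Very_General}, and I would expect it to require a careful transversality argument for the section at infinity. A secondary point to reconcile is the base dimension: the construction of Theorem \ref{Very_General} naturally produces a base inside $\mathbb A^{r-1}_B$, whereas \eqref{V_G_SquareDiagram2} is stated over an open $S\subset\mathbb A^{r}_B$, and I would expect the extra coordinate to enter precisely through the choice used to make the cover at infinity constant. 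Once these two points are settled, the verification of conditions (i)--(iv) for \eqref{V_G_SquareDiagram2} is formal.
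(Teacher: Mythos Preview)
Your identification of the only substantive issue---that the finite \'etale boundary must be exhibited as a genuine product $S\times_B\mathbb W$---is correct, but your proposed route to it is inverted relative to the paper and leaves the key step unresolved. In the paper, Theorems \ref{Very_General} and \ref{Very_General_Diagram} are proved \emph{simultaneously} by the explicit constructions of Theorems \ref{el_fib_DVR_fin_k_v} and \ref{el_fib_DVR_inf_k_v}; one does not first obtain an abstract weak elementary fibration and then try to trivialize its boundary. Concretely: one chooses sections $s_0,\dots,s_r\in\Gamma(X,\mathcal O(e_i))$ via a Bertini-type theorem so that $\mathbb W:=X_{1,\dots,r}=\{s_1=\cdots=s_r=0\}$ is finite \'etale over $B$, and defines $\hat X$ as the (weighted) blowup of $X$ along $\mathbb W$, realized as a closed subscheme of $X\times\mathbb P^{r-1,w}$. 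Because the centre $\mathbb W$ is finite \'etale over $B$, the exceptional divisor is $E_{\mathbb W}=\mathbb P^{r-1,w}\times_B\mathbb W$ on the nose; restricting to the affine chart $\{x_1\neq 0\}\cong\mathbb A^{r-1}_B$ and then to $S$ gives $\mathbb W_S=S\times_B\mathbb W$ with structure map $pr_S$ automatically. No additional transversality argument is needed: the product structure is built into the blowup, not verified afterwards.

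Your description of $\hat X_S$ as a ``fibrewise normalized closure'' cut out by a hyperplane is not what the paper does and would not obviously yield the product form; it is the blowup picture that makes the boundary manifestly constant along $S$. Your observation about $\mathbb A^r_B$ versus $\mathbb A^{r-1}_B$ is well taken: throughout the actual constructions (Notation \ref{not:main_A}, \ref{sect:2_DVR_2} and the diagrams \eqref{SquareDiagram_1}, \eqref{SquareDiagram_0_DVR_2}) the base $S$ is open in $\mathbb A^{r-1}_B$, so the $\mathbb A^r_B$ in the statement of Theorem \ref{Very_General_Diagram} appears to be a typo rather than an extra coordinate with geometric content.
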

\begin{rem}\label{ext_of_V_G_SquareDiagram2}
It turns out that the diagram \eqref{V_G_SquareDiagram2} comes equipped with a diagram of $B$-schemes of the form
\begin{equation}
\label{V_G_Additional}
    \xymatrix{
     X                       &&  \hat X \ar[ll]^{\tau}  &  \\
     \dot X_S \ar[u]^{in} \ar[rr]^{j_S} && \hat X_S \ar[u]_{In} && \mathbb W_S \ar[ll]^{i_S} &\\
     \dot Z_S \ar[u]^{}                       && \tau^{-1}(\dot Z_S) \ar[ll]^{\cong \ \tau} \ar[u]^{} && \emptyset \ar[u]^{} \ar[ll]^{}&\\    }
\end{equation}
such that
\begin{itemize}
\item[{\rm(i)}] $in$, $In$ are open embedding and $in=\tau\circ In\circ j$;
\item[{\rm(ii)}]
the morphism $\dot X_S \xleftarrow{\tau|_{\hat X_S-\mathbb W_S}} \hat X_S-\mathbb W_S$ is well-defined and it is an isomorphism;
\item[{\rm(iii)}]
$(\tau|_{\hat X_S-\mathbb W_S})\circ j=id_{\dot X_S}$ and $j\circ (\tau|_{\hat X_S-\mathbb W_S})=id_{\hat X_S-\mathbb W_S}$;
\item[{\rm(iv)}]
the morphism $j: \dot X_S\to \hat X_S-\mathbb W_S$ is an isomorphism;
\item[{\rm(v)}]
$\tau^{-1}(\dot Z_S)\cap \mathbb W_S=\emptyset$;
\end{itemize}
\end{rem}
Let ${\bf G}$ be a reductive $B$-group scheme and $\cal E$ be a principal ${\bf G}$-bundle over $X$.
Under the hypotheses of Theorem \ref{Very_General} we easily get the following result as
an immediate consequence of Theorem \ref{Very_General_Diagram} and Remark \ref{ext_of_V_G_SquareDiagram2}
\begin{cor}\label{Bundle}
Let $Z\subset X$ be a closed subset as in Theorem \ref{Very_General}.
Consider the diagrams as in Theorem \ref{Very_General} and Remark \ref{ext_of_V_G_SquareDiagram2}.
Clearly,
$$\hat X_S=(\hat X_S-\mathbb W_S)\cup (\hat X_S-\tau^{-1}(\dot Z_S))\cong \dot X_S\cup (\hat X_S-\tau^{-1}(\dot Z_S))$$
Consider the
principal ${\bf G}$-bundle $\hat {\cal E}_S:= In^*(\tau^*(\cal E))$ over $\hat X_S$.
Suppose the bundle ${\cal E}|_{X-Z}$ is trivial.
Then following is true
\begin{itemize}
\item[{\rm(i)}]
the bundle $\hat {\cal E}_S$ being restricted to $\hat X_S-\tau^{-1}(\dot Z_S)$ is trivial;
\item[{\rm(ii)}]
the closed subscheme scheme $\tau^{-1}(\dot Z_S)$ of $\hat X_S$ is $S$-finite;
\item[{\rm(iii)}]
the bundle $j^*_S(\hat {\cal E}_S)={\cal E}|_{\dot X_S}$;
\item[{\rm(iv)}]
${\bf x}$ is in $\dot X_S$.
\end{itemize}
\end{cor}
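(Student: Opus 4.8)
The plan is to read off each of the four assertions from the diagram \eqref{V_G_Additional} together with properties (i)--(v) of Remark \ref{ext_of_V_G_SquareDiagram2}, writing $\psi:=\tau\circ In\colon \hat X_S\to X$ so that $\hat{\cal E}_S=\psi^*\cal E$ by definition. Assertion (iv) is immediate: it is the defining property of $\dot X_S$ in Theorem \ref{Very_General_Diagram}, namely that $\dot X_S$ is an open neighborhood of ${\bf x}$. For (iii), property (i) of the Remark gives $in=\tau\circ In\circ j_S=\psi\circ j_S$ (with $j=j_S$), hence $j_S^*(\hat{\cal E}_S)=j_S^*In^*\tau^*\cal E=(\psi\circ j_S)^*\cal E=in^*\cal E=\cal E|_{\dot X_S}$, since $in$ is precisely the open embedding of $\dot X_S$ into $X$.

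For (ii) I would use the bottom row of \eqref{V_G_Additional}. Property (v) gives $\tau^{-1}(\dot Z_S)\cap\mathbb W_S=\emptyset$, so $\tau^{-1}(\dot Z_S)\subseteq \hat X_S-\mathbb W_S$, and on $\hat X_S-\mathbb W_S$ the morphism $\tau$ is, by (ii)--(iv) of the Remark, the inverse of the $S$-morphism $j_S$; in particular its restriction $\tau\colon\tau^{-1}(\dot Z_S)\xrightarrow{\ \cong\ }\dot Z_S$ is an isomorphism of $S$-schemes. Since $\dot q_S\colon\dot X_S\to S$ is a weak elementary $\dot Z_S$-fibration, the map $\dot q_S|_{\dot Z_S}\colon\dot Z_S\to S$ is finite by Definition \ref{DefnElemFib}; transporting this finiteness through the $S$-isomorphism shows that $\tau^{-1}(\dot Z_S)$ is finite over $S$, which is (ii).

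The substance is in (i), which I would prove by showing that $\psi$ carries $\hat X_S-\tau^{-1}(\dot Z_S)$ into the trivialising open $X-Z$; granting this, $\hat{\cal E}_S|_{\hat X_S-\tau^{-1}(\dot Z_S)}=\psi^*(\cal E|_{X-Z})$ is the pullback of a trivial bundle, hence trivial. Thus it suffices to prove $\psi^{-1}(Z)=\tau^{-1}(\dot Z_S)$, equivalently $\tau^{-1}(Z)\cap\hat X_S=\tau^{-1}(\dot Z_S)$. The inclusion $\supseteq$ is immediate from $\dot Z_S\subseteq Z$. For $\subseteq$ I would split $\hat X_S=(\hat X_S-\mathbb W_S)\sqcup\mathbb W_S$: on $\hat X_S-\mathbb W_S$ the map $\tau$ is an isomorphism onto $\dot X_S$, so any point whose image lies in $Z$ has image in $Z\cap\dot X_S=\dot Z_S$ and therefore lies in $\tau^{-1}(\dot Z_S)$.

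The main obstacle is the behaviour on the part at infinity $\mathbb W_S$: one must rule out that $\tau$ sends a point of $\mathbb W_S$ into $Z$, i.e. that $\tau(\mathbb W_S)\cap Z\neq\emptyset$. Property (v) only yields $\tau(\mathbb W_S)\cap\dot Z_S=\emptyset$, so the disjointness from all of $Z$ is precisely the point where I would invoke the construction underlying Theorem \ref{Very_General_Diagram}: the finite \'etale section at infinity $\mathbb W_S=T_\infty$ is produced from a projection chosen generically away from the relatively lower-dimensional, $S$-finite closed set $Z$, so that $\tau^{-1}(Z)\cap\mathbb W_S=\emptyset$ and hence $\tau^{-1}(Z)\cap\hat X_S=\tau^{-1}(\dot Z_S)$. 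With this the factorisation of $\psi$ through $X-Z$ holds and (i) follows. The decomposition $\hat X_S=(\hat X_S-\mathbb W_S)\cup(\hat X_S-\tau^{-1}(\dot Z_S))$ recorded at the start of the Corollary is then exactly the open cover on which $\hat{\cal E}_S$ restricts to $\cal E|_{\dot X_S}$ and to a trivial bundle, respectively.
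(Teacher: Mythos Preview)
Your argument is correct and is exactly the kind of verification the paper has in mind: the Corollary is stated there as ``an immediate consequence of Theorem \ref{Very_General_Diagram} and Remark \ref{ext_of_V_G_SquareDiagram2}'' with no further proof, so your unpacking via $\psi=\tau\circ In$ and the properties (i)--(v) of the Remark is precisely what is needed.

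The one place you hesitate --- showing $\tau(\mathbb W_S)\cap Z=\emptyset$ rather than merely $\tau(\mathbb W_S)\cap\dot Z_S=\emptyset$ --- is handled exactly as you suspect, and in fact more cleanly than you phrase it. In the construction (see Notation \ref{not:main_A} in the finite residue field case, and Corollary \ref{GeneralSection_inf k}(iv)/Notation \ref{sect:2_DVR_2} in the infinite case) the sections $s_1,\dots,s_r$ are chosen so that the center of the blowup $\mathbb W=X_{1,\dots,r}$ satisfies $\mathbb W\cap Z=\emptyset$. Since $\tau$ collapses the exceptional locus $E_{\mathbb W}$ to $\mathbb W$, one has $\tau(\mathbb W_S)\subseteq\mathbb W$, and hence $\tau(\mathbb W_S)\cap Z=\emptyset$. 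So your appeal to ``the construction underlying Theorem \ref{Very_General_Diagram}'' is not a loose gesture but a reference to an explicitly recorded property; you may cite it directly.
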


\begin{rem}\label{Moral}
We begin with a principal ${\bf G}$-bundle $\cal E$ over $X$ trivial away of the closed subscheme $Z$
as in Theorem \ref{Very_General}. Then the {\it smooth projective curve} $\hat X_S$ over $S$,
the ${\bf G}$-bundle $\hat {\cal E}_S$ over $\hat X_S$, the set ${\bf x}$ and the closed subscheme $Z$
enjoy the following properties: \\
1) $(\hat {\cal E}_S)|_{\dot X_S}$ equals ${\cal E}|_{\dot X_S}$; \\
2) $(\hat {\cal E}_S)|_{\hat X_S-\tau^{-1}(\dot Z_S)}$ is trivial;\\
3) ${\bf x}$ is in $\dot X_S$;\\
4) the closed subscheme scheme $\tau^{-1}(\dot Z_S)$ of $\hat X_S$ is $S$-finite.
\end{rem}
\subsection*{Acknowledgements} 
The authors thank the excellent environment of the International Mathematical Center at POMI.
This work is supported by Ministry of Science and Higher Education of the Russian Federation, agreement \textnumero~ 075-15-2022-289. 
\section{A moving lemma }\label{sect:moving}
\begin{defn}\label{aff_rel_S}
Let $S$ be an affine scheme and $O_S=\Gamma(S,\cal O_S)$. Recall that an affine $S$-scheme is a morphism
$T\to S$ with an affine scheme $T$ such that $O_T$ is a finitely generated $O_S$-algebra.
\end{defn}
The following result is proved in \cite[Thm. 1.2]{GPan}.
\begin{thm}(\cite[Thm. 1.2]{GPan})\label{Moving_for_DVR}
Let $R$ be a discrete valuation ring, $X$ be an $R$-smooth $R$-projective irreducible $R$-scheme of pure
relative dimension $r$. Let ${\bf x}\subset X$ be a finite subset of closed points. Put
$U=Spec (\mathcal O_{X,{\bf x}})$.
Let $Z\subset X$ be a closed subset
avoiding all generic
points of the closed fibre of $X$.
Then there is
an affine $U$-smooth scheme $\cal X$ of relative dimension one
with a closed subset ${\cal Z}\subset {\cal X}$
fitting into the following commutative diagram
\begin{equation}
\label{diag:A1_homotopy}
    \xymatrix{
     \mathbb A^1_U - \tau(\cal Z) \ar[d]_{inc_{\P}}           &&  \cal X- \cal Z \ar[ll]^{\tau|_{\cal X- \cal Z}} \ar[d]_{Inc} \ar[rr]^{p_{X-Z}} && X-Z \ar[d]_{inc} &  \\
     \mathbb A^1_U                 && \cal X \ar[ll]^{\tau}  \ar[rr]^{p_X} && X  &\\
                          && \cal Z \ar[llu]^{\tau|_{\cal Z}} \ar[u]^{I} \ar[rr]^{p_Z} && Z \ar[u]^{i} &\\    }
\end{equation}
and subjecting the conditions listed below \\
(i) $\tau$ is an \'{e}tale morphism such that $\tau|_{\cal Z} : {\cal Z}\to \mathbb  A^1_U$ is a closed embedding;\\
(ii) the left square is a Nisnevich square in the category of smooth $U$-schemes;\\
(iii) both right-hand side squares are commutative; \\
(iv) the structure morphism $p_U : {\cal X}\to U$ defined as $pr_U\circ \tau$ permits a $U$-section
$\Delta: U \to \cal X$;\\
(v) we have $p_X \circ \Delta=can$, where $can: U \hra X$ is the open immersion;\\
(vi) the $i_0:=\tau \circ \Delta$ is the zero section $i_0: U \to \mathbb A^1_U$
of the projection $pr_U : \mathbb A^1_U\to U$; \\
(vii) $\cal Z$ is $U$-finite; \\
(viii) $\tau(\mathcal Z)\cap \{1\}\times U=\emptyset$
\end{thm}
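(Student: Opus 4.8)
\emph{Proof proposal.} The plan is to construct $\cal X$ by the geometric presentation technique of Gabber--Quillen, in the relative ``nice triple'' form of Ojanguren--Panin, now carried out over the base $\Spec R$ rather than over a field. All of the data in \eqref{diag:A1_homotopy} will be produced from a single geometric input: a relative-curve structure on an affine neighbourhood of ${\bf x}$ over an $(r-1)$-dimensional $R$-base $B$, together with a suitable affine coordinate $t$. The scheme $\cal X$ is then obtained as a fibre product over $B$, and most of the conditions (i)--(viii) become formal consequences of base change together with one normalisation of $t$.

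First I would produce the projection. Using $R$-projectivity, embed $X\hra\mathbb P^N_R$ and, by a generic linear projection, obtain a finite surjective $R$-morphism $X\ra\mathbb P^r_R$; refining near ${\bf x}$ I would arrange, on an affine open $X^0\ni{\bf x}$, a smooth morphism $p\colon X^0\ra B$ with $B$ an affine open in $\mathbb A^{r-1}_R$, so that $X^0$ is a relative curve over $B$, and a coordinate $t\colon X^0\ra\mathbb A^1$ for which $(t,p)\colon X^0\ra\mathbb A^1_B$ is \'etale. I would also demand that $Z\cap X^0\ra B$ be finite. The hypothesis that $Z$ avoids all generic points of the closed fibre forbids any component of $Z$ from being dense in $X$, whence $\dim Z_\eta\le r-1$ and $\dim(Z\cap X_s)\le r-1$; thus $Z$ can be made quasi-finite over the $(r-1)$-dimensional base $B$, and properness coming from projectivity upgrades quasi-finite to finite.

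With this in hand I set $U=\Spec(\mathcal O_{X,{\bf x}})$, endow it with the $B$-structure $p\circ can$, and define
$$\cal X := X^0\times_B U,\qquad \cal Z:= (Z\cap X^0)\times_B U.$$
Then $p_U=pr_2\colon\cal X\ra U$ is smooth of relative dimension one (base change of $p$) and $\cal Z\ra U$ is finite, giving (vii). The first projection yields $p_X\colon\cal X\ra X^0\hra X$, and the diagonal $\Delta(u)=(can(u),u)$ is a section of $p_U$ with $p_X\circ\Delta=can$, giving (iv),(v). For $\tau$ I take the difference coordinate
$\tau=\bigl(\,t\circ pr_1-(t\circ can)\circ pr_2,\ pr_2\,\bigr)\colon\cal X\ra\mathbb A^1_U$:
subtracting a function pulled back from $U$ is a $U$-automorphism of $\mathbb A^1_U$, so $\tau$ differs by such an automorphism from the \'etale map induced by $(t,p)$ and is therefore \'etale; moreover this difference vanishes on the diagonal, so $\tau\circ\Delta=i_0$ is the zero section, giving (vi) and the \'etaleness in (i). The morphisms $p_{X-Z}$, $Inc$, $I$, $inc$, $i$ are the evident restrictions, so the right-hand squares commute, giving (iii).

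It remains to secure (ii) and the finer parts of (i) and (viii). For (viii) I normalise $t$ (compose with a $U$-automorphism of $\mathbb A^1_U$) so that the $U$-finite closed set $\tau(\cal Z)$ avoids the section $\{1\}\times U$, which is possible because $\cal Z$ is $U$-finite and $U$ is semilocal. For (ii) I must arrange that $\tau|_{\cal Z}$ is a closed immersion and that $\tau^{-1}(\tau(\cal Z))=\cal Z$; after that $\tau\colon\cal X-\cal Z\ra\mathbb A^1_U-\tau(\cal Z)$ is an isomorphism and the left square is an elementary distinguished Nisnevich square. This is done by shrinking $\cal X$ to discard the unwanted sheets of $\tau$ over $\tau(\cal Z)$, using semilocality of $U$ to separate the finite closed set $\cal Z$ from the rest of its preimage, and by choosing the projection generically enough that $\tau$ is radicial on $\cal Z$. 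The main obstacle is the very first step: producing a \emph{single} linear projection that is simultaneously generic on the generic fibre $X_\eta$ and on the closed fibre $X_s$, controlling at once the smoothness of $p$ near ${\bf x}$, the \'etaleness of $(t,p)$, and the finiteness of $Z\cap X^0$ over $B$. Over an infinite residue field this is classical Bertini; when the residue field of $R$ is finite the naive generic choice fails and one must invoke a Poonen-type Bertini over finite fields (or adjoin auxiliary points and descend from a cover). This finite-residue-field difficulty is precisely what forces the split of the main result into the two cases of Theorem~\ref{Very_General}.
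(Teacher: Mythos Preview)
Your outline follows the classical Gabber--Quillen/Ojanguren--Panin ``nice triple'' template and is broadly sound for the infinite residue field case. Note, though, that the paper does not prove Theorem~\ref{Moving_for_DVR} here; it is imported from \cite{GPan}. Remark~\ref{Rem:Moving_for_DVR} sketches the intended architecture: the right-hand portion of diagram~\eqref{diag:A1_homotopy} together with properties (iii), (iv), (v), (vii) is obtained from the weak elementary $Z$-fibration of Theorem~\ref{Very_General_Diagram} (pulled back to $U$), and the left-hand Nisnevich square with (i), (ii), (vi) is then built separately in \cite{GPan}. So the paper's route passes through the blow-up/weighted-projective machinery of Sections~\ref{sect:f_field_case}--\ref{blowups_A_inf_A}, rather than through a bare linear projection as you propose; your construction of $p\colon X^0\to B$ is essentially a shortcut to what that machinery produces.

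There is a genuine gap in your handling of (i) and (ii). You write that $\tau|_{\cal Z}$ becomes a closed immersion ``by choosing the projection generically enough that $\tau$ is radicial on $\cal Z$''. Over a finite residue field this can be \emph{impossible}, not merely delicate: if a closed fibre ${\cal Z}_b$ has more $k(b)$-rational points than $\mathbb A^1_{k(b)}$ does, no choice of affine coordinate $t$ can separate them. The paper makes exactly this point in Remark~\ref{Rem:Moving_for_DVR} (the example $k(b)=\mathbb F_3$ with ${\cal Z}_b$ having $4$ or $11$ rational points). Your invocation of Poonen's Bertini addresses the existence of the fibration $p$ (this is how Theorem~\ref{Main3} is obtained), but it does not address the counting obstruction for $\tau$; these are two different finite-field problems, and you have conflated them by locating ``the main obstacle'' only in the first step. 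The actual fix in \cite[\S3]{GPan} for the finite case requires a genuinely different construction of $\tau$ (not a mere genericity or shrinking argument), and your parenthetical ``adjoin auxiliary points and descend from a cover'' is not a substitute for it.

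A secondary point: after you shrink $\cal X$ to excise extra sheets over $\tau(\cal Z)$, you must check that the resulting open still carries a finite surjective map to $\mathbb A^1_U$ (or at least remains affine with the section $\Delta$ intact). This is where the ``nice triple'' formalism does real work (cf.\ Lemma~\ref{Lemma_8_2}-type statements in the appendix material), and it should not be left implicit.
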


\begin{rem}\label{Rem:Moving_for_DVR}
The closed subset
$Z\subset X$ avoids all generic
points of the closed fibre of $X$.
It follows that $\dim Z_b < r$.
Thus the $Z\subset X$ is as in Theorem \ref{Very_General}.
Put $B=\mathrm{Spec} (R)$ and let $b\in B$ is its the closed point.

By Theorem \ref{Very_General_Diagram}
an open
$S\subset \mathbb A^{r}_B$, an open neighborhood $\dot X_S$ of the set ${\bf x}$ in $X$
and a morphism $\dot q_S: \dot X_S\to S$
can be chosen such that the diagram \eqref{V_G_SquareDiagram2}
is a diagram of the weak elementary $\dot Z_S$-fibration
$\dot q_S: \dot X_S\to S$, where $\dot Z_S=Z\cap \dot X_S$.
Put $U=\mathrm{Spec} (\mathcal O_{X,{\bf x}})$.


We constructed
the right hand side squares of the diagram \eqref{V_G_SquareDiagram2}.
By Theorem \ref{Very_General_Diagram} they enjoy the properties (iii),(iv),(v) and (vii).


We still work in the infinite residue field case (the field $k(b)$ is infinite).
It remains to construct the left hand side square
and
to check
(i), (ii) and (vi).
In the infinite residue field case ($k(b)$ is infinite)
this is done in \cite[Section 4]{GPan}.

In the finite residue field case ($k(b)$ {\bf is finite})
sometimes just there is no morphism $\tau$ enjoying the propetry (i).
Say, this happen if $k(b)=\mathbb F_3$ and the number of $k(b)$-rational points
of ${\cal Z}_b$ is strictly greater that 3 (for instance 4 or 11).
In this case the diagram \eqref{diag:A1_homotopy} enjoying the propeties
(i) to (vii)
is constructed in
\cite[Section 3]{GPan}.
\end{rem}

\begin{thm}(\cite[Thm.~1.3]{GPan})
\label{MainHomotopy}
Let $R$, $X$, ${\bf x}\subset X$, $r$, $U=Spec (\mathcal O_{X,\bf x})$ be as in Theorem \ref{Moving_for_DVR}.
Let ${\bf G}$ be a reductive $R$-group scheme and
$\mathcal G/X$ be a principal ${\bf G}$-bundle
trivial over the generic point $\eta$ of $X$.
Then there
exists a principal ${\bf G}$-bundle $\mathcal G_t$ over $\mathbb A^1_U$
and a closed subset $\mathcal Z'$ in $\mathbb A^1_U$ finite over $U$
such that
\par
(i) the ${\bf G}$-bundle $\mathcal G_t$ is trivial over the open subscheme
$\mathbb A^1_U-\mathcal Z'$ in $\mathbb A^1_U$,
\par
(ii) the restriction of $\mathcal G_t$ to $\{0\}\times U$ coincides
with the restriction of $\mathcal G$ to $U$;
\par
(iii) $\mathcal Z'\cap \{1\}\times U=\emptyset$;
\end{thm}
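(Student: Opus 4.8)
The plan is to transport $\mathcal{G}$ from $X$ to $\mathbb A^1_U$ through the étale correspondence supplied by the moving lemma and then to glue it to a trivial bundle along the Nisnevich square of diagram \eqref{diag:A1_homotopy}.

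First I would upgrade the hypothesis ``$\mathcal{G}$ is trivial over $\eta$'' to a statement about an honest open subset of $X$ whose complement is admissible for Theorem \ref{Moving_for_DVR}. Let $\xi_1,\dots,\xi_m$ be the generic points of the closed fibre $X_b$; since $X$ is regular these are codimension-one points, so the semilocalization $A=\mathcal O_{X,\{\xi_1,\dots,\xi_m\}}$ is a semilocal Dedekind domain whose generic point is $\eta$. The restriction $\mathcal{G}|_{\Spec A}$ is a ${\bf G}$-torsor trivial over $\eta$, hence trivial by the one-dimensional (Dedekind) case of the Grothendieck--Serre conjecture due to Nisnevich. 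Spreading out a trivialization yields an open $V\subset X$ containing $\eta$ and every $\xi_i$ with $\mathcal{G}|_V$ trivial; setting $Z=X-V$ produces a closed subset avoiding all generic points of the closed fibre, which is exactly the hypothesis of Theorem \ref{Moving_for_DVR}.

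Next I would apply Theorem \ref{Moving_for_DVR} to this $Z$ to obtain the diagram \eqref{diag:A1_homotopy} and pull $\mathcal{G}$ back along $p_X\colon\mathcal X\to X$. Since the right squares commute, $(p_X^*\mathcal{G})|_{\mathcal X-\mathcal Z}=p_{X-Z}^*(\mathcal{G}|_{X-Z})$, which is trivial. The left square of \eqref{diag:A1_homotopy} is a Nisnevich (elementary distinguished) square, so by Nisnevich descent for torsors under the affine smooth group scheme ${\bf G}$ the datum of $p_X^*\mathcal{G}$ on $\mathcal X$, the trivial bundle on $\mathbb A^1_U-\tau(\mathcal Z)$, and an isomorphism of their restrictions over the overlap $\mathcal X-\mathcal Z$ glues to a ${\bf G}$-bundle $\mathcal{G}_t$ on $\mathbb A^1_U$; the required gluing isomorphism exists precisely because $(p_X^*\mathcal{G})|_{\mathcal X-\mathcal Z}$ is trivial. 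I then set $\mathcal Z'=\tau(\mathcal Z)$, which is closed in $\mathbb A^1_U$ and $U$-finite because $\tau|_{\mathcal Z}$ is a closed embedding and $\mathcal Z$ is $U$-finite.

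Finally I would read off the three properties. Property (i) is immediate, as $\mathcal{G}_t$ restricts to the trivial bundle on $\mathbb A^1_U-\tau(\mathcal Z)=\mathbb A^1_U-\mathcal Z'$. For (ii) the zero section factors as $i_0=\tau\circ\Delta$ with $p_X\circ\Delta=can$, and $\tau^*\mathcal{G}_t\cong p_X^*\mathcal{G}$ by construction, so $i_0^*\mathcal{G}_t\cong\Delta^*p_X^*\mathcal{G}=can^*\mathcal{G}=\mathcal{G}|_U$, giving the asserted coincidence. Property (iii) is condition (viii) of Theorem \ref{Moving_for_DVR}, namely $\tau(\mathcal Z)\cap(\{1\}\times U)=\emptyset$. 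The main obstacle is the first step: reducing triviality at the single point $\eta$ to triviality on an open set whose complement misses the generic points of the closed fibre, which is where the Dedekind case of Grothendieck--Serre is needed; once that open set is in hand, the gluing and the verification of (i)--(iii) are formal.
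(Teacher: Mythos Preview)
Your proposal is correct and follows essentially the same route as the paper. The only difference lies in the first reduction step: the paper invokes \cite{NG} to conclude directly that $\mathcal G|_U$ is trivial and then spreads this out to an open $X-Z$, whereas you localize at the generic points $\xi_i$ of the closed fibre and apply the semilocal Dedekind case of Grothendieck--Serre there. Both routes produce the required closed $Z$ missing the $\xi_i$'s. One small correction: the semilocal Dedekind case in full generality (arbitrary residue fields) is exactly the main result of \cite{NG}, not Nisnevich's original theorem, so your attribution should be adjusted accordingly. After this first step the two arguments coincide verbatim: apply Theorem~\ref{Moving_for_DVR}, set $\mathcal Z'=\tau(\mathcal Z)$, glue $p_X^*\mathcal G$ with the trivial bundle along the Nisnevich square to obtain $\mathcal G_t$, and read off (i)--(iii) from properties (ii), (v), (vi), (vii), (viii) of the diagram.
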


\begin{proof}[Proof of Theorem \ref{MainHomotopy}]
The principal ${\bf G}$-bundle $\mathcal G$ is
trivial over the generic point $\eta$ of $X$.
The main theorem of \cite{NG} yields the triviality of the principal ${\bf G}$-bundle $\mathcal G|_U$.
In turn, this shows that there is a closed subset
$Z\subset X$ avoiding all generic
points of the closed fibre of $X$
and such that
the principal ${\bf G}$-bundle $\mathcal G|_{X-Z}$ is trivial.
Now consider the diagram \eqref{diag:A1_homotopy} as in
Theorem \ref{Moving_for_DVR}
and take the closed subscheme
${\cal Z}'=\tau(\cal Z)$
of the scheme $\mathbb A^1_U$.

Clearly, $\mathcal Z'$ is finite over $U$ and it enjoys the property (iii).
To construct a desired principal ${\bf G}$-bundle $\mathcal G_t$ over $\mathbb A^1_U$
note that the bundle $p^*_X({\cal E})|_{\cal X- \cal Z}$ is trivial. Since
the left square is a Nisnevich square in the category of smooth $U$-schemes
there is a principal ${\bf G}$-bundle $\mathcal G_t$ over $\mathbb A^1_U$
such that \\
1) $\tau^*(\mathcal G_t)=p^*_X({\cal E})$;\\
2) $\mathcal G_t|_{\mathbb A^1_U-\mathcal Z'}$ is trivial. \\
Now the properties (v) and (vi) of the diagram \eqref{diag:A1_homotopy}
show that
the principal ${\bf G}$-bundle $\mathcal G_t$
enjoys the property (ii).
That is the restriction of $\mathcal G_t$ to $\{0\}\times U$ coincides
with the restriction of $\mathcal G$ to $U$.

The proof of the theorem is completed.
\end{proof}

\section{A finite field case of Theorem \ref{Very_General}}\label{sect:f_field_case}
The main aim of this section is to state Theorem \ref{thm_Artin_2_fin},
which will be proven in section \ref{blowups}.
\begin{notation}\label{not:main}
Let $\mathbb F$ be a finite field, $n\geq 1$ an integer.\\
Let $\mathbb P^n:=\mathbb P^n_{\mathbb F}$ be the projective space over $\mathbb F$. \\
Let $S_{\mathbb F} = \mathbb F[T_0,...,T_n]$ be the homogeneous coordinate ring of $\mathbb P^n$.\\
let $S_{\mathbb F,d} \subset S_{\mathbb F}$ be the $\mathbb F$-subspace of homogeneous degree $d$ polynomials,\\
let $S_{\mathbb F,\mathrm{hom}} = \sqcup^{\infty}_{d=0} S_{\mathbb F,d}$ (the disjoint union). \\
Let $f\in S_{\mathbb F,d}$ be a degree $d$ homogeneous polynomial. \\
Let $H_{f}\subset \mathbb P^n$ be the closed subscheme of $\mathbb P^n$ defined by the homogeneous ideal $(f)\subset S_{\mathbb F}$.\\
Let $X$ be an $\mathbb F$-smooth projective equidimensional subscheme of $\mathbb P^n$. \\
Let $f_0,f_1,...,f_r\in S_{\mathbb F,hom}$ be homogeneous polynomials. \\
Let $X_i$ be the closed subscheme of $X$ equals $H_{f_i}\cap X$ (the scheme intersection). \\
For any $I\subset \{0,1,...,r\}$ write $X_I$ for the scheme intesection $\cap_{i\in I}X_i$ in $X$.
\end{notation}

Using \cite{Poo1} and \cite{Poo2} one can prove the following result
\begin{thm}(Bertini type theorem 3).
\label{Main3}
Let $X$ be a smooth projective equidimensional subscheme of $\mathbb P^n$.
Let $\underline x=\{x_1,...,x_l\}$ be a finite set of closed points in $X$.
Let $r\geq 1$ be the dimension of $X$.
There exist homogeneous polynomials
$f_0,f_1,...,f_r$ of degrees $e_0,e_1,...,e_r$ respectively
such that the subschemes $X_i$ enjoy the following properties: \\
0) $X_{0,...,r}=\emptyset$; \\
1) the schemes $X_{2,...,r}$ and $X_{1,...,r}$ are $\mathbb F$-smooth of dimensions $1$ and $0$ respectively; \\
3) $X_0$ and $X_1$ do not contain any point of $\underline x$; \\
3) for any $i>1$ the scheme $X_i$ contains the set $\underline x$; \\
4) for each $i=0,1,...,r-1$ the number $e_i$ divides $e_{i+1}$.

Let $Z$  be a closed subset in $X$ with $dim Z\leq r-1$. Then one can choose
$f_0,...,f_r\in S_{\mathrm{hom}}$ such that additionally
$X_{1,...,r}\cap Z=\emptyset$
and
$X_{0,2,...,r}\cap Z=\emptyset$
\end{thm}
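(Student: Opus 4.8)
The plan is to realise the required subschemes as the successive members of a flag cut out by an iterated sequence of hypersurface sections, and then to read off each numbered condition as a constraint that is either a \emph{global} smoothness requirement (handled by Poonen's finite-field Bertini theorem) or a \emph{finite, local} incidence requirement (passing through, or avoiding, finitely many closed points). In the notation of \ref{not:main}, the target is that $X_{2,\dots,r}=X_2\cap\dots\cap X_r$ be a smooth curve through $\underline x$, that $X_{1,\dots,r}=X_1\cap X_{2,\dots,r}$ be a smooth finite set of closed points avoiding both $\underline x$ and $Z$, and that $X_{0,\dots,r}=X_0\cap X_{1,\dots,r}=\emptyset$. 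I would fix the degrees in advance of the shape $e_i=2^i d$ for a single large $d$, so that the divisibility $e_0\mid e_1\mid\dots\mid e_r$ of condition (4) holds automatically, all degrees grow together, and $f_0,f_1$ (the polynomials that must \emph{avoid} $\underline x$) carry the smallest degrees while $f_2,\dots,f_r$ (which must \emph{contain} $\underline x$) carry the largest.

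First I would build the curve. Starting from the smooth projective $X$ of dimension $r$, choose $f_r$ of degree $e_r$ so that $X_r=H_{f_r}\cap X$ is smooth of dimension $r-1$ and contains $\underline x$; since $\underline x$ is a reduced $0$-dimensional subscheme and $\dim X_r=r-1\ge\dim\underline x$, this is exactly the situation covered by the ``smooth hypersurface sections containing a given subscheme'' theorem \cite{Poo2}, which furnishes a positive density of such $f_r$ (containment forces $f_r$ to vanish on $\underline x$, while smoothness at the points of $\underline x$ is the built-in transversality). Iterating on the smooth quasi-projective varieties $X_r\supset X_{r-1,r}\supset\dots$ produces $f_{r-1},\dots,f_2$ cutting out a flag whose members are smooth of the expected dimensions $r-1,r-2,\dots,1$ and each of which contains $\underline x$; this gives the $X_{2,\dots,r}$-part of (1) and the containment condition for $i>1$. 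At each of these $r-1$ stages I would \emph{also} require the hypersurface to meet the current trace of $Z$ properly, a further positive-density condition; since $\dim(X\cap Z)=\dim Z\le r-1$, after the $r-1$ cuts the trace drops to dimension $\le 0$, so $W:=C\cap Z$ is a \emph{finite} set of closed points, where $C:=X_{2,\dots,r}$.

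Next I would cut the curve. Applying the ordinary finite-field Bertini theorem \cite{Poo1} to the smooth curve $C$, choose $f_1$ of degree $e_1$ so that $X_{1,\dots,r}=H_{f_1}\cap C$ is smooth of dimension $0$, i.e. a reduced finite set of closed points; this supplies the remaining half of (1). The remaining requirements on $f_1$ are all \emph{avoidance} conditions at finitely many closed points and hence impose only finitely many nonvanishing (open, positive-density) local conditions compatible with the Bertini density: I require $f_1(x)\ne 0$ for each $x\in\underline x$, so that $X_1$ avoids $\underline x$, and $f_1(w)\ne 0$ for each $w\in W$, whence $X_{1,\dots,r}\cap Z=X_1\cap(C\cap Z)=\emptyset$. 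Finally choose $f_0$ of degree $e_0$ nonvanishing on the three finite sets $\underline x$, $W$ and $X_{1,\dots,r}$: the first gives the avoidance of $\underline x$ by $X_0$, the second gives $X_{0,2,\dots,r}\cap Z=X_0\cap W=\emptyset$, and the third gives $X_{0,\dots,r}=X_0\cap X_{1,\dots,r}=\emptyset$, which is condition (0).

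The degree bookkeeping together with finite-field existence is where the real work sits, and I expect it to be the main obstacle. Over a finite field one cannot invoke a generic hyperplane, so every ``positive density'' claim above must come from Poonen's closed-point sieve \cite{Poo1,Poo2}; the delicate point is to run the sieve \emph{simultaneously} for the whole tuple $(f_0,\dots,f_r)$, combining the global flag-smoothness with the prescribed transverse containment of $\underline x$ and the finitely many avoidance conditions, while letting the degrees grow in the fixed ratio $e_i=2^i d$. I would phrase this as a single multi-polynomial refinement of \cite{Poo1,Poo2} whose admissible locus has density converging to a positive product of local factors as $d\to\infty$, so that for $d$ large a simultaneously admissible tuple exists; fixing the ratios $2^i$ in advance both secures the divisibility (4) and dissolves the apparent circularity between ``degree large enough for Bertini on the current flag member'' and ``degree dividing $e_{i+1}$'', since all Poonen thresholds then depend only on the fixed data $X,\underline x,Z$. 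The transversality keeping the flag smooth at the imposed points of $\underline x$ is precisely the compatibility hypothesis under which \cite{Poo2} remains valid, so no additional obstruction arises there.
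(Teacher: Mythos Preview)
The paper does not give a proof of this theorem at all: it merely states the result and says ``Using \cite{Poo1} and \cite{Poo2} one can prove the following result''. Your proposal is exactly the kind of argument this sentence is gesturing at, and it is correct in outline; there is nothing further in the paper to compare it against.

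A couple of remarks on the details. The ``smooth hypersurface section containing a prescribed finite subscheme'' input you attribute to \cite{Poo2} is already available in \cite{Poo1} as Poonen's Bertini with Taylor conditions: imposing that $f_i$ vanish at each $x\in\underline x$ while having a nonzero differential in $T_xX$ is a finite set of local conditions of positive density, and combining this with the global smoothness sieve gives the transverse containment you want. The reference \cite{Poo2} (Charles--Poonen) is an irreducibility Bertini; it is not needed for the statement as written, which only asks that $X_{2,\dots,r}$ be smooth of dimension~$1$, not geometrically irreducible. Your identification of the one genuine technical point --- that an iterative application of Poonen's theorem makes each degree threshold depend on the previously constructed flag member --- is correct, and your proposed fix (run the closed-point sieve once for the full tuple $(f_0,\dots,f_r)$ with degrees in a fixed ratio, so that the density factorises as a product of local terms depending only on $X$, $\underline x$, $Z$) is the standard and correct way to dissolve it.
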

\begin{notation}\label{notn:blowups_fin}
Let $X$, $r\geq 1$, $\underline x\subset X$, a closed subset $Z\subset X$,
homogeneous polynomials $f_0,f_1,...,f_r$ of degrees $e_0,e_1,...,e_r$ respectively
be as in Theorem \ref{Main3}. \\
For all $i,j=0,...,r$ with $j>i$ put $d_{i,j}=e_j/e_i$.\\
Write $\mathcal O(e)$ for $\mathcal O_{\mathbb P^n}(e)|_X$ (here $e\in \mathbb Z$). \\
Write $s_i\in \Gamma(X,\mathcal O(e_i))$ for $f_i|_X$.\\
Let $m\geq 0$, then write $\mathbb A^m$ for $\mathbb A^m_{\mathbb F}$. \\
Put $\dot q=(s_2/s^{d_{1,2}}_1,...,s_r/s^{d_{1,r}}_1): \dot X\to \mathbb A^{r-1}$;\\
For a Zariski open $S$ in $\mathbb A^{r-1}$ put $\dot X_S=\dot q^{-1}(S)$ and $\dot Z_S=Z\cap \dot X_S$.\\
Let $\mathbb P^{r,w}_{\mathbb F}$ be the weighted projective space
with homogeneous coordinates $[t_0:t_1:...:t_m]$ of weights $1,d_{0,1},...,d_{0,m}$ respectively.
Write $\mathbb P^{r,w}$ for $\mathbb P^{r,w}_{\mathbb F}$. \\
Let $\mathbb P^{r-1,w}_{\mathbb F}$ be the weighted projective space
with homogeneous coordinates $[x_1:...:x_m]$ of weights $1,d_{1,2},...,d_{1,m}$ respectively.
Write $\mathbb P^{r-1,w}$ for $\mathbb P^{r-1,w}_{\mathbb F}$. \\
Put $X_{s_i}=X-X_i$, $\mathbb P^{r,w}_{t_i}=\mathbb P^{r,w}-\{t_i=0\}$ and
$\mathbb P^{r-1,w}_{x_i}=\mathbb P^{r-1,w}-\{x_i=0\}$.\\
Write $\dot X$ for $X_{s_1}$ and $\dot {\mathbb P}^{r,w}$ for $\mathbb P^{r,w}_{t_1}$.\\
Note that $\mathbb P^{r-1,w}_{x_1}$ is the affine space $\mathbb A^{r-1}$.\\
The identification is given by sending
$[x_1:x_2:...:x_r]$ to $(x_2/x^{d_{1,2}}_1,...,x_r/x^{d_{1,r}}_1)$.\\
Note that $\mathbb P^{r,w}_{t_0}$ is the affine space $\mathbb A^{r}$.\\
The identification is given by sending
$[t_0:t_1:...:t_r]$ to $(t_1/t^{d_{0,1}}_0,...,t_r/t^{d_{0,r}}_0)$.
\end{notation}

\begin{prop}\label{weighted}
Under the Notation \ref{notn:blowups_fin}, 
the morphism
\begin{equation}\label{w}
\pi=[s_0:s_1:...:s_r]: X\to \mathbb P^{r,w}
\end{equation}
is well-defined and finite.
\end{prop}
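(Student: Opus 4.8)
The plan is to build $\pi$ chart-by-chart, check that it is proper, and then deduce finiteness from the fact that $\pi$ pulls back a suitable line bundle on $\mathbb P^{r,w}$ to an ample line bundle on $X$.

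\emph{Well-definedness.} First I would record that the divisibility chain $e_0\mid e_1\mid\cdots\mid e_r$ (property 4) of Theorem \ref{Main3}) gives $e_i=e_0\,d_{0,i}$, so that the weight $d_{0,i}$ of the coordinate $t_i$ equals $e_i/e_0$ and $\mathcal O(e_i)=\mathcal O(e_0)^{\otimes d_{0,i}}$. Thus each $s_i=f_i|_X$ is a section of the $d_{0,i}$-th power of $\mathcal O(e_0)$, exactly matching the weight of $t_i$. I would then use the affine charts $\mathbb P^{r,w}_{t_i}$, which cover $\mathbb P^{r,w}$ since $t_0,\dots,t_r$ have no common zero. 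Over $\mathbb P^{r,w}_{t_i}$ the prescription is to send each degree-zero monomial $t^a/t_i^{b}$ (so that $\sum_j a_j d_{0,j}=b\,d_{0,i}$) to $s^a/s_i^{b}$; the divisibility makes $\sum_j a_j e_j-b\,e_i=e_0\bigl(\sum_j a_j d_{0,j}-b\,d_{0,i}\bigr)=0$, so $s^a/s_i^{b}$ is a genuine regular function on $X_{s_i}=X-X_i$. These local ring homomorphisms agree on overlaps, and since $X_{0,1,\dots,r}=X_0\cap\cdots\cap X_r=\emptyset$ (property 0) of Theorem \ref{Main3}) the opens $X_{s_i}$ cover $X$; gluing yields a morphism $\pi\colon X\to\mathbb P^{r,w}$.

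\emph{Pullback of a line bundle.} Since $X$ is projective, hence proper, over $\mathbb F$ and $\mathbb P^{r,w}$ is separated over $\mathbb F$, the morphism $\pi$ is proper. Now I would pick an integer $N$ divisible by all the weights, e.g. $N=d_{0,r}$ (the chain forces $d_{0,i}\mid d_{0,r}$); then $\mathcal O_{\mathbb P^{r,w}}(N)$ is an honest line bundle, trivialized on $\mathbb P^{r,w}_{t_i}$ by $t_i^{N/d_{0,i}}$. The key computation is that $\pi^*\mathcal O_{\mathbb P^{r,w}}(N)\cong\mathcal O(e_0N)=\mathcal O(e_r)$: the trivialization $t_i^{N/d_{0,i}}$ pulls back to $s_i^{N/d_{0,i}}$, a nowhere-vanishing section of $\mathcal O(e_i\cdot N/d_{0,i})=\mathcal O(e_0N)$ on $X_{s_i}$, and these local generators carry the same transition functions as those of $\mathcal O(e_0N)$.

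\emph{Finiteness.} Finally I would invoke the criterion that a proper morphism which pulls some line bundle back to an ample line bundle is finite. Here $\mathcal O(e_r)=\mathcal O_{\mathbb P^n}(e_r)|_X$ is ample, because $\mathcal O_{\mathbb P^n}(1)$ is ample and $X$ is a closed subscheme of $\mathbb P^n$; by the previous paragraph it equals $\pi^*\mathcal O_{\mathbb P^{r,w}}(N)$. For each point $y$ of $\mathbb P^{r,w}$, the restriction of this ample bundle to the proper fibre $X_y$ is trivial, hence ample, which forces $\dim X_y=0$; thus $\pi$ is quasi-finite, and a proper quasi-finite morphism is finite. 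I expect the only genuine subtlety to be the weighted-projective-space point that $\mathcal O(1)$ need not be invertible, which is exactly why one passes to $\mathcal O(N)$ with $N$ a common multiple of the weights; the two geometric inputs doing the essential work are $X_{0,\dots,r}=\emptyset$ (for well-definedness and for the covering) and the divisibility $e_0\mid e_i$ (which lets $\mathcal O(e_0)$ serve as the tautological bundle).
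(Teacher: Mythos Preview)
Your proof is correct, but it takes a noticeably longer route than the paper's. The paper argues in one line: since $\pi^{-1}(\mathbb P^{r,w}_{t_i})=X_{s_i}$ is affine (as the complement of a hypersurface section in a projective scheme) and the charts $\mathbb P^{r,w}_{t_i}$ cover $\mathbb P^{r,w}$, the morphism $\pi$ is affine; on the other hand $X$ is projective over $\mathbb F$ and $\mathbb P^{r,w}$ is separated, so $\pi$ is also projective; and an affine projective morphism is finite. Your argument instead establishes properness and then proves quasi-finiteness by computing that $\pi^*\mathcal O_{\mathbb P^{r,w}}(N)\cong\mathcal O(e_0N)$ is ample, so that its restriction to each fibre is simultaneously trivial and ample, forcing zero-dimensional fibres. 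This line-bundle computation is perfectly valid and has the merit of making the projective geometry explicit (and of isolating exactly where the divisibility $e_0\mid e_i$ enters), but for the bare statement of finiteness it is overkill: the observation $\pi^{-1}(\mathbb P^{r,w}_{t_i})=X_{s_i}$ already gives affineness directly, bypassing any need to analyse fibres or line bundles.
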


\begin{proof}
One has $X=\cup^r_{i=0}X_{s_i}$, $\mathbb P^{r,w}=\cup^r_{i=0}\mathbb P^{r,w}_{t_i}$ and
$\pi^{-1}(\mathbb P^{r,w}_{t_i})=X_{s_i}$.
Since each $X_{s_i}$ is affine, the morphism $\pi$ is affine.
At the same time $\pi$ is projective. Thus, $\pi$ is finite.
\end{proof}

\begin{prop}\label{weighted2}
Under the hypotheses of Theorem \ref{Main3} and Notation \ref{notn:blowups} let $M\subset X$ be a closed subscheme
such that $X_{1,...,r}\cap M=\emptyset$. Then the morphism $[s_1:...:s_r]: M\to \mathbb P^{r-1,w}$
is finite.
Particularly, the morphism
$[s_1:...:s_r]: Z\to \mathbb P^{r-1,w}$
is finite.
\end{prop}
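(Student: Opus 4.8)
The plan is to prove finiteness by exactly the mechanism used in Proposition \ref{weighted}: exhibit $\varphi := [s_1:\dots:s_r]|_M : M \to \P^{r-1,w}$ as a morphism that is at once affine and proper, and conclude that it is finite. First, though, one must see that $\varphi$ is a well-defined morphism. Since $s_i=f_i|_X$ is a section of $\mathcal O(e_i)$ and $e_i=d_{1,i}\,e_1$, the tuple $(s_1,\dots,s_r)$ determines a morphism into the weighted projective space with weights $1,d_{1,2},\dots,d_{1,r}$ on the open locus where the $s_i$ do not vanish simultaneously, namely on $X-X_{1,\dots,r}$; concretely, over the chart $X_{s_1}=X-X_1$ it is given by $(s_2/s_1^{d_{1,2}},\dots,s_r/s_1^{d_{1,r}})$, matching the identification $\P^{r-1,w}_{x_1}=\mathbb A^{r-1}$ fixed in the notation. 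The hypothesis $X_{1,\dots,r}\cap M=\emptyset$ guarantees $M\subset X-X_{1,\dots,r}$, so $\varphi$ is defined on all of $M$.

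Next I would cover the target by its standard affine charts, $\P^{r-1,w}=\bigcup_{i=1}^r \P^{r-1,w}_{x_i}$, and compute preimages. Since the $i$-th homogeneous coordinate of $\varphi$ vanishes precisely along $X_i$, we get $\varphi^{-1}(\P^{r-1,w}_{x_i})=M\cap X_{s_i}=M-X_i$. Each $X_{s_i}=X-X_i=X\cap(\mathbb P^n-H_{f_i})$ is closed in the affine scheme $\mathbb P^n-H_{f_i}$ (the complement of a hypersurface in projective space), hence affine; consequently its closed subscheme $M\cap X_{s_i}$ is affine as well. Thus $\varphi$ is affine over each member of an open cover of $\P^{r-1,w}$, and so $\varphi$ is an affine morphism.

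It then remains to check properness. The scheme $M$ is closed in $X$, which is projective over $\mathbb F$, so $M$ is proper over $\mathbb F$, while $\P^{r-1,w}$ is separated over $\mathbb F$; a morphism from an $\mathbb F$-proper scheme to an $\mathbb F$-separated scheme is proper, so $\varphi$ is proper. An affine proper morphism is finite, which yields the main assertion. For the final claim one simply takes $M=Z$: by Theorem \ref{Main3} the polynomials $f_0,\dots,f_r$ may be chosen so that $X_{1,\dots,r}\cap Z=\emptyset$, so $Z$ satisfies the hypothesis on $M$ and $[s_1:\dots:s_r]:Z\to\P^{r-1,w}$ is finite. The only genuinely delicate point, which I would treat with care rather than wave through, is the well-definedness of $\varphi$ into the weighted projective space together with the identity $\varphi^{-1}(\P^{r-1,w}_{x_i})=M\cap X_{s_i}$; once the chart description from the notation is invoked this becomes routine, and everything else is the affine-plus-proper argument transcribed from Proposition \ref{weighted}.
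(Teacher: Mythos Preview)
Your proof is correct and follows exactly the paper's approach: the paper's entire proof reads ``It is affine and projective,'' and you have simply spelled out why the morphism is affine (via the standard chart cover, just as in Proposition~\ref{weighted}) and why it is proper/projective (since $M$ is closed in the projective scheme $X$). Your additional care about well-definedness and the application to $Z$ is accurate and matches the intended argument.
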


\begin{proof}
It is affine and projective.
\end{proof}

The following partial result will be proven at the end of in Section \ref{blowups}.
\begin{thm}\label{thm_Artin_2_fin}
Under the hypotheses of Theorem \ref{Main3}, Notation \ref{not:main} and \ref{notn:blowups_fin} consider the affine variety
$\dot X$ and the morphism
$\dot q: \dot X\to \mathbb A^{r-1}$.
For an open neighborhood $S\subset \mathbb A^{r-1}$ of the origin put $\dot X_S=\dot q^{-1}(S)$ and write $\dot q_S$ for
$\dot q|_{\dot X_S}: \dot X_S\to S$. There exists a Zariski open neighborhood $S\subset \mathbb A^{r-1}$ of the origin
such that  
\[
    \text{$\dot q_S: \dot X_S\to S$ is a weak elementary $\dot Z_S$-fibration.}
\]

\end{thm}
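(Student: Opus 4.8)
The plan is to compactify the affine map $\dot q$ into a smooth projective curve fibration by blowing up the zero-dimensional base locus of the weighted linear system $[s_1:\dots:s_r]$, and then to restrict to a small neighbourhood $S$ of the origin over which the fibres stay smooth. First I would form the compactification. The rational map $[s_1:\dots:s_r]\colon X\dashrightarrow \mathbb{P}^{r-1,w}$ is undefined precisely along $X_{1,\dots,r}$, which by Theorem \ref{Main3} is $\mathbb{F}$-smooth of dimension $0$; let $\tau\colon \hat X\to X$ be the blow-up of $X$ along $X_{1,\dots,r}$. Since the centre is smooth and $X$ is smooth and projective, $\hat X$ is smooth and projective, the rational map resolves to a morphism $\hat q\colon \hat X\to \mathbb{P}^{r-1,w}$ with exceptional divisor $E=\tau^{-1}(X_{1,\dots,r})$, and $\tau$ is an isomorphism over $\dot X=X_{s_1}$ because the centre lies in $X_1=\{s_1=0\}$. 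Working over the smooth chart $\mathbb{A}^{r-1}=\mathbb{P}^{r-1,w}_{x_1}$ and writing $\hat X_S=\hat q^{-1}(S)$, I identify $\dot X_S=\hat X_S\cap \dot X$ and set $\mathbb{W}_S:=\hat X_S-\dot X_S$. Since $[s_1:\dots:s_r]$ carries $X_1$ into the locus $\{x_1=0\}$, disjoint from every $S\subset \mathbb{A}^{r-1}$, the strict transform of $X_1$ misses $\hat X_S$, so $\mathbb{W}_S=E\cap \hat X_S$. This provides the candidate diagram $\dot X_S\xrightarrow{j}\hat X_S\xleftarrow{i}\mathbb{W}_S$ over $S$ of Definition \ref{DefnElemFib}.

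Next I would check condition (iv) and the $Z$-finiteness. The degree-zero function $w:=s_0^{d_{0,1}}/s_1$ is regular on $\dot X$, and over the chart $\mathbb{P}^{r,w}_{t_1}$ the pair $\pi':=(w,\dot q)\colon \dot X\to \mathbb{A}^1\times \mathbb{A}^{r-1}=\mathbb{A}^r$ is the composite of $\pi=[s_0:\dots:s_r]$ with the finite passage from that chart to its ring of invariants; as $\pi$ is finite by Proposition \ref{weighted}, so is $\pi'$, and $\pi'$ is surjective, its image being a closed subset of dimension $r$ in the irreducible $\mathbb{A}^r$. Restricting over $S$ gives a finite surjective $\pi'|_{\dot X_S}\colon \dot X_S\to \mathbb{A}^1_S$ with $\dot q_S=\mathrm{pr}_S\circ \pi'|_{\dot X_S}$; because $\dot X_S$ is smooth (hence Cohen--Macaulay) and $\mathbb{A}^1_S$ is regular of the same dimension, miracle flatness makes it flat, giving (iv). For the refinement, Theorem \ref{Main3} provides $X_{1,\dots,r}\cap Z=\emptyset$, so Proposition \ref{weighted2} makes $[s_1:\dots:s_r]\colon Z\to \mathbb{P}^{r-1,w}$ finite; its restriction shows $\dot q_S|_{\dot Z_S}\colon \dot Z_S\to S$ is finite, i.e. $\dot q_S$ is a weak elementary $\dot Z_S$-fibration.

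The heart of the proof — and the only point at which the finiteness of $\mathbb{F}$ really matters — is to secure smoothness of the fibres, which I would reduce to the single fibre over the origin. With $0=[1:0:\dots:0]$, the fibre $\hat q^{-1}(0)$ is the strict transform of $\{s_2=\dots=s_r=0\}=X_{2,\dots,r}$. By Theorem \ref{Main3} the curve $X_{2,\dots,r}$ is $\mathbb{F}$-smooth of dimension $1$, and the centre $X_{1,\dots,r}=X_1\cap X_{2,\dots,r}$ consists of finitely many smooth points lying on it, so the strict transform is isomorphic to $X_{2,\dots,r}$, again smooth of dimension one, and meets $E$ transversally in the reduced finite set $\mathbb{W}_0$. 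Hence $\hat q^{-1}(0)$ is smooth of dimension one and lies in the open locus $V\subset \hat q^{-1}(\mathbb{A}^{r-1})$ on which $\hat q$ is smooth of relative dimension one. As the restriction $\hat q\colon \hat q^{-1}(\mathbb{A}^{r-1})\to \mathbb{A}^{r-1}$ is proper, the image of its complement $\hat q^{-1}(\mathbb{A}^{r-1})-V$ is closed and avoids $0$; I would take $S$ to be an open neighbourhood of the origin inside that complement. Over such $S$, $\hat q_S$ is smooth projective with one-dimensional fibres, giving (ii), while $\mathbb{W}_S=E\cap \hat X_S$ is an $S$-finite relative effective Cartier divisor in the smooth curve fibration $\hat X_S/S$, hence flat, and reduced over $0$, hence finite étale after shrinking $S$ once more; its fibres are non-empty because $X_{1,\dots,r}=X_1\cap X_{2,\dots,r}\neq \emptyset$, giving (iii). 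Condition (i) is then automatic, since the $S$-finite $\mathbb{W}_S$ meets each one-dimensional fibre of $\hat q_S$ in a zero-dimensional set, so $j$ is a fibrewise-dense open immersion with $\dot X_S=\hat X_S-\mathbb{W}_S$.

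I expect the decisive obstacle to be the construction of the smooth fibre over the origin over a finite field: ordinary generic smoothness is unavailable, and it is precisely the Bertini-type Theorem \ref{Main3} (built on Poonen's results) that forces $X_{2,\dots,r}$ and $X_{1,\dots,r}$ to be smooth of the right dimensions while respecting the point set $\underline x$ and the divisibilities $e_i\mid e_{i+1}$ that make the weighted projections morphisms. Once that smooth fibre is available, all of (i)--(iv) follow from openness of the smooth locus, properness of $\hat q$, and miracle flatness, as above; intersecting the finitely many shrinkings of $S$ around the origin yields the required neighbourhood.
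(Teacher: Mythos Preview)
Your overall strategy matches the paper's: compactify $\dot q$ by a blow-up at the base locus $X_{1,\dots,r}$, show the fibre over the origin is the smooth curve $X_{2,\dots,r}$, then shrink $S$. Your verification of condition~(iv) via $\pi|_{\dot X}:\dot X\to \mathbb{P}^{r,w}_{t_1}\cong\mathbb A^{r}$ and miracle flatness, and of the $Z$-finiteness via Proposition~\ref{weighted2}, are both fine and essentially what the paper does.

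There is, however, a genuine gap in your compactification step. You take $\hat X$ to be the \emph{ordinary} blow-up of $X$ along $X_{1,\dots,r}$ and then assert that the weighted rational map $[s_1:\dots:s_r]\colon X\dashrightarrow\mathbb P^{r-1,w}$ resolves on it. This is false once the degrees $e_1,\dots,e_r$ are not all equal (and Theorem~\ref{Main3} only guarantees $e_i\mid e_{i+1}$). In the first chart of the ordinary blow-up one has $s_j=s_1u_j$, so in $\mathbb P^{r-1,w}$ the image would be $[1:u_2/s_1^{d_{1,2}-1}:\dots]$, which is undefined along the exceptional divisor whenever some $d_{1,j}>1$. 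Thus the ordinary blow-up does not produce your morphism $\hat q$, and your appeal to ``centre smooth, ambient smooth, hence blow-up smooth'' is addressing the wrong object.

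The paper fixes this by taking $\hat X$ to be the \emph{weighted} blow-up, i.e.\ the closed subscheme of $X\times\mathbb P^{r-1,w}$ cut out by $s_jx_i^{d_{i,j}}=s_i^{d_{i,j}}x_j$; then $\hat q$ is just the second projection. The price is that smoothness of $\hat X_{x_1}$ is no longer automatic, and the paper supplies it by factoring through the auxiliary weighted blow-up $\hat{\mathbb P}^{r,w}$: one shows $\hat{\mathbb P}^{r,w}_{x_1}\cong\mathbb P^1\times\mathbb A^{r-1}$ explicitly (Lemma~\ref{phi_psi}), and that $\hat\pi_{x_1}:\hat X_{x_1}\to\hat{\mathbb P}^{r,w}_{x_1}$ is finite flat, \'etale over a neighbourhood of $E_0$ because $\pi^{-1}(\mathbf 0)=X_{1,\dots,r}$ is smooth of dimension~$0$ (Lemma~\ref{flat}, Proposition~\ref{smooth2}). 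Once that is in place, your fibre argument and the rest go through as you wrote; but the smoothness of the compactification is the step that actually needs work, not a one-line citation.
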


\section{Proof of the finite field case of Theorem \ref{Very_General}}\label{blowups}
The main aim of this section is to prove Theorem \ref{thm_Artin_2_fin}. We follow in this section Notation used in Section
\ref{sect:f_field_case}.
\begin{constr}
Define $\hat X$ as a closed subscheme of $X\times \mathbb P^{r-1,w}$
given by equations $s_jx^{d_{i,j}}_i=s^{d_{i,j}}_ix_j$ ($j>i$).
We regard $\hat X$ as a weighted blowup of $X$ at the subscheme $\underline w$.

If $X=\mathbb P^{r,w}$ is the weighted projective space as in Proposition \ref{weighted}
then $\hat {\mathbb P}^{r,w}$ is
a closed subscheme of $\mathbb P^{r,w}\times \mathbb P^{r-1,w}$
given by equations $t_jx^{d_{i,j}}_i=t^{d_{i,j}}_ix_j$.
We regard $\hat {\mathbb P}^{r,w}$ as a weighted blowup of $\mathbb P^{r,w}$ at the point $\mathbf 0:=[1:0:...:0]$.
\end{constr}
Projections $X\times \mathbb P^{r-1,w}\to \mathbb P^{r-1,w}$ and $\mathbb P^{r,w}\times \mathbb P^{r-1,w}\to \mathbb P^{r-1,w}$
induces morphisms $\hat q: \hat X\to \mathbb P^{r-1,w}$ and $\hat p: \hat {\mathbb P}^{r,w}\to \mathbb P^{r-1,w}$
allowing to consider the schemes $\hat X$ and $\hat {\mathbb P}^{r,w}$ as $\mathbb P^{r-1,w}$-schemes.
The morphism $\pi\times id: X\times \mathbb P^{r-1,w}\to \mathbb P^{r,w}\times \mathbb P^{r-1,w}$
induces a morphism $$\hat \pi: \hat X\to \hat {\mathbb P}^{r,w}.$$
This is a morphism of the $\mathbb P^{r-1,w}$-schemes. Put $\hat X_{x_1}=q^{-1}(\mathbb P^{r-1,w}_{x_1})$
and $\hat {\mathbb P}^{r,w}_{x_1}=p^{-1}(\mathbb P^{r-1,w}_{x_1})$.
Recall that $\mathbb P^{r-1,w}_{x_1}$ is the affine space $\mathbb A^{r-1}$.

Consider the weighted $\mathbb P^{1,w}$ with weighted homogeneous coordinates $[t_0:t_1]$ of weights $1,e_1/e_0$ respectively.
Put $d_1=e_1/e_0$. It is known that $\mathbb P^{1,w}$ is isomorphic to  $\mathbb P^{1}$ with homogeneous coordinates
$[t^{d_1}_0:t_1]$. Particularly, $\mathbb P^{1,w}$ is smooth. Consider a morphism
$$\phi: \hat {\mathbb P}^{r,w}_{x_1}\to \mathbb P^{1,w}\times \mathbb A^{r-1}$$
given by $([t_0:...:t_m],[x_1:...:x_m])\mapsto ([t_0:t_1],(x_2/x^{d_{1,2}}_1,..., x_m/x^{d_{1,m}}_1))$. Consider a morphism
$$\psi: \mathbb P^{1,w}\times \mathbb A^{r-1}\to \hat {\mathbb P}^{r,w}_{x_1}$$
given by $([t_0:t_1],(y_2,..., y_m))\mapsto ([t_0:t_1:t^{d_{1,2}}_1y_2:...:t^{d_{1,m}}_1y_m],[1:y_2:...:y_m])$.
\begin{lem}\label{phi_psi}
The morphisms $\phi$ and $\psi$ are well-defined and they are mutually inverse isomorphisms.
Moreover, they are isomorphisms of the $\mathbb A^{r-1}$-schemes.
\end{lem}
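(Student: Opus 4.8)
The plan is to work entirely on the chart $x_1\neq 0$ and to lean on the multiplicativity $d_{i,j}\,d_{j,k}=d_{i,k}$ of the exponents $d_{i,j}=e_j/e_i$, which are positive integers because of property (4) of Theorem \ref{Main3}. First I would record a normalized description of the source. On $\hat{\mathbb P}^{r,w}_{x_1}$ one uses the $\mathbb G_m$-action on the $x$-coordinates to set $x_1=1$; then the defining equations $t_jx_i^{d_{i,j}}=t_i^{d_{i,j}}x_j$ with $i=1$ read simply $t_j=t_1^{d_{1,j}}x_j$ for $j\geq 2$, while the remaining equations (those with $2\leq i<j$) follow automatically: substituting $t_j=t_1^{d_{1,j}}x_j$ and $t_i=t_1^{d_{1,i}}x_i$, both sides equal $t_1^{d_{1,j}}x_i^{d_{i,j}}x_j$ precisely by the identity $d_{1,i}\,d_{i,j}=d_{1,j}$. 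Thus on this chart the blowup is cut out exactly by $t_j=t_1^{d_{1,j}}x_j$, and a point is determined by $[t_0:t_1]$ together with the affine coordinates $y_j=x_j/x_1^{d_{1,j}}$; note $t_0$ enters none of the equations.

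Second, I would verify that $\phi$ is well-defined. Its second component is nothing but the structure morphism to $\mathbb P^{r-1,w}_{x_1}=\mathbb A^{r-1}$, hence is fine. For the first component $[t_0:t_1]$ one checks weight compatibility — the $t$-scaling sends $(t_0,t_1)$ to $(\mu t_0,\mu^{d_{0,1}}t_1)$, which is exactly the scaling defining $\mathbb P^{1,w}$ since $d_1=e_1/e_0=d_{0,1}$ — and non-degeneracy: if $t_1=0$ then $t_j=t_1^{d_{1,j}}x_j=0$ for all $j\geq 2$, forcing $t_0\neq 0$, so $(t_0,t_1)\neq(0,0)$ on the chart and $[t_0:t_1]$ is a genuine point of $\mathbb P^{1,w}$.

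Third, and this is where the real content lies, I would check that $\psi$ is well-defined, i.e. that its value lands in $\hat{\mathbb P}^{r,w}_{x_1}$ and is independent of the representative of $[t_0:t_1]$. Landing in the blowup means verifying $t_jx_i^{d_{i,j}}=t_i^{d_{i,j}}x_j$ for the tuple $t_j=t_1^{d_{1,j}}y_j$, $x_1=1$, $x_j=y_j$; the case $i=1$ is immediate and the case $2\leq i<j$ again reduces to $d_{1,i}\,d_{i,j}=d_{1,j}$. Respecting the $\mathbb P^{1,w}$-scaling uses the complementary identity $d_{0,1}\,d_{1,j}=d_{0,j}$: replacing $(t_0,t_1)$ by $(\mu t_0,\mu^{d_{0,1}}t_1)$ multiplies the $j$-th coordinate $t_1^{d_{1,j}}y_j$ by $\mu^{d_{0,1}d_{1,j}}=\mu^{d_{0,j}}$, i.e. by exactly the weight of that coordinate in $\mathbb P^{r,w}$, so the image point is unchanged. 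Non-degeneracy holds since $(t_0,t_1)\neq(0,0)$, and $x_1=1\neq 0$ places the image in the chart.

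Finally, the two composites are computed directly: $\phi\circ\psi$ returns $([t_0:t_1],(y_2,\dots,y_m))$ because $x_j/x_1^{d_{1,j}}=y_j$, and $\psi\circ\phi$ reproduces the original point precisely because $t_j=t_1^{d_{1,j}}x_j$ holds on the source — this is the only non-formal step, and it is exactly the normalized equation recorded at the start. Compatibility over $\mathbb A^{r-1}$ is then immediate, the second component of $\phi$ being the structure morphism by construction and the affine factor of $\psi$'s target evaluating back to $(y_2,\dots,y_m)$. I expect the main obstacle to be the bookkeeping of weights in showing that $\psi$ genuinely maps into the blowup and respects the weighted scaling; once the two multiplicativity identities for the $d_{i,j}$ are in hand, every remaining verification is a routine substitution.
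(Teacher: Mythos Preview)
Your proof is correct. The paper states this lemma without any proof, treating it as an elementary verification; you have supplied precisely the coordinate computations that justify it. Your key observations --- that on the chart $x_1=1$ the equations with $i=1$ already generate the defining ideal (using $d_{1,i}\,d_{i,j}=d_{1,j}$), that $t_0$ appears in none of the blowup equations so $(t_0,t_1)\neq(0,0)$, and that well-definedness of $\psi$ under the $\mathbb P^{1,w}$-scaling reduces to $d_{0,1}\,d_{1,j}=d_{0,j}$ --- are exactly the points one would need to check, and your treatment of them is accurate.
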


\begin{cor}\label{smooth}
The scheme $\hat {\mathbb P}^{r,w}_{x_1}$ is smooth.\\
The morphism $\hat p: \hat {\mathbb P}^{r,w}_{x_1}\to \hat {\mathbb P}^{r-1,w}_{x_1}=\mathbb A^{r-1}$ is smooth projective with
$\mathbb P^{1,w}\cong \mathbb P^{1}$ as a fiber.
\end{cor}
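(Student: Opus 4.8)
The plan is to deduce the Corollary as a formal consequence of Lemma \ref{phi_psi}. By that Lemma the maps $\phi$ and $\psi$ are mutually inverse isomorphisms of $\mathbb{A}^{r-1}$-schemes, so they identify $\hat{\mathbb{P}}^{r,w}_{x_1}$ with the product $\mathbb{P}^{1,w}\times \mathbb{A}^{r-1}$; moreover, because these are isomorphisms \emph{of $\mathbb{A}^{r-1}$-schemes}, the structure morphism $\hat p$ is transported to the projection $pr:\mathbb{P}^{1,w}\times \mathbb{A}^{r-1}\to \mathbb{A}^{r-1}$. Thus the whole statement reduces to properties of this projection.

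First I would recall the fact already noted before Lemma \ref{phi_psi}: the weighted projective line $\mathbb{P}^{1,w}$, with weights $1,d_1$ on $[t_0:t_1]$, is isomorphic to the ordinary $\mathbb{P}^1$ via $[t_0:t_1]\mapsto [t_0^{d_1}:t_1]$; in particular $\mathbb{P}^{1,w}$ is smooth and projective over $\mathbb{F}$. Since $\mathbb{A}^{r-1}$ is smooth over $\mathbb{F}$ and a product of two smooth $\mathbb{F}$-schemes is smooth, the product $\mathbb{P}^{1,w}\times \mathbb{A}^{r-1}$ is smooth over $\mathbb{F}$. Transporting this along the isomorphism of Lemma \ref{phi_psi} gives the smoothness of $\hat{\mathbb{P}}^{r,w}_{x_1}$, which is the first assertion.

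For the second assertion I would observe that $pr:\mathbb{P}^{1,w}\times \mathbb{A}^{r-1}\to \mathbb{A}^{r-1}$ is the base change of the structure morphism $\mathbb{P}^{1,w}\to \Spec \mathbb{F}$ along $\mathbb{A}^{r-1}\to \Spec \mathbb{F}$. As smoothness and projectivity are both stable under arbitrary base change, and $\mathbb{P}^{1,w}\to \Spec \mathbb{F}$ is smooth and projective (being $\mathbb{P}^1$ over $\mathbb{F}$), the projection $pr$ is smooth and projective, with every fibre isomorphic to $\mathbb{P}^{1,w}\cong \mathbb{P}^1$. Since $\hat p$ is identified with $pr$ by Lemma \ref{phi_psi}, the same conclusions hold for $\hat p$, which completes the proof.

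I do not expect any genuine obstacle here: all the real content sits in Lemma \ref{phi_psi}, where one must verify that $\phi$ and $\psi$ are well-defined on the weighted blowup and are mutually inverse over $\mathbb{A}^{r-1}$. Given that Lemma, the only point demanding a line of care is the compatibility of $\hat p$ with the projection $pr$, and this is precisely the statement that $\phi$ and $\psi$ are morphisms of $\mathbb{A}^{r-1}$-schemes; everything else is the standard stability of smoothness and projectivity under products and base change.
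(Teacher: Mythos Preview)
Your proposal is correct and follows exactly the intended approach: the paper states this as an immediate corollary of Lemma~\ref{phi_psi} with no further proof, and you have supplied precisely the routine details---transporting the structure morphism $\hat p$ to the projection $\mathbb{P}^{1,w}\times\mathbb{A}^{r-1}\to\mathbb{A}^{r-1}$ via the $\mathbb{A}^{r-1}$-isomorphism, and invoking the stability of smoothness and projectivity under base change together with $\mathbb{P}^{1,w}\cong\mathbb{P}^1$.
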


\begin{notation}\label{tau_and_sigma}
Projections $X\times \mathbb P^{r-1,w}\to X$ and $\mathbb P^{r,w}\times \mathbb P^{r-1,w}\to \mathbb P^{r,w}$
induces morphisms $\tau: \hat X\to X$ and $\sigma: \hat {\mathbb P}^{r,w}\to \mathbb P^{r,w}$ respectively.
Put $E_w=\tau^{-1}(\underline w)$, $E_0=\sigma^{-1}(\mathbf 0)$.
\end{notation}

\begin{rem}
Note that $\sigma \circ \hat \pi=\pi \circ \tau$.
Clearly, the morphisms
$\tau: \hat X-E_w\to X-\underline w$ and $\sigma: \hat {\mathbb P}^{r,w}-E_0\to \mathbb P^{r,w}-\{0\}$ are isomorphisms.

Note that $X=(X-\underline w)\cup X_{s_0}$ and $\mathbb P^{r,w}=(\mathbb P^{r,w}-\mathbf 0)\cup \mathbb P^{r,w}_{t_0}$.
\end{rem}

\begin{prop}\label{X_s_1}
The following are true \\
1) the morphisms
$\tau: \hat X_{x_1}-(E_w)_{x_1}\to \dot X$ and $\sigma: \hat {\mathbb P}^{r,w}_{x_1}-(E_0)_{x_1}\to \dot {\mathbb P}^{r,w}$ are isomorphisms; \\
2) the schemes $\hat X_{x_1}-(E_w)_{x_1}$, $\hat {\mathbb P}^{r,w}_{x_1}-(E_0)_{x_1}$, $\dot X$, and $\dot {\mathbb P}^{r,w}$ are smooth.
\end{prop}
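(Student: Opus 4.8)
The plan is to deduce both statements from the two isomorphisms recorded just above the statement, namely $\tau\colon \hat X - E_w \to X-\underline w$ and $\sigma\colon \hat{\mathbb P}^{r,w} - E_0 \to \mathbb P^{r,w} - \{\mathbf 0\}$, together with Corollary \ref{smooth} and the $\mathbb F$-smoothness of $X$. The entire content lies in part (1); part (2) then follows formally.

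For (1), I would first record the relevant disjointness. Since $\underline w$ is the locus $s_1=\cdots=s_r=0$, it is contained in $X_1=\{s_1=0\}$, whence $\dot X = X_{s_1} = X - X_1 \subset X - \underline w$ and $\dot X \cap \underline w = \emptyset$; likewise $\mathbf 0=[1:0:\cdots:0]$ lies in $\{t_1=0\}$, so $\dot{\mathbb P}^{r,w} = \mathbb P^{r,w}_{t_1} \subset \mathbb P^{r,w} - \{\mathbf 0\}$ and $\dot{\mathbb P}^{r,w} \cap \{\mathbf 0\}=\emptyset$. Consequently $\tau^{-1}(\dot X)$ is disjoint from $E_w$ and $\sigma^{-1}(\dot{\mathbb P}^{r,w})$ is disjoint from $E_0$, so the two isomorphisms recalled above restrict to isomorphisms $\tau^{-1}(\dot X)\xrightarrow{\cong}\dot X$ and $\sigma^{-1}(\dot{\mathbb P}^{r,w})\xrightarrow{\cong}\dot{\mathbb P}^{r,w}$.

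The key step is then to identify $\tau^{-1}(\dot X)$ with $\hat X_{x_1}-(E_w)_{x_1}$, and similarly on the $\sigma$ side. Over $X-\underline w$ the map $\tau$ is an isomorphism, so $\hat X - E_w$ is the graph of the weighted morphism $p\mapsto [s_1(p):\cdots:s_r(p)]$ (this is compatible with the defining relations $s_jx_i^{d_{i,j}}=s_i^{d_{i,j}}x_j$). Hence on $\hat X - E_w$ the fibre coordinate $x_1$ vanishes exactly where $s_1$ does, giving $\hat X_{x_1}-(E_w)_{x_1}=\tau^{-1}(\{s_1\neq 0\})=\tau^{-1}(\dot X)$. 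The same argument applied to the relations $t_jx_i^{d_{i,j}}=t_i^{d_{i,j}}x_j$ yields $\hat{\mathbb P}^{r,w}_{x_1}-(E_0)_{x_1}=\sigma^{-1}(\{t_1\neq 0\})=\sigma^{-1}(\dot{\mathbb P}^{r,w})$. Combining with the previous paragraph proves (1). For (2): $\dot X = X - X_1$ is open in the $\mathbb F$-smooth $X$, hence smooth, and then $\hat X_{x_1}-(E_w)_{x_1}\cong\dot X$ is smooth by (1); on the other side $\hat{\mathbb P}^{r,w}_{x_1}$ is smooth by Corollary \ref{smooth}, so its open subscheme $\hat{\mathbb P}^{r,w}_{x_1}-(E_0)_{x_1}$ is smooth, and finally $\dot{\mathbb P}^{r,w}\cong\hat{\mathbb P}^{r,w}_{x_1}-(E_0)_{x_1}$ is smooth by (1).

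The main obstacle — indeed the only non-formal point — is the identification $\hat X_{x_1}-(E_w)_{x_1}=\tau^{-1}(\dot X)$, that is, checking that away from the exceptional locus the fibre coordinate $x_1$ and the section $s_1$ have the same vanishing locus. This amounts to recognizing $\tau$ over $X-\underline w$ as the inverse of the graph of $[s_1:\cdots:s_r]$, which is precisely what the isomorphism $\tau\colon \hat X - E_w \xrightarrow{\cong} X-\underline w$ records; everything else is a matter of bookkeeping with open subschemes.
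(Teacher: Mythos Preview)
Your proof is correct and follows essentially the same approach as the paper: the paper dismisses (1) as ``clear'' (relying on the same isomorphisms $\tau\colon \hat X-E_w\xrightarrow{\cong} X-\underline w$ and $\sigma\colon \hat{\mathbb P}^{r,w}-E_0\xrightarrow{\cong}\mathbb P^{r,w}-\{\mathbf 0\}$ that you unpack in detail), and for (2) it argues exactly as you do---$\dot X$ and $\hat X_{x_1}-(E_w)_{x_1}$ are smooth because $X$ is, while $\hat{\mathbb P}^{r,w}_{x_1}-(E_0)_{x_1}$ and $\dot{\mathbb P}^{r,w}$ are smooth by Corollary~\ref{smooth}. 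Your explicit verification that $x_1$ and $s_1$ have the same vanishing locus away from the exceptional divisor is a welcome elaboration of what the paper leaves implicit.
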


\begin{proof}
The first assertion is clear. Prove the second one.
The scheme $X$ is smooth. Hence so are the schemes $\dot X$ and $\hat X_{x_1}-(E_w)_{x_1}$.
By Corollary \ref{smooth} the scheme $\hat {\mathbb P}^{r,w}_{x_1}$ is smooth. Hence so are the schemes
$\hat {\mathbb P}^{r,w}_{x_1}-(E_0)_{x_1}$ and $\dot {\mathbb P}^{r,w}$.
\end{proof}

\begin{lem}\label{flat_pi_0}
Let $X_{s_0}\xrightarrow{\pi_0} \mathbb P^{r,w}_{t_0}=\mathbb A^r$ be the base change of the morphism
$\pi: X\to \mathbb P^{r,w}$. Then $\pi_0$ is finite flat.
\end{lem}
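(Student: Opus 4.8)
The plan is to split the statement into its two assertions, deduce finiteness formally from Proposition~\ref{weighted} by base change, and obtain flatness from the miracle-flatness form of the local criterion. The decisive observation is that although the full weighted projective space $\mathbb P^{r,w}$ is singular, the chart $\mathbb P^{r,w}_{t_0}=\mathbb A^r$ is \emph{regular}; this is precisely what makes the criterion applicable and is the reason the lemma is stated on this chart rather than globally over $\mathbb P^{r,w}$.

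First I would dispose of finiteness. By Proposition~\ref{weighted} the morphism $\pi:X\to\mathbb P^{r,w}$ is finite, and $\pi_0$ is by definition its base change along the open immersion $\mathbb A^r=\mathbb P^{r,w}_{t_0}\hookrightarrow\mathbb P^{r,w}$, with $X_{s_0}=\pi^{-1}(\mathbb P^{r,w}_{t_0})$. Since finiteness is stable under base change, $\pi_0$ is finite; in particular every fibre of $\pi_0$ is $0$-dimensional.

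Next I would prove flatness using the miracle-flatness criterion (the local criterion of flatness for a morphism from a Cohen--Macaulay scheme to a regular scheme, as in Matsumura or EGA~IV). The source $X_{s_0}$ is an open subscheme of the smooth equidimensional $\mathbb F$-scheme $X$ of dimension $r$, hence is itself smooth, so Cohen--Macaulay, and equidimensional of dimension $r$. The target $\mathbb A^r=\mathbb P^{r,w}_{t_0}$ is regular of dimension $r$: this is exactly where working in the $t_0\neq 0$ chart is used, the weight-$1$ coordinate $t_0$ trivialising the weighting and realising the chart as affine space (Notation~\ref{notn:blowups_fin}). The criterion then reduces flatness of $\pi_0$ at a point $x$ to the dimension equality $\dim\mathcal O_{X_{s_0},x}=\dim\mathcal O_{\mathbb A^r,\pi_0(x)}$, the fibre contribution vanishing because $\pi_0$ has $0$-dimensional fibres.

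Finally I would check this equality at every $x\in X_{s_0}$ with image $y=\pi_0(x)$. Writing $\dim\mathcal O_{\mathbb A^r,y}=r-\dim\overline{\{y\}}$ and $\dim\mathcal O_{X_{s_0},x}=r-\dim\overline{\{x\}}$, which holds because both schemes are equidimensional of dimension $r$ and catenary (being of finite type over $\mathbb F$), the required identity becomes $\dim\overline{\{x\}}=\dim\overline{\{y\}}$. This in turn follows from finiteness of $\pi_0$: being closed, $\pi_0$ restricts to a finite surjection $\overline{\{x\}}\to\overline{\{y\}}$, which preserves dimension. I expect the only genuine obstacle to be the bookkeeping of the equidimensionality hypothesis at non-closed points; one must use that $X_{s_0}$ is pure of dimension $r$ and exploit the closedness of the finite morphism $\pi_0$, rather than arguing fibre-by-fibre, after which the verification is formal.
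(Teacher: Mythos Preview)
Your proposal is correct and follows essentially the same route as the paper: finiteness by base change from Proposition~\ref{weighted}, then flatness via miracle flatness using that both $X_{s_0}$ and $\mathbb A^r$ are smooth of the same dimension. The paper compresses your dimension verification into a single sentence (``Since $X_{s_0}$ and $\mathbb A^m$ are smooth it follows that $\pi_0$ is flat''), but the underlying argument is identical.
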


\begin{proof}
By Proposition \ref{weighted} the morphism $\pi$ is finite surjective.
Thus the morphism $\pi_0$
is finite surjective as the base change of $\pi$.
Since $X_{s_0}$ and $\mathbb A^m$ are smooth it follows that
$\pi_0$ is flat. And it is also finite.
\end{proof}

\begin{lem}\label{flat}
The morphism $\hat \pi_{x_1}: \hat X_{x_1}\to \hat {\mathbb P}^{r,w}_{x_1}$ is finite flat.
\end{lem}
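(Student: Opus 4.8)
The plan is to exhibit $\hat\pi_{x_1}$ as a base change of the finite morphism $\pi$ of Proposition \ref{weighted}, and then to verify flatness locally over $\hat{\mathbb P}^{r,w}_{x_1}$: over the chart carrying the exceptional locus I would invoke Lemma \ref{flat_pi_0} directly, and over the complementary chart a miracle-flatness argument of the kind already used in the proof of Lemma \ref{flat_pi_0}.

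First I would record the structural identity $\hat X=X\times_{\mathbb P^{r,w}}\hat{\mathbb P}^{r,w}$, with $\hat\pi$ the projection. Indeed, pulling back along $\pi=[s_0:\dots:s_r]$ the bihomogeneous equations $t_jx_i^{d_{i,j}}=t_i^{d_{i,j}}x_j$ that cut out $\hat{\mathbb P}^{r,w}\subset\mathbb P^{r,w}\times\mathbb P^{r-1,w}$ amounts to the substitution $t_k\mapsto s_k$, and reproduces exactly the equations $s_jx_i^{d_{i,j}}=s_i^{d_{i,j}}x_j$ defining $\hat X\subset X\times\mathbb P^{r-1,w}$. Hence $\hat\pi$ is the base change of $\pi$ along $\sigma$; localizing over the chart $\mathbb P^{r-1,w}_{x_1}$ shows $\hat\pi_{x_1}$ is the base change of $\pi$ along $\sigma|_{\hat{\mathbb P}^{r,w}_{x_1}}\colon\hat{\mathbb P}^{r,w}_{x_1}\to\mathbb P^{r,w}$. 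Finiteness of $\hat\pi_{x_1}$ is then immediate, since $\pi$ is finite by Proposition \ref{weighted} and finiteness is stable under base change.

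For flatness, which is local on the target, I would use the isomorphism $\hat{\mathbb P}^{r,w}_{x_1}\cong\mathbb P^{1,w}\times\mathbb A^{r-1}$ of Lemma \ref{phi_psi} and cover $\mathbb P^{1,w}$ by its two standard affine charts $\{t_0\neq 0\}$ and $\{t_1\neq 0\}$. On the chart $\{t_0\neq 0\}$ the blowdown $\sigma$ lands in $\mathbb P^{r,w}_{t_0}=\mathbb A^r$; using $\pi^{-1}(\mathbb P^{r,w}_{t_0})=X_{s_0}$, the restriction of $\hat\pi_{x_1}$ there is the base change of $\pi_0\colon X_{s_0}\to\mathbb A^r$ along $\sigma$, which is finite flat by Lemma \ref{flat_pi_0}; hence $\hat\pi_{x_1}$ is flat over this chart. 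On the chart $\{t_1\neq 0\}$, which is precisely the complement $\hat{\mathbb P}^{r,w}_{x_1}-(E_0)_{x_1}$, the map $\sigma$ is an isomorphism onto $\dot{\mathbb P}^{r,w}=\mathbb P^{r,w}_{t_1}$ by Proposition \ref{X_s_1}, and under it $\hat\pi_{x_1}$ is identified with $\pi|_{X_{s_1}}\colon\dot X\to\dot{\mathbb P}^{r,w}$. Since $\dot X$ and $\dot{\mathbb P}^{r,w}$ are smooth of dimension $r$ by Proposition \ref{X_s_1} and $\pi|_{X_{s_1}}$ is finite and surjective, it is flat by the same miracle-flatness argument (finite surjective between smooth schemes of equal dimension) used in Lemma \ref{flat_pi_0}. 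As the two charts cover $\hat{\mathbb P}^{r,w}_{x_1}$, flatness holds globally, and together with finiteness this yields that $\hat\pi_{x_1}$ is finite flat.

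The one point that genuinely requires input beyond formal stability of finiteness and flatness under base change is flatness over the exceptional locus $(E_0)_{x_1}$, i.e.\ over the chart $\{t_0\neq 0\}$. A priori $\pi$ need not be flat near the blown-up point $\mathbf 0$ of the possibly singular weighted projective space $\mathbb P^{r,w}$, so one cannot simply base-change a global flatness of $\pi$. This is exactly the content supplied by Lemma \ref{flat_pi_0} after reducing along $\pi^{-1}(\mathbb P^{r,w}_{t_0})=X_{s_0}$; the decisive observation is that the $t_0$-chart of the weighted blowup sits over $\mathbb P^{r,w}_{t_0}=\mathbb A^r$, which is what makes the base-change argument applicable there.
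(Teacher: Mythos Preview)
Your proof is correct and follows essentially the same approach as the paper. Your two charts $\{t_0\neq 0\}$ and $\{t_1\neq 0\}$ of $\mathbb P^{1,w}\times\mathbb A^{r-1}$ are precisely the paper's open pieces $(\hat{\mathbb P}^{r,w}_{t_0})_{x_1}$ and $(\hat{\mathbb P}^{r,w}-E_0)_{x_1}$, and on each piece you and the paper run the identical argument (base change of Lemma~\ref{flat_pi_0} on the first; miracle flatness via Proposition~\ref{X_s_1} on the second). Your explicit identification $\hat X=X\times_{\mathbb P^{r,w}}\hat{\mathbb P}^{r,w}$, giving finiteness of $\hat\pi_{x_1}$ at once, is a small clarification the paper leaves implicit.
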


\begin{proof}
One has $\hat X_{x_1}=(\hat X-E_w)_{x_1}\cup (\hat X_{s_0})_{x_1}$,
$\hat {\mathbb P}^{r,w}_{x_1}=(\hat {\mathbb P}^{r,w}-E_0)_{x_1}\cup (\hat {\mathbb P}^{r,w}_{t_0})_{x_1}$.
Thus, it is sufficient to check that morphisms
$(\hat X-E_w)_{x_1}\xrightarrow{\hat \pi_{x_1}} (\hat {\mathbb P}^{r,w}-E_0)_{x_1}$
and
$(\hat X_{s_0})_{x_1}\xrightarrow{\hat \pi_{x_1}} (\hat {\mathbb P}^{r,w}_{t_0})_{x_1}$
are finite flat.

The morphism $\pi_0$ is finite flat by Lemma \ref{flat_pi_0}.
The morphism
$\hat X_{s_0}\xrightarrow{\hat \pi_0} \hat {\mathbb P}^{r,w}_{t_0}$
is a base change of $\pi_0$. Thus it is finite flat too.
Eventually the morphism
$\hat \pi_{x_1}: (\hat X_{s_0})_{x_1}\to (\hat {\mathbb P}^{r,w}_{t_0})_{x_1}$
is a base change of the morphism $\hat \pi_0$. Thus it is finite flat as well.

The morphism $X-w=\hat X-E_w\to \hat {\mathbb P}^{r,w}-E_0=\mathbb P^{r,w}-[1:0:...:0]$ is finite surjective.
Thus the morphism
$\hat X_{x_1}-(E_w)_{x_1}\to \hat {\mathbb P}^{r,w}_{x_1}-(E_0)_{x_1}$
is finite surjective. By the second item of Proposition \ref{X_s_1}
the source and the target are smooth schemes.
Thus, the morphism
$\hat X_{x_1}-(E_w)_{x_1}\to \hat {\mathbb P}^{r,w}_{x_1}-(E_0)_{x_1}$ is flat and finite.
Hence $\hat \pi_{x_1}$ is finite flat.
\end{proof}

\begin{lem}\label{hat_q_fl_proj}
The morphism $\hat q: \hat X_{x_1}\to \hat {\mathbb P}^{r-1,w}_{x_1}=\mathbb A^{r-1}$ is flat projective.
\end{lem}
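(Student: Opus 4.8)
The plan is to restrict everything over the chart $\{x_1 \neq 0\}$ and read off both properties from a factorization of $\hat q$ through morphisms already analyzed. Since $\hat \pi$ is a morphism of $\mathbb P^{r-1,w}$-schemes, one has $\hat q = \hat p \circ \hat \pi$ as morphisms to $\mathbb P^{r-1,w}$. Restricting over the open $\mathbb P^{r-1,w}_{x_1} = \mathbb A^{r-1}$ yields $\hat q|_{\hat X_{x_1}} = \hat p \circ \hat \pi_{x_1}$, realized by
\[
\hat X_{x_1} \xrightarrow{\hat \pi_{x_1}} \hat{\mathbb P}^{r,w}_{x_1} \xrightarrow{\hat p} \mathbb A^{r-1}.
\]

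To prove flatness I would invoke that a composite of flat morphisms is flat. By Lemma \ref{flat} the morphism $\hat \pi_{x_1}$ is finite flat, and by Corollary \ref{smooth} the morphism $\hat p$ is smooth, hence flat. Therefore the composite $\hat q|_{\hat X_{x_1}}$ is flat. Note that this factorization argument bypasses any need to analyze the local structure of the (blown-up, possibly singular) source directly, which is exactly why one avoids resorting to a miracle-flatness criterion.

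For projectivity I would argue directly from the construction of $\hat X$. By definition $\hat X$ is a closed subscheme of $X \times \mathbb P^{r-1,w}$, and since $X$ is $\mathbb F$-projective the projection $X \times \mathbb P^{r-1,w} \to \mathbb P^{r-1,w}$ is projective; hence $\hat q: \hat X \to \mathbb P^{r-1,w}$ is projective, being the composite of a closed immersion and a projective morphism. Passing to the open subscheme $\mathbb P^{r-1,w}_{x_1} = \mathbb A^{r-1}$ of the base preserves projectivity, so $\hat q: \hat X_{x_1} \to \mathbb A^{r-1}$ is projective.

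The real content of this lemma has already been isolated in Lemma \ref{flat} and Corollary \ref{smooth}, so what remains is essentially bookkeeping. The only point demanding a moment's care is the identity $\hat q = \hat p \circ \hat \pi$, that is, that these two a priori distinct structure maps to $\mathbb P^{r-1,w}$ coincide; this is immediate once one recalls that $\hat \pi$ is by construction a morphism of $\mathbb P^{r-1,w}$-schemes. I therefore expect no genuine obstacle in this step.
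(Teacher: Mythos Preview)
Your proof is correct and matches the paper's approach: the paper's one-line argument is exactly ``$\hat q=\hat p\circ \hat \pi_{x_1}$, apply Lemma~\ref{flat} and Corollary~\ref{smooth}.'' The only cosmetic difference is that for projectivity the paper reads it off the same factorization (finite composed with smooth projective), whereas you instead use the closed embedding $\hat X\hookrightarrow X\times\mathbb P^{r-1,w}$; both arguments are immediate.
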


\begin{proof}
We know that $\hat q=\hat p\circ \hat \pi_{x_1}$. Apply now Lemma \ref{flat} and Corollary \ref{smooth}.
\end{proof}

\begin{prop}\label{smooth2}
The scheme $\hat X_{x_1}$ is smooth.
\end{prop}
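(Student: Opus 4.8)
The plan is to reduce to a local computation at the exceptional locus and then apply the Jacobian criterion. By item 2) of Proposition \ref{X_s_1} the open subscheme $\hat X_{x_1}-(E_w)_{x_1}\cong \dot X$ is smooth, so it suffices to prove smoothness of $\hat X_{x_1}$ at the points of the exceptional locus $(E_w)_{x_1}$. I would first make the chart explicit: using the identification $\mathbb P^{r-1,w}_{x_1}=\mathbb A^{r-1}$ with affine coordinates $u_j=x_j/x_1^{d_{1,j}}$ ($j=2,\dots,r$), the scheme $\hat X_{x_1}$ is the closed subscheme of $X\times \mathbb A^{r-1}$ cut out by the equations $s_j=s_1^{d_{1,j}}u_j$ ($j=2,\dots,r$); the remaining defining equations $s_jx_i^{d_{i,j}}=s_i^{d_{i,j}}x_j$ with $2\le i<j$ are automatically satisfied because $d_{1,i}d_{i,j}=d_{1,j}$. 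From these equations any point of $\hat X_{x_1}$ with $s_1=0$ forces $s_2=\dots=s_r=0$ as well, hence lies over the centre $\underline w=X_{1,\dots,r}$ of the blowup.

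Next I would exploit that, by Theorem \ref{Main3}, the scheme-theoretic intersection $X_{1,\dots,r}=\underline w$ is $\mathbb F$-smooth of dimension $0=r-r$ inside the smooth $r$-dimensional $X$. Since $\underline w=V(s_1,\dots,s_r)$ is cut out by exactly $r$ sections and has the expected dimension with smooth (in particular reduced, zero-dimensional) tangent spaces, the sections $s_1,\dots,s_r$ must meet transversally along $\underline w$. Concretely, after trivialising the line bundles $\mathcal O(e_i)$ in a Zariski neighbourhood of a point $w\in\underline w$ and writing $s_i$ for the resulting local functions, this says that $ds_1,\dots,ds_r$ are linearly independent at $w$, so that $(s_1,\dots,s_r)$ is an \'etale coordinate system on $X$ near $w$.

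Finally I would apply the Jacobian criterion to the $r-1$ equations $F_j:=s_j-s_1^{d_{1,j}}u_j$ ($j=2,\dots,r$) inside the smooth ambient scheme $X\times\mathbb A^{r-1}$ of dimension $2r-1$. At a point of $(E_w)_{x_1}$ one has $s_1=0$, so $d(s_1^{d_{1,j}})$ vanishes there when $d_{1,j}\ge 2$ and equals $u_j\,ds_1$ when $d_{1,j}=1$; in either case $dF_j\equiv ds_j\pmod{ds_1}$. Since $ds_1,\dots,ds_r$ are independent, the differentials $dF_2,\dots,dF_r$ are linearly independent at every point of $(E_w)_{x_1}$, so the $F_j$ cut out a smooth complete intersection of codimension $r-1$; hence $\hat X_{x_1}$ is smooth of dimension $r$ along the exceptional locus. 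Combined with the smoothness of the complement this yields the claim.

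I expect the main work to be bookkeeping rather than conceptual. The only genuinely substantive input is the transversality of $s_1,\dots,s_r$ along $\underline w$, and this is delivered directly by the smoothness and dimension count for $X_{1,\dots,r}$ in Theorem \ref{Main3}. The point to watch is that the reduction of the weighted-blowup equations to the transversal system $F_j$ uses in an essential way both the divisibility $e_i\mid e_{i+1}$ (so that $d_{1,i}d_{i,j}=d_{1,j}$) and the fact that we are in the weight-one chart $x_1\ne 0$: in a chart $x_j\ne 0$ with $d_{1,j}>1$ the same construction would instead produce a cyclic quotient singularity, so smoothness is genuinely a feature of this particular chart.
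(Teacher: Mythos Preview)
Your argument is correct, but it follows a different route from the paper's proof. The paper does not compute Jacobians directly on the blowup equations. Instead it exploits the finite morphism $\pi=[s_0:\dots:s_r]:X\to\mathbb P^{r,w}$ and its lift $\hat\pi:\hat X\to\hat{\mathbb P}^{r,w}$: since the fibre $\pi_0^{-1}(0)=\underline w$ is smooth of dimension~$0$ (Theorem~\ref{Main3}), the map $\pi_0:X_{s_0}\to\mathbb A^r$ is \'etale in a neighbourhood $W$ of $\underline w$; base-changing along the blowup, $(\hat\pi_0)_{x_1}:\hat W_{x_1}\to\hat U_{x_1}$ is still \'etale, and the target is smooth because $\hat{\mathbb P}^{r,w}_{x_1}\cong\mathbb P^{1,w}\times\mathbb A^{r-1}$ (Corollary~\ref{smooth}); hence $\hat W_{x_1}$ is smooth, and together with the smoothness of $(\hat X-E_w)_{x_1}$ this covers $\hat X_{x_1}$.

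Both proofs rest on the same substantive input---the $0$-dimensional smoothness of $\underline w$, which is equivalent to your transversality of $s_1,\dots,s_r$---but package it differently. The paper's argument is more modular: it reuses the structural lemmas about $\hat{\mathbb P}^{r,w}_{x_1}$ and the morphism $\hat\pi$ that are needed elsewhere anyway (e.g.\ for Lemma~\ref{flat} and Proposition~\ref{smooth_fibre}). Your approach is more self-contained and makes the role of the $x_1$-chart explicit; your closing remark that other weighted charts would give cyclic quotient singularities is a nice conceptual complement that the paper does not spell out.
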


\begin{proof}
By Lemma \ref{flat_pi_0} the morphism $\pi_0$ is finite flat. The scheme
$\pi^{-1}_0(\{0\})=X_{1,...,m}=\underline w$ is smooth of dimension zero
by Theorem \ref{Main3}. Thus, $\pi_0$ is \'{e}tale over the origin $\{0\}$. So,
we can take a Zariski open $U\subset \mathbb A^m$ containing $\{0\}$
such that for $W=\pi^{-1}_0(U)$ the morphism $\pi_0: W\to U$ is finite \'{e}tale.
Then the morphisms
$\hat \pi_0: \hat W\to \hat U$ and $(\hat \pi_0)_{x_1}: \hat W_{x_1}\to \hat U_{x_1}$
are finite \'{e}tale as base change of $\pi_0$.
Note that $\hat U_{x_1}$ is open in $\hat {\mathbb P}^{r,w}_{x_1}$
and the latter is smooth by Corollary \ref{smooth}. Hence $\hat U_{x_1}$ is smooth
and so is $\hat W_{x_1}$ as an its \'{e}tale cover.

One has $\hat X_{x_1}=(\hat X-E_w)_{x_1}\cup \hat W_{x_1}$. By Proposition \ref{smooth}
$(\hat X-E_w)_{x_1}$ is smooth.
We know already that $\hat W_{x_1}$ is smooth.
Thus, $\hat X_{x_1}$ is smooth.
\end{proof}

Recall that
$\hat p: \hat {\mathbb P}^{r,w}_{x_1}\to \mathbb P^{r-1,w}_{x_1}=\mathbb A^{m-1}$
is smooth projective with the projective line $\mathbb P^1$ as fibres.
For the morphism
$\hat q: \hat X_{x_1}\to \mathbb P^{r-1,w}_{x_1}$
one has $\hat q=\hat \pi_{x_1}\circ \hat p$.

\begin{prop}\label{smooth_fibre}
Let $C={\hat q}^{-1}(\{0\})$. Then \\
1) $C$ is a smooth projective curve; \\
2) $\tau|_C: C\to X$ is a closed embedding; \\
3) $\tau(C)$ coincides with the smooth closed dimension one subscheme $X_{2,...,m}$ in $X$. \\
\end{prop}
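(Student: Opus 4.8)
The plan is to compute the fibre $C=\hat q^{-1}(\{0\})$ completely explicitly in the affine chart $\hat X_{x_1}$ and to identify it, via $\tau$, with the subscheme $X_{2,\dots,r}$ of Theorem \ref{Main3}; all three assertions then drop out at once. First I would record the equations of $\hat X_{x_1}$. Recall that $\mathbb P^{r-1,w}_{x_1}=\mathbb A^{r-1}$ with affine coordinates $y_j=x_j/x_1^{d_{1,j}}$ for $j=2,\dots,r$, so that setting $x_1=1$ gives $x_j=y_j$. Substituting into the defining equations $s_jx_i^{d_{i,j}}=s_i^{d_{i,j}}x_j$ of $\hat X$, the equations with $i=1$ read
\[
s_j=s_1^{d_{1,j}}\,y_j,\qquad j=2,\dots,r.
\]
The equations with $i\geq 2$ are then automatically satisfied: using $d_{1,i}d_{i,j}=(e_i/e_1)(e_j/e_i)=e_j/e_1=d_{1,j}$ one checks $s_jy_i^{d_{i,j}}=s_1^{d_{1,j}}y_jy_i^{d_{i,j}}=s_i^{d_{i,j}}y_j$. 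Hence $\hat X_{x_1}$ is the closed subscheme of $X\times\mathbb A^{r-1}$ cut out by $s_j=s_1^{d_{1,j}}y_j$ ($j=2,\dots,r$), with $\hat q$ the projection onto the $y$-coordinates.

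Next I would identify $C$. The origin of $\mathbb A^{r-1}$ corresponds to $y_2=\dots=y_r=0$, so $C$ is cut out inside $\hat X_{x_1}$ by the extra equations $y_2=\dots=y_r=0$; combined with the relations above these force $s_j=0$ for $j=2,\dots,r$. Thus the projection $\tau$ (forgetting the now-vanishing $y$-coordinates) identifies $C$ scheme-theoretically with the closed subscheme $V(s_2,\dots,s_r)=X_2\cap\dots\cap X_r=X_{2,\dots,r}$ of $X$. This is a closed embedding, proving (2), and its image is precisely $X_{2,\dots,r}$, proving (3). Note $C$ does meet the exceptional set $E_w$ over the centre $\underline w=X_{1,\dots,r}\subset X_{2,\dots,r}$, where the whole $(r-1)$-dimensional fibre $\{w\}\times\mathbb A^{r-1}$ of $\tau$ lies in $\hat X_{x_1}$; but the condition $y_2=\dots=y_r=0$ cuts this down to the single point $(w,0,\dots,0)$, so $\tau|_C$ stays a closed embedding there. (As a sanity check, on the complement $\hat X_{x_1}-(E_w)_{x_1}\cong\dot X$ of Proposition \ref{X_s_1} the map $\hat q$ restricts to $\dot q$, and $\dot q^{-1}(\{0\})=X_{2,\dots,r}\cap\dot X$, consistent with the above.)

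Finally, (1) is immediate from Theorem \ref{Main3}: the subscheme $X_{2,\dots,r}$ is $\mathbb F$-smooth of dimension one, and it is projective as a closed subscheme of the projective scheme $X$; transporting this along the isomorphism $\tau|_C: C\xrightarrow{\ \simeq\ }X_{2,\dots,r}$ shows that $C$ is a smooth projective curve. One could alternatively deduce that $C$ is a projective curve from the factorization $\hat q=\hat p\circ\hat\pi_{x_1}$ together with Corollary \ref{smooth} and Lemma \ref{flat}, since then $C\to\hat p^{-1}(\{0\})\cong\mathbb P^1$ is finite; but the smoothness comes out most cleanly from the explicit identification. The only step demanding genuine care is verifying the redundancy of the higher equations ($i\geq 2$) in the chart $x_1\neq 0$, i.e.\ the divisibility bookkeeping $d_{1,i}d_{i,j}=d_{1,j}$; everything else is formal.
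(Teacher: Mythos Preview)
Your proof is correct and takes a genuinely different, more direct route than the paper's. You compute the fibre $C$ by writing down the equations of $\hat X_{x_1}$ explicitly in the chart $x_1\neq 0$, check that the relations with $i\geq 2$ lie in the ideal generated by those with $i=1$ (the divisibility identity $d_{1,i}d_{i,j}=d_{1,j}$ is exactly what makes this work), and then read off $C=X_{2,\dots,r}\times\{0\}$; all three conclusions follow at once from Theorem~\ref{Main3}. The paper instead argues piecewise: it covers $C$ by $(C-E_{\underline w})\cup(C\cap\hat W)$, where $\hat W$ is the preimage of an \'etale neighbourhood of $\underline w$ constructed in the proof of Proposition~\ref{smooth2}, and handles each piece separately --- smoothness of $C\cap\hat W$ via the finite \'etale morphism $\hat\pi$ to a piece of $\mathbb P^1$, and the closed-embedding claim by comparing $\tau|_C$ with the model embedding $\sigma|_l:l\hookrightarrow\mathbb P^{r,w}$ and invoking base change. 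Your explicit-equations approach is shorter and avoids bringing in $\hat W$ and $\hat{\mathbb P}^{r,w}$ altogether; the paper's approach has the mild advantage that it reuses machinery already set up for Proposition~\ref{smooth2} and Lemma~\ref{flat}, and makes the relation to the model $\hat{\mathbb P}^{r,w}\to\mathbb P^{r-1,w}$ more visible. One small remark: for the specific computation of $C$ you do not even need the full redundancy of the $i\geq 2$ equations on $\hat X_{x_1}$, since once $y_2=\dots=y_r=0$ those equations vanish identically; but your stronger observation that $\hat X_{x_1}$ is already cut out by the $i=1$ equations alone is correct and worth keeping, as it clarifies the geometry of the chart.
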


\begin{proof}
Let $U$ and $W$ be as in the proof of Proposition \ref{smooth2}.
Prove the first assertion.
One has an open cover $C=(C-E_{\underline w})\cup (C\cap \hat W)$.
Note that $\tau: C-E_{\underline w}\to X_{2,...,m}-\underline w$ is an isomorphism.
Since $X_{2,...,m}$ is smooth, thus so is $C-E_{\underline w}$.

The $\hat \pi:\hat W\to \hat U$ is finite \'{e}tale. Hence so is the one
$C\cap \hat W\to (\mathbb P^1\times \{0\})\cap \hat U$. Since $\mathbb P^1$ is smooth,
hence so is $C\cap \hat W$. Thus $C$ is smooth. Since $C$ is closed in $X$ it is projective.

Prove the second assertion.
Put $l:=\hat p^{-1}(\{0\})$. By Corollary \ref{smooth}
$\sigma|_l: l\to \mathbb P^{r,w}$ identifies $l$ with the closed
subscheme $\mathbb P^{r,w}_{2,...,m}:=\{t_2=t_3=...=t_m=0\}$. Particularly, $\sigma|_l: l\to \mathbb P^{r,w}$ is a closed embedding and
$\sigma|_l: l\cap \hat U\to U$ is a closed embedding as well.
Put $\tau_C:=\tau|_C$. One has $X=(X-\underline w)\cup W$.
Thus $C=\tau^{-1}_C(X-\underline w)\cup \sigma^{-1}_C(W)=(C-E_{\underline w})\cup (C\cap \hat W)$.
Since $\tau_C: C-E_{\underline w}\to X-\underline w$ is a closed embedding it remains to check that
$\tau_C: (C\cap \hat W)\to W$ is a closed embedding. But this morphism is a base change
of the closed embedding $\sigma|_l: (l\cap \hat U)\to U$. Thus, the morphism $\tau_C: (C\cap \hat W)\to W$
is a closed embedding indeed.

Prove the third assertion.
The morphism $\tau: \hat X\to X$ is the base change of the one
$\sigma: \hat {\mathbb P}^{r,w}\to \mathbb P^{r,w}$ by means of $\pi$.
The closed embedding $j_C: C\hra \hat X$ is the base change of $j_l: l\hra \hat {\mathbb P}^{r,w}$ by means of $\hat \pi$.
Thus, $\tau_C: C\hra X$ is a base change of $\sigma|_l: l\hra \mathbb P^{r,w}$ by means of $\pi$.
Recall that $\sigma|_l$ identifies $l$ with $\mathbb P^{r,w}_{2,...,m}$. Thus
$\tau_C$ identifies $C$ with the subscheme $\pi^{-1}(\mathbb P^{r,w}_{2,...,m})=X_{2,...,m}$
of the scheme $X$.
%
%
\end{proof}

\begin{cor}\label{sm_proj}
There exists a Zariski open neighborhood $S$ of the origin $0\in \mathbb A^{r-1}$
such that for $\hat X_S:=\hat q^{-1}(S)$ the morphism $\hat q: \hat X_S\to S$ is smooth projective.
\end{cor}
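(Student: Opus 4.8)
The plan is to deduce the smoothness of $\hat q$ near the fibre over the origin from the facts, already at hand, that $\hat q$ is flat and projective while its fibre $C=\hat q^{-1}(\{0\})$ is smooth, and then to use properness to spread this out to an open neighbourhood $S$ of $0$ in the base. The guiding principle is that for a flat, locally finitely presented morphism smoothness is a fibrewise condition, and that the non-smooth locus, being closed and carried to a closed set by the proper map $\hat q$, misses the origin.

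First I would recall that by Lemma \ref{hat_q_fl_proj} the morphism $\hat q:\hat X_{x_1}\to \mathbb A^{r-1}$ is flat and projective; in particular it is proper and locally of finite presentation. For such a morphism over the locally Noetherian base $\mathbb A^{r-1}$ the locus $V\subset \hat X_{x_1}$ of points at which $\hat q$ is smooth is open, since a morphism locally of finite presentation is smooth at a point precisely when it is flat there and its fibre is geometrically regular at that point (the standard fibrewise criterion for smoothness).

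Next I would observe that the entire fibre $C$ lies in $V$. Indeed, $\hat q$ is flat along $C$, and by Proposition \ref{smooth_fibre} the fibre $C$ is a smooth projective curve over $\mathbb F$, hence geometrically regular; applying the fibrewise criterion point by point gives $C\subset V$, so $C$ does not meet the closed complement $\hat X_{x_1}-V$. Since $\hat q$ is proper, the image $\hat q(\hat X_{x_1}-V)$ is closed in $\mathbb A^{r-1}$, and by the previous sentence it does not contain the origin. Hence $S:=\mathbb A^{r-1}-\hat q(\hat X_{x_1}-V)$ is an open neighbourhood of $0$, and by construction $\hat X_S=\hat q^{-1}(S)$ is contained in $V$, so $\hat q|_{\hat X_S}:\hat X_S\to S$ is smooth. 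It is also projective, being the base change of the projective morphism $\hat q$ along the open immersion $S\hookrightarrow \mathbb A^{r-1}$, which yields the desired $S$.

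I do not expect a serious obstacle here: all the substantive input, namely the flatness and projectivity of $\hat q$ (Lemma \ref{hat_q_fl_proj}), the smoothness of the total space $\hat X_{x_1}$ (Proposition \ref{smooth2}), and the smoothness of the special fibre $C$ (Proposition \ref{smooth_fibre}), has already been established. The only points requiring care are invoking the fibrewise smoothness criterion correctly and using properness to pass from an open neighbourhood of $C$ in the total space to an open neighbourhood of $0$ in the base.
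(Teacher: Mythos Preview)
Your proposal is correct and matches the paper's intended argument: the corollary is stated without proof immediately after Proposition~\ref{smooth_fibre}, and the implicit reasoning is exactly the combination of Lemma~\ref{hat_q_fl_proj} (flat projective), Proposition~\ref{smooth_fibre} (smooth fibre over $0$), the fibrewise smoothness criterion, and properness to spread out to an open neighbourhood of the origin. One minor remark: you list Proposition~\ref{smooth2} among the substantive inputs, but your argument as written does not actually use the smoothness of the total space $\hat X_{x_1}$; the fibrewise criterion only needs flatness of $\hat q$ and smoothness of the fibre $C$, both of which you already have.
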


\begin{notation}\label{j}
Write $j$ for the composition $\dot X\xrightarrow{\tau^{-1}} \hat X_{x_1}-(E_w)_{x_1}\hra \hat X_{x_1}$,
where $\tau$ is the isomorphism as in Proposition \ref{X_s_1}. Then $j$ is an open embedding.

Write $\hat Z$ for $\tau^{-1}(Z)\subset \hat X$. Since $Z\cap \underline w=\emptyset$ the morphism
$\tau$ identifies $\hat Z$ with $Z$. Also $j$ identifies $\dot Z:=Z_{s_1}$ with $\hat Z_{x_1}$.

Put $\dot q=\hat q\circ j: \dot X\to \mathbb P^{r-1,w}_{x_1}=\mathbb A^{r-1}$.

Write $i$ for the closed embedding $\mathbb A^{r-1}\times \underline w=(E_{\underline w})_{x_1}\hra \hat X_{x_1}$ and put $q_{\infty}=\hat q\circ i$.
\end{notation}

\begin{lem}\label{Z_finite_A}
The morphism $\dot q|_{\dot Z}: \dot Z\to \mathbb A^{m-1}$ is finite.\\
The morphism $q_{\infty}$ is finite \'{e}tale.
\end{lem}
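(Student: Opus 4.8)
The plan is to treat the two assertions separately, reducing each to a morphism whose finiteness (respectively \'etaleness) is already recorded earlier.

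For the first assertion I would start from Proposition \ref{weighted2}. The last clause of Theorem \ref{Main3} supplies polynomials with $X_{1,\dots,r}\cap Z=\emptyset$, so Proposition \ref{weighted2} applies with $M=Z$ and gives that $[s_1:\dots:s_r]\colon Z\to \mathbb P^{r-1,w}$ is finite. I would then identify $\dot q$ with a restriction of this morphism: under the identification $\mathbb P^{r-1,w}_{x_1}=\mathbb A^{r-1}$ of Notation \ref{notn:blowups_fin}, the map $[s_1:\dots:s_r]$ restricted to $\dot X=X_{s_1}$ is exactly $(s_2/s_1^{d_{1,2}},\dots,s_r/s_1^{d_{1,r}})=\dot q$. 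Since the preimage of $\mathbb P^{r-1,w}_{x_1}$ under $[s_1:\dots:s_r]|_Z$ is the locus where $s_1\neq 0$, that is $Z_{s_1}=\dot Z$, the morphism $\dot q|_{\dot Z}$ is the restriction of a finite morphism to an open preimage. As finiteness is stable under such restriction (it is the base change of the finite morphism along the open immersion $\mathbb A^{r-1}\hookrightarrow \mathbb P^{r-1,w}$), I conclude that $\dot q|_{\dot Z}\colon \dot Z\to \mathbb A^{r-1}$ is finite.

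For the second assertion I would first describe $E_w=\tau^{-1}(\underline w)$ explicitly. On $\underline w=X_{1,\dots,r}$ all of $s_1,\dots,s_r$ vanish, so the defining equations $s_jx_i^{d_{i,j}}=s_i^{d_{i,j}}x_j$ of $\hat X\subset X\times \mathbb P^{r-1,w}$ hold identically over $\underline w$; hence $E_w=\underline w\times \mathbb P^{r-1,w}$, and intersecting with $\hat X_{x_1}$ yields $(E_w)_{x_1}=\underline w\times \mathbb A^{r-1}$, matching the identification fixed in Notation \ref{j}. Under this identification $q_\infty=\hat q\circ i$ is just the projection $\underline w\times \mathbb A^{r-1}\to \mathbb A^{r-1}$, because $\hat q$ is induced by the projection $X\times\mathbb P^{r-1,w}\to\mathbb P^{r-1,w}$. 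It then suffices to observe that $\underline w\to \Spec \mathbb F$ is finite \'etale: by Theorem \ref{Main3} the scheme $\underline w=X_{1,\dots,r}$ is $\mathbb F$-smooth of dimension $0$, hence \'etale over $\mathbb F$, and being a dimension-zero closed subscheme of the projective $X$ it is finite over $\mathbb F$. Thus $q_\infty$, being the base change of $\underline w\to \Spec\mathbb F$ along $\mathbb A^{r-1}\to \Spec \mathbb F$, is finite \'etale; moreover $\underline w\neq\emptyset$ since the hypersurface $X_1$ meets the projective curve $X_{2,\dots,r}$, so all fibres of $q_\infty$ are non-empty, as wanted for condition (iii) of Definition \ref{DefnElemFib}.

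The main obstacle, and really the only point requiring care, is the explicit identification $(E_w)_{x_1}\cong \underline w\times\mathbb A^{r-1}$ together with the claim that $\hat q$ restricts to the projection on it; everything else is formal stability of finiteness and \'etaleness under base change, plus the smoothness and dimension statements already in Theorem \ref{Main3}. Once the blowup is understood fibrewise over its centre $\underline w$, where the defining equations degenerate to $0=0$, both the product structure of $E_w$ and the description of $q_\infty$ as a trivial $\underline w$-bundle over $\mathbb A^{r-1}$ follow, and the lemma is immediate.
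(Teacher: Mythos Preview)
Your proof is correct and follows essentially the same route as the paper: for the first assertion the paper simply notes that $\dot q|_{Z_{s_1}}$ is affine and projective hence finite (which is exactly the content of your appeal to Proposition~\ref{weighted2} plus base change along $\mathbb A^{r-1}\hookrightarrow \mathbb P^{r-1,w}$), while the second assertion the paper leaves implicit from the identification $(E_w)_{x_1}=\mathbb A^{r-1}\times\underline w$ already made in Notation~\ref{j}. Your explicit unpacking of that identification and of why $\underline w$ is finite \'etale over $\mathbb F$ is welcome detail, not a different argument.
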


\begin{proof}
The morphism $q|_{Z_{s_1}}$ is affine and projective. Thus it is finite.
\end{proof}

Consider the following commutative diagram
\begin{equation}
\label{SquareDiagram1}
    \xymatrix{
     \dot X\ar[drr]_{\dot q}\ar[rr]^{j}&&
\hat X_{x_1}\ar[d]_{\hat q}&&\mathbb A^{r-1}\times {\underline w}\ar[ll]_{i}\ar[lld]^{q_{\infty}} &\\
     && \mathbb A^{r-1}  &\\    }
\end{equation}
The morphism $\hat q$ is flat projective. The morphism $q_{\infty}$ is finite \'{e}tale. The morphism
$i$ is a closed embedding, the morphism $j$ is an open embedding identifying $\dot X$ with
$\hat X_{x_1}-i(\mathbb A^{r-1}\times {\underline w})$. The closed set $\underline x$ is in $X_{s_1}$.
If $Z\subset X$ is as in Theorem \ref{Main3}, then $\dot Z\subset X_{s_1}$ and $\dot Z$ is finite over $\mathbb A^{r-1}$
by Lemma \ref{Z_finite_A}.
\begin{thm}\label{w_elem_fib_1}
Let $S\subset \mathbb A^{r}$ be as in Corollary \ref{sm_proj}.
Then under Notation \ref{notn:blowups_fin}
the base change of the diagram \eqref{SquareDiagram1} by means of the open embedding $S\hra \mathbb A^{r-1}$
\begin{equation}
\label{SquareDiagram2}
    \xymatrix{
     \dot X_S\ar[drr]_{\dot q_S}\ar[rr]^{j_S}&&
\hat X_S\ar[d]_{\hat q_S}&&S\times {\underline w}\ar[ll]_{i}\ar[lld]^{\mathrm{pr}_S} &\\
     && S  &\\    }
\end{equation}
is a diagram of the weak elementary fibration $\dot q_S: \dot X_S\to S$ and
$\underline x\subset \dot X_S$. Moreover, if $Z\subset X$ is as in Theorem \ref{Main3}, then
$\dot q: \dot X_S\to S$ is a weak elementary $\dot Z_S$-fibration.
\end{thm}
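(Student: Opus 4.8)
The plan is to verify that the base-changed diagram \eqref{SquareDiagram2} satisfies the four conditions (i)--(iv) of Definition \ref{DefnElemFib}, with $T=\dot X_S$, $\bar T=\hat X_S$, $T_\infty=S\times\underline w$, $\bar q=\hat q_S$ and $q_\infty=\mathrm{pr}_S$, and then to check the supplementary assertions $\underline x\subset\dot X_S$ and (for the ``moreover'' part) the finiteness of $\dot q_S|_{\dot Z_S}$. Since every object in \eqref{SquareDiagram2} is obtained from \eqref{SquareDiagram1} by the flat base change $S\hookrightarrow\mathbb A^{r-1}$, I would first record that finiteness, flatness, \'etaleness, smoothness, projectivity and openness are all stable under base change, so that most of the verification can be carried out over $\mathbb A^{r-1}$ and then transported to $S$.

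Condition (ii) is the reason for passing to $S$: by Corollary \ref{sm_proj} the morphism $\hat q_S\colon\hat X_S\to S$ is smooth projective. To see that its fibres are equidimensional of dimension one, I would combine the flatness of $\hat q\colon\hat X_{x_1}\to\mathbb A^{r-1}$ (Lemma \ref{hat_q_fl_proj}) with the fact that $\hat X_{x_1}$ is irreducible of dimension $r$ (a blowup of the irreducible $r$-fold $X$, smooth by Proposition \ref{smooth2}); the dimension formula for a flat finite-type morphism then forces every nonempty fibre to be of pure dimension $r-(r-1)=1$, in agreement with Proposition \ref{smooth_fibre}, which identifies the fibre over the origin with the smooth curve $X_{2,\dots,r}$. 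Condition (iii) is immediate: $\underline w=X_{1,\dots,r}$ is a nonempty $\mathbb F$-smooth scheme of dimension zero by Theorem \ref{Main3}, so $\mathrm{pr}_S\colon S\times\underline w\to S$ is finite \'etale with nonempty fibres (equivalently, the base change of the finite \'etale $q_\infty$ of Lemma \ref{Z_finite_A}). For condition (i), Notation \ref{j} gives that $j$ is an open embedding identifying $\dot X$ with $\hat X_{x_1}-(E_w)_{x_1}=\hat X_{x_1}-i(\mathbb A^{r-1}\times\underline w)$, whence after base change $\dot X_S=\hat X_S-(S\times\underline w)$; density at each fibre follows because the complement $S\times\underline w$ meets each one-dimensional smooth fibre of $\hat q_S$ in a finite set of closed points, which cannot exhaust a fibre of pure dimension one.

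The heart of the argument is condition (iv), where I must exhibit a finite flat surjection $\pi\colon\dot X_S\to\mathbb A^1_S$ with $\mathrm{pr}_S\circ\pi=\dot q_S$. I would take the finite morphism $\pi=[s_0:\dots:s_r]\colon X\to\mathbb P^{r,w}$ of Proposition \ref{weighted} and restrict it over the affine open $\dot{\mathbb P}^{r,w}=\mathbb P^{r,w}_{t_1}$; since $\pi^{-1}(\mathbb P^{r,w}_{t_1})=X_{s_1}=\dot X$, this yields a finite surjective morphism $\dot X\to\dot{\mathbb P}^{r,w}$. Next I would identify $\dot{\mathbb P}^{r,w}$ with $\mathbb A^1_{\mathbb A^{r-1}}$ compatibly with $\hat p$: by Proposition \ref{X_s_1} the map $\sigma$ identifies $\hat{\mathbb P}^{r,w}_{x_1}-(E_0)_{x_1}$ with $\dot{\mathbb P}^{r,w}$, while Corollary \ref{smooth} and Lemma \ref{phi_psi} identify $\hat{\mathbb P}^{r,w}_{x_1}$ with $\mathbb P^{1}\times\mathbb A^{r-1}$ over $\mathbb A^{r-1}$, the locus $(E_0)_{x_1}$ corresponding to the section $\{[1:0]\}\times\mathbb A^{r-1}$; removing it turns the $\mathbb P^1$-factor into $\mathbb A^1$ and presents $\hat p$ as the projection $\mathbb A^1\times\mathbb A^{r-1}\to\mathbb A^{r-1}$. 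Flatness of $\dot X\to\dot{\mathbb P}^{r,w}$ I would obtain from miracle flatness: both schemes are smooth (Proposition \ref{X_s_1}) of the same dimension $r$ and the morphism is finite. Tracing the identifications shows $\mathrm{pr}\circ(\pi|_{\dot X})=\hat q\circ j=\dot q$, and base changing along $S\hookrightarrow\mathbb A^{r-1}$ gives the required finite flat surjective $\pi_S\colon\dot X_S\to\mathbb A^1_S$ with $\mathrm{pr}_S\circ\pi_S=\dot q_S$.

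Finally, for $\underline x\subset\dot X_S$ I would use Theorem \ref{Main3}: the points of $\underline x$ lie on each $X_i$ with $i\ge2$ but on no point of $X_1$, so $\underline x\subset X_{s_1}=\dot X$ and $\dot q(\underline x)=0\in S$, placing $\underline x$ in $\dot q^{-1}(S)=\dot X_S$. For the ``moreover'' statement it then remains only to note that $\dot q_S|_{\dot Z_S}$ is finite, which is the base change of the finiteness of $\dot q|_{\dot Z}\colon\dot Z\to\mathbb A^{r-1}$ proved in Lemma \ref{Z_finite_A}. I expect condition (iv)---in particular the clean identification of $\dot{\mathbb P}^{r,w}$ with $\mathbb A^1_{\mathbb A^{r-1}}$ carrying $\hat p$ to the projection, together with the miracle-flatness step---to be the only genuinely substantive point; the remaining conditions are base-change bookkeeping on top of the results already established.
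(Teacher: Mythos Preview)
Your proof is correct and essentially follows the route the paper intends, though you spell out far more than the paper does: its entire proof reads ``This follows from Corollary~\ref{sm_proj} and Lemma~\ref{Z_finite_A}.'' In other words, the paper treats conditions (i), (iii), (iv) and the inclusion $\underline x\subset\dot X_S$ as already established in the text preceding the theorem (the paragraph describing diagram~\eqref{SquareDiagram1}), and cites only the two inputs that genuinely require the passage to $S$.

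Your detailed check of condition (iv) --- restricting the finite morphism $\pi=[s_0:\dots:s_r]$ of Proposition~\ref{weighted} over the $t_1$-chart, using Proposition~\ref{X_s_1} and Lemma~\ref{phi_psi} to identify $\dot{\mathbb P}^{r,w}$ with $\mathbb A^1\times\mathbb A^{r-1}$, and then invoking miracle flatness --- is exactly what the paper leaves implicit (it is essentially the content of Lemma~\ref{flat} restricted away from the exceptional locus, together with $\hat\pi^{-1}(E_0)=E_w$). So there is no difference in strategy, only in explicitness; your version would serve as a full justification of the paper's one-line proof.
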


\begin{proof}
This follows from Corollary \ref{sm_proj} and Lemma \ref{Z_finite_A}.
\end{proof}

Clearly, Theorem \ref{w_elem_fib_1} yields the following
\begin{cor}\label{cor_Artin_2}[= Theorem \ref{thm_Artin_2_fin}]
Under the notation and the hypotheses of Theorems \ref{Main3}, \ref{w_elem_fib_1} and Notation \ref{notn:blowups_fin}
consider the affine variety
$\dot X$ and the morphism
$\dot q: \dot X\to \mathbb A^{r-1}$.
Then for the open neighborhood $S\subset \mathbb A^{r-1}$ of the origin as in
Corollary \ref{sm_proj} and for the $\dot Z_S=Z\cap \dot X_S$
the morphism $\dot q_S: \dot X_S\to S$ is a weak elementary $\dot Z_S$-fibration.
\end{cor}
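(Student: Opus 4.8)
The plan is to read the statement off Theorem \ref{w_elem_fib_1}; concretely, I would verify that the base-changed diagram \eqref{SquareDiagram2} satisfies all the requirements of Definition \ref{DefnElemFib} under the identifications $\bar q=\hat q_S$, $T_\infty = S\times \underline w$, $q_\infty=\mathrm{pr}_S$, and $q=\dot q_S$. Since every structural ingredient has already been assembled in the preceding lemmas, the argument reduces to a bookkeeping check of conditions (i)--(iv) together with the finiteness of $\dot q_S|_{\dot Z_S}$.

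First I would dispose of the two purely structural conditions. Condition (ii) is immediate: by Corollary \ref{sm_proj} the morphism $\hat q_S:\hat X_S\to S$ is smooth projective, and by Corollary \ref{smooth} together with Proposition \ref{smooth_fibre} its fibres are equi-dimensional of dimension one, being copies of $\mathbb P^1$. Condition (iii) follows because $\underline w=X_{1,\dots,r}$ is $\mathbb F$-smooth of dimension $0$ by Theorem \ref{Main3}, hence finite \'etale over $\mathbb F$, so $\mathrm{pr}_S:S\times \underline w\to S$ is finite \'etale; this is precisely the content of Lemma \ref{Z_finite_A}.

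Next come condition (i) and the crucial condition (iv). The morphism $j_S$ is an open immersion by base change from Notation \ref{j}, and it identifies $\dot X_S$ with $\hat X_S-(S\times \underline w)$; density in each fibre of $\hat q_S$ holds because the removed locus $S\times \underline w$ is finite over $S$, hence zero-dimensional inside each one-dimensional fibre. For condition (iv) I would exhibit the finite flat surjective $\pi:\dot X_S\to \mathbb A^1_S$. Using the isomorphism $\hat{\mathbb P}^{r,w}_{x_1}\cong \mathbb P^1\times \mathbb A^{r-1}$ of Lemma \ref{phi_psi} and restricting to the affine chart of the $\mathbb P^1$-factor (the complement of $(E_0)_{x_1}$, where $t_1\neq 0$), one obtains $\mathbb A^1\times S=\mathbb A^1_S$; the morphism $\pi$ is then the restriction of $\hat\pi_{x_1}$, which is finite flat by Lemma \ref{flat} and surjective since $\pi:X\to \mathbb P^{r,w}$ is finite surjective by Proposition \ref{weighted}. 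One checks directly that $\mathrm{pr}_S\circ\pi=\dot q_S$, the extra $\mathbb A^1$-coordinate being $s_0^{d_1}/s_1$ with $d_1=e_1/e_0$ (which is regular on $\dot X=X_{s_1}$ since $s_0^{d_1}\in\mathcal O(e_1)$). I expect this to be the main obstacle, as it requires tracking the weighted coordinates through the blowup charts and confirming that the finite flat morphism of Lemma \ref{flat} restricts correctly to the affine chart that carries $\dot X_S$.

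Finally, the $\dot Z_S$-fibration condition asks that $\dot q_S|_{\dot Z_S}:\dot Z_S\to S$ be finite, which is Lemma \ref{Z_finite_A} after the base change along $S\hookrightarrow \mathbb A^{r-1}$ (recall $\dot Z_S=Z\cap \dot X_S$). Assembling conditions (i)--(iv) and this finiteness yields that $\dot q_S:\dot X_S\to S$ is a weak elementary $\dot Z_S$-fibration, and the containment $\underline x\subset \dot X_S$ is inherited from $\underline x\subset X_{s_1}=\dot X$.
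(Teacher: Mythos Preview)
Your argument is correct and follows the same route as the paper, which simply says the corollary is immediate from Theorem~\ref{w_elem_fib_1} (whose one-line proof is ``This follows from Corollary~\ref{sm_proj} and Lemma~\ref{Z_finite_A}''). You have unpacked that one line into an explicit verification of conditions (i)--(iv) of Definition~\ref{DefnElemFib}, which is exactly what the paper leaves implicit. One small slip: the fibres of $\hat q_S$ are not ``copies of $\mathbb P^1$''; they are smooth projective curves that are finite flat covers of $\mathbb P^1$ via $\hat\pi_{x_1}$ (it is $\hat p$, not $\hat q$, that has $\mathbb P^1$-fibres). This does not affect your proof, since all that condition (ii) requires is equi-dimensionality of dimension one, which your fibres certainly have.
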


\section{An infinite field case of Theorem \ref{Very_General}}\label{sect:inf_field_case}
In this Section $k$ is an infinite field, $n\geq 1$. Let $\Pro^n_k$ be the projective space
with homogeneous coordinates $T_0,...,T_n$. Let $r\geq 1$ and $d\geq 1$. Consider
the vector space $S_{k,d}$ of degree $d$ homogeneous polynomials with coefficients in $k$ in variables $T_0,...,T_n$.
For each $i$ write $\bar T_i$ for $T_i|_X$. Then the homogeneous coordinate $k$-algebra $k[X]$ of $X$ is
$k[\bar T_0,...,\bar T_n]$.
\begin{notation}\label{not:main_inf}
Let $k$ be an infinite field, $n\geq 1$ be an integer.\\
Let $\mathbb P^n:=\mathbb P^n_k$ be the projective space over $k$. \\
Let $S_k = k[T_0,...,T_n]$ be the homogeneous coordinate ring of $\mathbb P^n$.\\
let $S_{k,d} \subset S_k$ be the $k$-subspace of homogeneous degree $d$ polynomials,\\
let $S_{k,\mathrm{hom}} = \sqcup^{\infty}_{d=0} S_{k,d}$ (the disjoint union). \\
Let $f\in S_{k,d}$ be a degree $d$ homogeneous polynomial. \\
Let $H_{f}\subset \mathbb P^n$ be the closed subscheme of $\mathbb P^n$ defined by the homogeneous ideal $(f)\subset S_k$.\\
Let $X$ be an $k$-smooth projective equidimensional subscheme of $\mathbb P^n$ of dimension $r$. \\
Let $W\subset S_{k,d}\subset S_{k,\mathrm{hom}}$ be a dimension $r$ vector $k$-subspace.\\
Let ${\bf f}=\{f_1,...,f_r\}$ be a free $k$-basis of $W$.\\
Let $X_i$ be the closed subscheme of $X$ equals $H_{f_i}\cap X$ (the scheme intersection). \\
For any $I\subset \{0,1,...,r\}$ write $X_I$ for the scheme intesection $\cap_{i\in I}X_i$ in $X$.\\
Write $\underline w=X_{1,...,r}$ for the closed subscheme in $X$.\\
Write $\mathcal O(d)$ for $\mathcal O_{\mathbb P^n}(d)|_X$.\\
For the $f_i$ write $s_i\in \Gamma(X,\mathcal O(d))$ for $f_i|_X$.\\
The closed subscheme $\underline w=X_{1,...,r}\subset X$ depends only on $W$.
\end{notation}

Extending the aurguments from~\cite[Exp. XI, Thm. 2.1]{LNM305} we get the following Bertini type result
(see \cite{PSV})
\begin{thm}
\label{GeneralSection_bar k} Let $k$ be an algebraically closed and let
$X\subset \Pro^n_k$ be a smooth closed subscheme of pure
dimension $r\geq 1$.
Let ${\bf x}\in X$ be a finite subset of
closed points.
Under the Notation \ref{not:main_inf}
for each integer $d\gg 0$
for {\bf a generic} dimension $r$ vector subspace $W$ in $S_{k,d}$
and {\bf a generic} free $k$-basis
${\bf f}=\{f_1,\dots,f_{r}\}$ of $W$
the following is true \\
(i) the scheme $\underbar w$ is $k$-smooth of dimension zero and $\underbar w\cap X_0=\emptyset$; \\
(ii) $\pi|_{\bf x}: {\bf x}\to \P^r$ is a closed embedding, where $\pi=[s_0:...:s_r]: X\to \P^r$; \\
(iii) for each $x\in {\bf x}$ one has $s_2(x)\neq 0\neq s_1(x)$; particularly, ${\bf x}\cap \underbar w=\emptyset$ and ${\bf x}\subset X_{s_2}$; \\
(iv) for each $x\in {\bf x}$ the closed subscheme of $X$ defined by equations
$\{s_1(x)s_i=s_i(x)s_1\}$, where $i\in \{2,...,r\}$,
is smooth of dimension one.

Let $Z\subset X$ be a closed subset with $dim Z < r$.
Then one can choose the $W$ and the ${\bf f}$ above
such that the property (i) to (iv) does hold and the following is true \\
(v) $\underbar w\cap Z=\emptyset$; particularly, $Z\subset X-\underbar w$
and the map $q=[s_1:...:s_r]: Z\to \P^{r-1}$ is finite.
\end{thm}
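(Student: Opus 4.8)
The plan is to treat clause (v) as one additional \emph{generic} constraint and to append it to the open conditions already producing (i)--(iv). Since $\underbar w=X_{1,\dots,r}$ depends only on $W$, the requirement $\underbar w\cap Z=\emptyset$ is a condition on the tuple $(f_1,\dots,f_r)$, equivalently on $W\in \mathrm{Gr}(r,S_{k,d})$. First I would show that this condition holds on a nonempty Zariski-open subset of $(S_{k,d})^r$. The first part of the theorem exhibits each of (i)--(iv) as a nonempty open condition on the same irreducible parameter space of frames in $S_{k,d}$; intersecting finitely many nonempty opens of an irreducible variety is again nonempty, so any $(W,\mathbf f)$ in the common intersection realizes (i)--(v) at once. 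The linear independence of $\{f_1,\dots,f_r\}$ is itself an open dense condition, so $W$ is genuinely $r$-dimensional.

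The heart of the matter is the emptiness $\underbar w\cap Z=\emptyset$, which I would get by a dimension count on an incidence variety. Set
\[
\Sigma=\{(z,f_1,\dots,f_r)\in Z\times (S_{k,d})^r : s_1(z)=\dots=s_r(z)=0\},
\]
and consider its two projections. For each $z\in Z\subset X\subset\mathbb P^n$ and every $d\ge 1$ some degree-$d$ monomial is nonzero at $z$, so the evaluation $S_{k,d}\to \mathcal O(d)\otimes\kappa(z)$ is surjective; hence each fibre of the projection $\Sigma\to Z$ is a product of $r$ hyperplanes, a linear subspace of $(S_{k,d})^r$ of codimension exactly $r$. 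Therefore $\dim\Sigma=\dim Z+r\dim S_{k,d}-r\le (r-1)+r\dim S_{k,d}-r<\dim (S_{k,d})^r$, where I use the hypothesis $\dim Z<r$. Consequently the image of the second projection $\Sigma\to (S_{k,d})^r$ is not dense, and the complement of its closure is a nonempty open whose points are exactly the tuples $(f_1,\dots,f_r)$ having no common zero on $Z$, i.e.\ those with $\underbar w\cap Z=\emptyset$.

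Granting $\underbar w\cap Z=\emptyset$, the finiteness of $q=[s_1:\dots:s_r]\colon Z\to \P^{r-1}$ follows exactly as in Proposition~\ref{weighted2}. The sections $s_1,\dots,s_r$ have no common zero on $Z$, so $q$ is a well-defined morphism out of the projective $k$-scheme $Z$ (closed in $X$), hence proper. Covering $\P^{r-1}$ by the standard charts $\P^{r-1}_{x_i}=\{x_i\neq 0\}$, one has $q^{-1}(\P^{r-1}_{x_i})=Z\cap X_{s_i}=Z-X_i$, which is affine: $X_{s_i}=X\cap(\mathbb P^n-H_{f_i})$ is closed in the affine scheme $\mathbb P^n-H_{f_i}$ (the complement of a hypersurface in projective space), hence affine, and $Z\cap X_{s_i}$ is closed in it. Thus $q$ is an affine morphism; being both affine and proper it is finite, which is precisely (v).

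Relative to the first part of the theorem — where achieving (i)--(iv) is the genuinely hard Bertini-smoothness work and forces $d\gg 0$ — clause (v) is comparatively soft, and I expect no serious obstacle. The one point that must be checked with care is the \emph{strict} inequality $\dim\Sigma<\dim(S_{k,d})^r$: it holds exactly because the number $r$ of cutting sections strictly exceeds $\dim Z$ (the hypothesis $\dim Z<r$), whereas if $\dim Z=r$ the image could fill $(S_{k,d})^r$ and no generic avoidance would exist. The remaining point is purely organizational, namely that the avoidance locus is intersected with the nonempty open loci of (i)--(iv) inside \emph{one} irreducible parameter space; this is legitimate since every condition is imposed on $(W,\mathbf f)$ with $W$ ranging over the irreducible $\mathrm{Gr}(r,S_{k,d})$ and $\mathbf f$ over an open subset of frames.
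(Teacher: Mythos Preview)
The paper does not actually give a proof of Theorem~\ref{GeneralSection_bar k}: it is introduced with the sentence ``Extending the arguments from~\cite[Exp.~XI, Thm.~2.1]{LNM305} we get the following Bertini type result (see \cite{PSV})'' and then simply stated, with no argument supplied. So there is no in-text proof to compare your proposal against; the paper defers entirely to Artin's original Bertini-type argument and to \cite{PSV}.

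Your proposal is correct and is the standard way to handle the added clause~(v). The incidence-variety dimension count is sound: since $Z$ is closed in the projective scheme $X$, the projection $Z\times(S_{k,d})^r\to(S_{k,d})^r$ is proper, so the image of $\Sigma$ is in fact closed (not merely constructible), and your strict inequality $\dim\Sigma<\dim(S_{k,d})^r$ shows this closed image is proper. The finiteness of $q|_Z$ is exactly the ``affine and projective'' argument the paper itself uses later in Proposition~\ref{non_weighted2}. Your organizational remark---that (v) is one more nonempty open condition to be intersected with the open loci for (i)--(iv) inside the irreducible Stiefel variety of frames---is also correct and matches how ``generic'' is to be read in the statement. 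The only thing you take on faith is that (i)--(iv) are themselves nonempty open conditions for $d\gg 0$; this is precisely the content the paper outsources to Artin and \cite{PSV}, so you are deferring at the same point the paper does.
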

As an immediate consequence of this Theorem we get the following
\begin{cor}\label{GeneralSection_inf k}
Let $k$ be an infinite field. Let
$X\subset \Pro^n_k$ be a $k$-smooth closed subscheme of pure
dimension $r$.
Let ${\bf x}\subset X$ be a finite suet of closed points.
Under the Notation \ref{not:main_inf}
for each integer $d\gg 0$
for {\bf a generic} dimension $r$ vector subspace $W$ in $S_{k,d}$
and {\bf a generic} its free $k$-basis
${\bf f}=\{f_1,\dots,f_{r}\}$
the following is true \\
(i) the scheme $\underbar w:=X_{1,...,r}$ is $k$-smooth of dimension zero and $\underbar w\cap X_0=\emptyset$;\\
(ii) $\pi|_{\bf x}: {\bf x}\to \P^r$ is a closed embedding, where $\pi=[s_0:...:s_r]: X\to \P^r$; \\
(iii) ${\bf x}\subset X_{s_1}\cap X_{s_2}$; particularly, ${\bf x}\cap \underbar w=\emptyset$ and ${\bf x}\subset X_{s_2}$; \\
(iv) for each point $x\in {\bf x}\otimes_k \bar k$ the closed subscheme of $X\otimes_k \bar k$ defined by equations
$\{s_1(x)s_i=s_i(x)s_1\}$, where $i\in \{2,...,r\}$,
is smooth of dimension one.

Let $Z\subset X$ be a closed subset with $dim Z < r$.
Then one can choose the $W$ and the ${\bf f}$ above
such that the property (i) to (iv) does hold and the following is true \\
(iv) $\underbar w\cap Z=\emptyset$; particularly, $Z\subset X-\underbar w$
and the map $q|_Z: Z\to \P^{r-1}_k$ is finite.
\end{cor}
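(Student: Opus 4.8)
The plan is to deduce Corollary \ref{GeneralSection_inf k} from the algebraically closed case, Theorem \ref{GeneralSection_bar k}, by a descent argument combined with the fact that over an infinite field a nonempty open subset of affine space has a rational point. First I would encode the data $(W,\mathbf{f})$ as a point of a parameter scheme. With $f_0$ (hence $s_0$, $X_0$) fixed, the datum of a subspace $W\subset S_{k,d}$ of dimension $r$ together with a basis $\mathbf{f}=\{f_1,\dots,f_r\}$ is the same as an $r$-tuple $(f_1,\dots,f_r)$ of linearly independent elements of $S_{k,d}$, i.e. a $k$-point of an open subscheme $V$ of the affine space $\mathbb{A}:=\mathbb{A}(S_{k,d}^{\oplus r})$ over $k$. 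Over $V$ one has the tautological sections $s_i=f_i|_X$, the tautological families of closed subschemes $\underline{w}$ and $X_I$, and the tautological map $\pi=[s_0:\dots:s_r]$, all defined over $k$.

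Next I would show that the locus $U\subset V$ on which all of (i)--(v) hold is open and defined over $k$. Each condition is geometric and open: the non-vanishing statements (iii) ($\mathbf{x}\subset X_{s_1}\cap X_{s_2}$), the non-vanishing needed for $\pi$ to be defined on $\mathbf{x}$, and the disjointness in (v) ($\underline{w}\cap Z=\emptyset$ and $\underline{w}\cap X_0=\emptyset$) are open conditions cut out over $k$; condition (ii), that $\pi|_{\mathbf{x}}$ is a closed immersion of the finite scheme $\mathbf{x}$, is open; and conditions (i) and (iv), the smoothness of dimension zero of $\underline{w}$ and the smoothness of dimension one of the subschemes defined by $\{s_1(x)s_i=s_i(x)s_1\}$, are open by upper semicontinuity of fibre dimension for the associated proper families over $V$ (using that $X/k$ is proper, so the relevant incidence schemes are closed in $\mathbb{A}\times X$ and proper over $\mathbb{A}$) together with openness of the smooth locus of a morphism. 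I would also note that, granting the disjointness $\underline{w}\cap Z=\emptyset$, the finiteness of $q|_Z\colon Z\to\mathbb{P}^{r-1}_k$ in (v) is then automatic exactly as in Proposition \ref{weighted2} ($q|_Z$ is affine and projective). Thus $U$ is open in $\mathbb{A}$ and defined over $k$.

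I would then base change to $\bar k$. Since every condition (i)--(v) is stable under the extension $k\subset\bar k$ and is detected after passing to $\bar k$ --- indeed condition (iv) is already phrased over $\bar k$, with $X\otimes_k\bar k$ smooth of pure dimension $r$ because $X$ is $k$-smooth --- the base change $U_{\bar k}=U\times_k\bar k$ coincides with the good locus inside $\mathbb{A}_{\bar k}$ attached to $(X_{\bar k},\mathbf{x}_{\bar k},Z_{\bar k})$. By Theorem \ref{GeneralSection_bar k}, for $d\gg 0$ this good locus is a nonempty dense open subset of $\mathbb{A}_{\bar k}$. As $\mathbb{A}_{\bar k}\to\mathbb{A}$ is surjective, $U$ is a nonempty dense open subset of $\mathbb{A}$, which is already the ``generic'' statement over $k$. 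Finally, since $k$ is infinite, $\mathbb{A}(k)$ is Zariski dense in $\mathbb{A}$ and hence meets $U$, so $U(k)\neq\emptyset$; any $k$-point of $U$ furnishes $W$ and $\mathbf{f}$ over $k$ satisfying (i)--(v).

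The main obstacle is the openness verification in the second step, specifically that the smoothness-and-dimension conditions (i) and (iv) cut out \emph{open} loci. This is where the real work lies: one must organize $\underline{w}$ and the auxiliary subschemes cut out by $\{s_1(x)s_i=s_i(x)s_1\}$ into finite-type (indeed proper) families over $V$, apply semicontinuity of fibre dimension and openness of the smooth locus, and keep track that the locus so produced is genuinely the base change to $\bar k$ of a locus defined over $k$, rather than merely a geometrically defined subset of $\mathbb{A}_{\bar k}$. Once this compatibility is in place, the passage from $\bar k$ to the infinite field $k$ is formal.
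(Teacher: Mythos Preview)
Your proposal is correct and is exactly the standard descent argument that the paper leaves implicit: the paper simply writes ``As an immediate consequence of this Theorem we get the following'' before stating the corollary, with no further proof. Your parametrization of $(W,\mathbf f)$ by an open piece of an affine space, the verification that the good locus is open and defined over $k$, the identification of its base change with the good locus supplied over $\bar k$ by Theorem~\ref{GeneralSection_bar k}, and the use of the infinitude of $k$ to find a $k$-point, together constitute precisely the intended passage from the algebraically closed case to an arbitrary infinite field.
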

{\bf For the rest of the section the field $k$ is infinite.
}
We use below in this section Notation \ref{not:main_inf}.
\begin{notation}\label{notn:blowups}
Let $X$, $r\geq 1$, ${\bf x}\subset X$, a closed subset $Z\subset X$, \\
the integer $d\gg 0$, \\
{\bf the generic} dimension $r$ vector subspace $W$ in $S_{k,d}$
and \\
{\bf the generic} free $k$-basis
${\bf f}=\{f_1,\dots,f_{r}\}$ of $W$
be as in Theorem \ref{GeneralSection_bar k}. \\
Write $\mathcal O(d)$ for $\mathcal O_{\mathbb P^n}(d)|_X$. \\
Write $s_i\in \Gamma(X,\mathcal O(e_i))$ for $f_i|_X$.\\
Let $m\geq 0$, then write $\mathbb A^m$ for $\mathbb A^m_k$. \\
Put $\dot q=(s_2/s_1,...,s_r/s_1): \dot X\to \mathbb A^{r-1}$;\\
For a Zariski open $S$ in $\mathbb A^{r-1}$ put $\dot X_S=\dot q^{-1}(S)$ and $\dot Z_S=Z\cap \dot X_S$.\\
Let $\mathbb P^{r}:=\mathbb P^{r}_{k}$ be the projective space
with homogeneous coordinates $[t_0:t_1:...:t_r]$.\\
Let $\mathbb P^{r-1}:=\mathbb P^{r-1}_k$ be the projective space
with homogeneous coordinates $[x_1:...:x_r]$.\\
Put $X_{s_i}=X-X_i$, $\mathbb P^{r}_{t_i}=\mathbb P^{r}-\{t_i=0\}$ and
$\mathbb P^{r-1}_{x_i}=\mathbb P^{r-1}-\{x_i=0\}$.\\
Write $\dot X$ for $X_{s_1}$ and $\dot {\mathbb P}^{r}$ for $\mathbb P^{r}_{t_1}$.\\
Note that $\mathbb P^{r-1}_{x_1}$ is the affine space $\mathbb A^{r-1}$.\\
The identification is given by sending
$[x_1:x_2:...:x_r]$ to $(x_2/x_1,...,x_r/x_1)$.\\
Note that $\mathbb P^{r}_{t_0}$ is the affine space $\mathbb A^{r}$.\\
The identification is given by sending
$[t_0:t_1:...:t_r]$ to $(t_1/t_0,...,t_r/t_0)$.
\end{notation}

\begin{prop}\label{non_weighted}
Under the Notation \ref{notn:blowups}
the morphism
\begin{equation}\label{w}
\pi=[s_0:s_1:...:s_r]: X\to \mathbb P^{r}
\end{equation}
is well-defined and finite.
\end{prop}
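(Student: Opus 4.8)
The plan is to mirror the proof of Proposition \ref{weighted}, replacing the weighted projective space by the ordinary one. The argument splits into two parts: first I would check that $\pi$ is a well-defined morphism of schemes, and then that it is finite by exhibiting it as a morphism that is simultaneously affine and projective.

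For well-definedness, recall that a morphism $[s_0:\ldots:s_r]\colon X\to\mathbb P^r$ is defined precisely on the locus where the sections $s_0,\ldots,s_r$ do not all vanish, that is, on $X-\bigcap_{i=0}^r X_i$. So it suffices to verify that $\bigcap_{i=0}^r X_i=\emptyset$. By definition $\bigcap_{i=1}^r X_i=\underline w$, and Corollary \ref{GeneralSection_inf k}(i) furnishes exactly $\underline w\cap X_0=\emptyset$; hence the common zero locus of all the $s_i$ is empty and $\pi$ is globally defined on $X$.

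For finiteness I would argue locally on the target. The standard open cover $\mathbb P^r=\bigcup_{i=0}^r\mathbb P^r_{t_i}$ pulls back under $\pi$ to $\pi^{-1}(\mathbb P^r_{t_i})=X_{s_i}=X-X_i$, the non-vanishing locus of the section $s_i$ of the ample line bundle $\mathcal O(d)=\mathcal O_{\mathbb P^n}(d)|_X$; each such locus is affine. Thus $\pi$ is an affine morphism. On the other hand $X$ is projective over $k$, so $\pi$ is a projective, in particular proper, morphism. A morphism that is both affine and proper is finite, which gives the claim.

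I do not expect a genuine obstacle here, since every ingredient is already in place. The only point requiring attention is the well-definedness, which rests entirely on the genericity conclusion $\underline w\cap X_0=\emptyset$ of Corollary \ref{GeneralSection_inf k}; once that is invoked, the affine-plus-proper implies finite argument is formal and identical to the weighted case treated in Proposition \ref{weighted}.
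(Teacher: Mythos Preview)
Your proof is correct and follows essentially the same approach as the paper: the paper's proof records the open cover $X=\bigcup_i X_{s_i}$ (which is your well-definedness statement), notes that each $X_{s_i}$ is affine so $\pi$ is affine, observes that $\pi$ is projective, and concludes finiteness. You supply a bit more detail on well-definedness by explicitly invoking $\underline w\cap X_0=\emptyset$, but the substance is identical.
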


\begin{proof}
One has $X=\cup^r_{i=0}X_{s_i}$, $\mathbb P^{r}=\cup^r_{i=0}\mathbb P^{r}_{t_i}$ and
$\pi^{-1}(\mathbb P^{r}_{t_i})=X_{s_i}$.
Since each $X_{s_i}$ is affine, the morphism $\pi$ is affine.
At the same time $\pi$ is projective. Thus, $\pi$ is finite.
\end{proof}

\begin{prop}\label{non_weighted2}
Under the hypotheses of Theorem \ref{Main3} and Notation \ref{notn:blowups} let $M\subset X$ be a closed subscheme
such that $\underline w\cap M=\emptyset$. Then the morphism $[s_1:...:s_r]: M\to \mathbb P^{r-1}$
is finite.
Particularly, the morphism
$[s_1:...:s_r]: Z\to \mathbb P^{r-1}$
is finite.
\end{prop}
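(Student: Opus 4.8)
The plan is to reproduce the argument of Proposition \ref{non_weighted} (and of its weighted analogue Proposition \ref{weighted2}): I would show that the morphism is at once affine and projective, hence finite. First I would verify that $\phi:=[s_1:\ldots:s_r]\colon M\to\mathbb P^{r-1}$ is a well-defined morphism. Since $\underline w=X_{1,\ldots,r}$ is precisely the common zero locus of the sections $s_1,\ldots,s_r$ of $\mathcal O(d)$ on $X$, the hypothesis $\underline w\cap M=\emptyset$ forces at least one $s_i$ to be nonzero at each point of $M$, so the homogeneous data $[s_1:\ldots:s_r]$ define a morphism on all of $M$.

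Next I would prove affineness. Covering $\mathbb P^{r-1}$ by the standard charts $\mathbb P^{r-1}_{x_i}=\mathbb P^{r-1}-\{x_i=0\}$, one has $\phi^{-1}(\mathbb P^{r-1}_{x_i})=M\cap X_{s_i}$, where $X_{s_i}=X-X_i$ is the nonvanishing locus of the section $s_i$. Because $\mathcal O(d)=\mathcal O_{\mathbb P^n}(d)|_X$ is (very) ample on the projective $k$-scheme $X$, the complement $X_{s_i}$ of the divisor $X_i$ cut out by $s_i$ is affine; hence each $M\cap X_{s_i}$, being a closed subscheme of an affine scheme, is affine, and $\phi$ is an affine morphism.

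For the remaining point I would note that $M$, being closed in the $k$-projective scheme $X$, is itself projective over $k$, so $\phi$ is a $k$-morphism from a proper scheme into the separated scheme $\mathbb P^{r-1}_k$ and is therefore proper; a morphism that is both affine and proper is finite, which gives the first assertion. The \emph{Particularly} statement then follows by taking $M=Z$, since the chosen $W$ and ${\bf f}$ satisfy $\underline w\cap Z=\emptyset$ by Corollary \ref{GeneralSection_inf k}. I do not expect any genuine obstacle here: the whole argument is the same two-step affine-plus-projective reasoning as in Propositions \ref{weighted} and \ref{weighted2}, the only nontrivial input being the ampleness of $\mathcal O(d)$ used to see that the $X_{s_i}$ are affine.
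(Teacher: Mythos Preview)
Your proposal is correct and follows exactly the paper's approach: the paper's entire proof is the one-line ``It is affine and projective,'' and you have simply unpacked those two words (affineness via the standard cover $\phi^{-1}(\mathbb P^{r-1}_{x_i})=M\cap X_{s_i}$ with each $X_{s_i}$ affine by ampleness of $\mathcal O(d)$, and properness/projectivity because $M$ is closed in the projective scheme $X$). The reference to Corollary~\ref{GeneralSection_inf k} for $\underline w\cap Z=\emptyset$ in the ``Particularly'' clause is also the right one in this infinite-field section (the citation of Theorem~\ref{Main3} in the statement appears to be a leftover from the finite-field analogue).
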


\begin{proof}
It is affine and projective.
\end{proof}

The following partial result will be proven at the end of in Section \ref{usual_blowups}.
\begin{thm}\label{thm_Artin_2}
Under the hypotheses of Theorem \ref{GeneralSection_bar k}, Notation \ref{not:main_inf} and \ref{notn:blowups}
for each integer $d\gg 0$,
for {\bf a generic} dimension $r$ vector subspace $W$ in $S_{k,d}$
and {\bf a generic} free $k$-basis
${\bf f}=\{f_1,\dots,f_{r}\}$ of $W$
the following is true. \\
For the affine variety $\dot X$ and the morphism
$\dot q: \dot X\to \mathbb A^{r-1}$
there exists a Zariski open neighborhood $S\subset \mathbb A^{r-1}$ of the
finite set $\dot q({\bf x})$
such that 
\[
  \text{$\dot q_S: \dot X_S\to S$ is a weak elementary $\dot Z_S$-fibration. } 
\]
\end{thm}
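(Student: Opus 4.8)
The plan is to run the finite-field argument of Section~\ref{blowups} essentially verbatim, with the weighted projective spaces $\mathbb{P}^{r,w}$, $\mathbb{P}^{r-1,w}$ replaced throughout by the ordinary projective spaces $\mathbb{P}^r$, $\mathbb{P}^{r-1}$ of Notation~\ref{notn:blowups}. Since here all $f_i$ share the common degree $d$, every weight $d_{i,j}$ equals $1$ and the weighted blowup degenerates to the ordinary blowup of $X$ along the reduced zero-dimensional center $\underline{w}$. The one genuinely new feature is that the points ${\bf x}$ no longer collapse to the origin of $\mathbb{A}^{r-1}$: by Corollary~\ref{GeneralSection_inf k}(iii) one has $s_1(x)\neq 0$ for every $x\in{\bf x}$, so $\dot q({\bf x})$ is a finite set of closed points spread over $\mathbb{A}^{r-1}$, and the neighborhood $S$ we seek must contain all of them rather than a single point.

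First I would introduce the blowup $\hat X\subset X\times\mathbb{P}^{r-1}$ cut out by $s_jx_i=s_ix_j$ ($j>i$), the blowup $\hat{\mathbb{P}}^r\subset\mathbb{P}^r\times\mathbb{P}^{r-1}$ cut out by $t_jx_i=t_ix_j$, and the finite morphism $\hat\pi:\hat X\to\hat{\mathbb{P}}^r$ lifting $\pi$ of Proposition~\ref{non_weighted}; write $\tau$, $\sigma$ for the blow-down maps and $\hat q$, $\hat p$ for the projections to $\mathbb{P}^{r-1}$. Passing to the chart $x_1\neq 0$, the analog of Corollary~\ref{smooth} is now immediate, since $\hat{\mathbb{P}}^r_{x_1}\cong\mathbb{P}^1\times\mathbb{A}^{r-1}$ is manifestly smooth projective over $\mathbb{A}^{r-1}$ with fibre $\mathbb{P}^1$. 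The analogs of Lemmas~\ref{flat_pi_0}, \ref{flat}, \ref{hat_q_fl_proj} go through unchanged: the base change $\pi_0:X_{s_0}\to\mathbb{A}^r$ of $\pi$ is finite flat (finiteness from Proposition~\ref{non_weighted}, flatness by miracle flatness since source and target are smooth of the same dimension), hence so are its blowups, and therefore $\hat q=\hat p\circ\hat\pi_{x_1}:\hat X_{x_1}\to\mathbb{A}^{r-1}$ is flat projective. Finally the analog of Proposition~\ref{smooth2} holds because $\pi_0^{-1}(0)=\underline{w}$ is $k$-smooth of dimension zero by Corollary~\ref{GeneralSection_inf k}(i), so $\pi_0$ is \'etale over the origin and the \'etale-neighborhood argument shows $\hat X_{x_1}$ is smooth.

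The crux is the fibre-smoothness step, the analog of Proposition~\ref{smooth_fibre}, now carried out at every point of $\dot q({\bf x})$. Let $U\subset\mathbb{A}^r$ be a Zariski open containing $0$ over which $\pi_0$ is finite \'etale and $W=\pi_0^{-1}(U)$, as in the proof of Proposition~\ref{smooth2}; the key point is that the blowup $\hat U$ of $U$ at $0$ contains the entire exceptional divisor $E_0$, so $\hat W_{x_1}$ contains all infinity points $(w,s)$ for every $s\in\mathbb{A}^{r-1}$, and there $\hat\pi_{x_1}$ is finite \'etale onto $\hat U_{x_1}\subset\mathbb{P}^1\times\mathbb{A}^{r-1}$. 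For $s=\dot q(x)$ with $x\in{\bf x}$ I would use the open cover $\hat q^{-1}(s)=(\hat q^{-1}(s)-E_w)\cup(\hat q^{-1}(s)\cap\hat W_{x_1})$. On the first piece $\tau$ is an isomorphism, so it coincides with the locus $s_1\neq 0$ of the subscheme $\{s_1(x)s_i=s_i(x)s_1\}_{i=2,\dots,r}$ of $X$, which is smooth of dimension one over $\bar k$ by Corollary~\ref{GeneralSection_inf k}(iv). On the second piece $\hat q^{-1}(s)\cap\hat W_{x_1}$ is finite \'etale over an open subset of the smooth fibre $\mathbb{P}^1\times\{s\}$, hence smooth. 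Thus $\hat q^{-1}(s)$ is a smooth projective curve for every $s\in\dot q({\bf x})$. Since $\hat q$ is proper and flat with smooth source, the image under $\hat q$ of its closed non-smooth locus is a closed $D\subset\mathbb{A}^{r-1}$ of points with non-smooth fibre, disjoint from $\dot q({\bf x})$; taking $S=\mathbb{A}^{r-1}-D$ gives, as in Corollary~\ref{sm_proj}, a smooth projective $\hat q_S:\hat X_S\to S$ with equidimensional one-dimensional fibres and $\dot q({\bf x})\subset S$.

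Finally I would assemble the diagram of Definition~\ref{DefnElemFib} exactly as in Theorem~\ref{w_elem_fib_1}. The open embedding $j=\tau^{-1}:\dot X\hookrightarrow\hat X_{x_1}$ identifies $\dot X$ with $\hat X_{x_1}-(\mathbb{A}^{r-1}\times\underline{w})$, giving (i); condition (ii) is the fibre-smoothness just established; the closed embedding $i:\mathbb{A}^{r-1}\times\underline{w}\hookrightarrow\hat X_{x_1}$ has $q_\infty$ finite \'etale with nonempty fibres (as $\underline{w}$ is $k$-smooth of dimension zero and nonempty), giving (iii); and (iv) follows from Proposition~\ref{non_weighted} by restricting $\pi$ to $\dot X=X_{s_1}=\pi^{-1}(\mathbb{P}^r_{t_1})$, whose target $\mathbb{P}^r_{t_1}\cong\mathbb{A}^1\times\mathbb{A}^{r-1}$ has projection to $\mathbb{A}^{r-1}$ equal to $\dot q$, so that $\pi|_{\dot X}$ is finite flat surjective with $\dot q=\mathrm{pr}_{\mathbb{A}^{r-1}}\circ\pi|_{\dot X}$. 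Base-changing along $S\hookrightarrow\mathbb{A}^{r-1}$ then exhibits $\dot q_S:\dot X_S\to S$ as a weak elementary fibration with ${\bf x}\subset\dot X_S$. For the $Z$-refinement, the final assertion of Corollary~\ref{GeneralSection_inf k} gives $\underline{w}\cap Z=\emptyset$, so Proposition~\ref{non_weighted2} makes $[s_1:\dots:s_r]:Z\to\mathbb{P}^{r-1}$ finite; restricting to $\dot X$ and then to $S$ shows $\dot q_S|_{\dot Z_S}:\dot Z_S\to S$ is finite, whence $\dot q_S$ is a weak elementary $\dot Z_S$-fibration. The only step demanding more than the finite-field case is the simultaneous fibre-smoothness over the spread-out set $\dot q({\bf x})$, which is precisely why Corollary~\ref{GeneralSection_inf k}(iv) is phrased over all geometric points of ${\bf x}$.
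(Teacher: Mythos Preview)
Your proposal is correct and follows exactly the paper's approach: the paper's own proof of Theorem~\ref{w_el_fib_inf_field} (from which Theorem~\ref{thm_Artin_2} follows as Corollary~\ref{cor: w_el_fib_inf_field}) consists of the two lines ``Take $(e_0,e_1,\dots,e_r)$ from Section~\ref{blowups} equal $(1,1,\dots,1)$. And repeat literally arguments proving Theorem~\ref{w_elem_fib_1}.'' Your write-up is in fact more detailed than the paper's, and you correctly isolate the one genuine difference from the finite-field case---that fibre smoothness must now be verified at every point of the spread-out finite set $\dot q({\bf x})$ rather than at the single origin, which is exactly what Corollary~\ref{GeneralSection_inf k}(iv) supplies.
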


\section{Proof of the infinite field case of Theorem \ref{Very_General}}\label{usual_blowups}
The main aim of this section is to prove Theorem \ref{thm_Artin_2}. We follow in this section Notation used in Section
\ref{sect:inf_field_case}.
\begin{constr}\label{hat X}
Under Notation \ref{notn:blowups}
Define $\hat X$ as a closed subscheme of $X\times \mathbb P^{r-1}$
given by equations $s_jx_i=s_ix_j$.
We regard $\hat X$ as the blowup of $X$ at the \'{e}tale over $\mathrm{Spec} (k)$ subscheme $\underline w$ of $X$.

If $X=\mathbb P^{r}$ is the projective space as in Proposition \ref{non_weighted}
then $\hat {\mathbb P}^{r}$ is
a closed subscheme of $\mathbb P^{r}\times \mathbb P^{r-1}$
given by equations $t_jx_i=t_ix_j$.
We regard $\hat {\mathbb P}^{r}$ as a weighted blowup of $\mathbb P^{r}$ at the point $\mathbf 0:=[1:0:...:0]$.
\end{constr}
The projection $X\times \mathbb P^{r-1}\to \mathbb P^{r-1}$
induces a morphism
$\hat q: \hat X\to \mathbb P^{r-1}$
allowing to consider the scheme $\hat X$
as a $\mathbb P^{r-1}$-scheme.
Let $[x_1:...:x_r]$ be homogeneous coordinates on $\mathbb P^{r-1}_k$.
Put
$\mathbb P^{r-1}_{x_i}=\mathbb P^{r-1}-\{x_i=0\}$.
Clearly, $\mathbb P^{r-1}_{x_1}\cong \mathbb A^{r-1}_k$.
Just identify
$[x_1:x_2:...:x_r]$ with $(x_2/x_1,...,x_r/x_1)$.
Put $\hat X_{x_1}=\hat q^{-1}(\mathbb P^{r-1}_{x_1})$.
The $X\subset \P^n_k$, the $W$ and the ${\bf f}=\{f_1,...,f_r\}$ are as in
Corollary \ref{GeneralSection_inf k}. Particularly,
the $\underbar w$ is finite \'{e}tale over $\mathrm{Spec} (k)$.
Let $[x_1:...:x_r]$ as above be the homogeneous coordinates on $\mathbb P^{r-1}_k$,
$\mathbb P^{r-1}_{x_i}=\mathbb P^{r-1}-\{x_i=0\}$ and
$\mathbb P^{r-1}_{x_1}\cong \mathbb A^{r-1}_k$.
Just identify
$[x_1:x_2:...:x_r]$ with $(x_2/x_1,...,x_r/x_1)$.
As above write $\hat X_{x_1}$ for $\hat q^{-1}(\mathbb P^{r-1}_{x_1})$.


\begin{notation}\label{tau_and_sigma}
The projection $X\times \mathbb P^{r-1}\to X$
and $\mathbb P^{r}\times \mathbb P^{r-1}\to \mathbb P^{r}$
induces a morphism
$\tau: \hat X\to X$
and
$\sigma: \hat {\mathbb P}^{r}\to \mathbb P^{r}$ respectively.
Write
$\mathbf 0$ for $[1:0:...:0]$ in $\mathbb P^{r}$.
Put $E_w=\tau^{-1}(\underbar w)$
and
$E_0=\sigma^{-1}(\mathbf 0)$.\\
Note that $E_w=\P^{r-1}\times_{pt} \underbar w$ where $pt$ stands for $\mathrm{Spec}(k)$; \\
Write $\hat X\xleftarrow{i} E_w=\P^{r-1}\times_{pt} \underbar w$ \ for the closed embedding.\\
Write $X_{s_i}$ for $X-X_i$ and write $\dot X$ for $X_{s_1}$.\\
Let $Z\subset X$ be a closed subset with $Z\subset X-\underbar w$.
Put $\dot Z=Z\cap \dot X$.
\end{notation}

\begin{rem}
Clearly, the morphism
$\tau: \hat X-E_w\to X-\underline w$
is an isomorphisms. \\
Put $\hat Z$ for $\tau^{-1}(Z)$ and $Z_{x_1}=\hat Z\cap \hat X_{x_1}$.
\end{rem}

\begin{prop}\label{X_s_1}
Put $\tau_1=\tau|_{\hat X_{x_1}-(E_w)_{x_1}}: \hat X_{x_1}-(E_w)_{x_1}\to \dot X$. Then \\
1) the morphism
$\tau_1: \hat X_{x_1}-(E_w)_{x_1}\to \dot X$
is an isomorphism; \\
2) the schemes $\hat X_{x_1}-(E_w)_{x_1}$ and
$\dot X$
are $k$-smooth.\\
3) the morphism
$\tau_1|_{\hat Z_{x_1}}: \hat Z_{x_1}\to \dot Z$
is an isomorphism.
\end{prop}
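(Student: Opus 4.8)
The plan is to follow the finite-field argument of the counterpart proposition in Section~\ref{blowups}, adding the extra bookkeeping for $Z$ demanded by item~3). The whole statement rests on the observation already recorded in the Remark preceding it: the projection $\tau\colon \hat X-E_w\to X-\underline w$ is an isomorphism, with explicit inverse $x\mapsto (x,[s_1(x):\dots:s_r(x)])$. Thus everything reduces to pinning down which open of $\hat X-E_w$ corresponds, under this isomorphism, to the open $\dot X=X_{s_1}$ of $X-\underline w$.

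First I would settle item~1). For $x\in \dot X=X_{s_1}$ one has $s_1(x)\neq 0$ and $x\notin\underline w$ (indeed $\underline w=X_{1,\dots,r}\subset X_1$ forces $X_{s_1}\cap\underline w=\emptyset$), so the inverse $x\mapsto(x,[s_1(x):\dots:s_r(x)])$ lands in $\hat X_{x_1}-(E_w)_{x_1}$, its first homogeneous coordinate $s_1(x)$ being nonzero. Conversely, a point $(x,[x_1:\dots:x_r])$ of $\hat X_{x_1}-(E_w)_{x_1}$ has $x_1\neq 0$ and $x\notin\underline w$; since $x\notin\underline w$ some $s_i(x)\neq 0$, and then the defining relations $s_j(x)x_i=s_i(x)x_j$ of $\hat X$ force $[x_1:\dots:x_r]=[s_1(x):\dots:s_r(x)]$, whence $x_1\neq 0$ gives $s_1(x)\neq 0$, i.e.\ $\tau(x,[x_1:\dots:x_r])=x\in X_{s_1}=\dot X$. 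Therefore $\tau_1$ and the displayed map are mutually inverse, so $\tau_1$ is an isomorphism.

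Item~2) is then immediate and mirrors the finite-field proof verbatim: $X$ is $k$-smooth by hypothesis, hence its open subscheme $\dot X=X_{s_1}$ is $k$-smooth, and $\hat X_{x_1}-(E_w)_{x_1}\cong\dot X$ is $k$-smooth as well. For item~3) I would merely restrict the isomorphism $\tau_1$ to closed subschemes. Since $Z\subset X-\underline w$, its preimage $\hat Z=\tau^{-1}(Z)$ lies in $\hat X-E_w$, so $\hat Z_{x_1}=\hat Z\cap\hat X_{x_1}\subset \hat X_{x_1}-(E_w)_{x_1}$ and $\tau_1|_{\hat Z_{x_1}}$ is defined; being the restriction of an isomorphism to a closed subscheme, it carries $\hat Z_{x_1}$ isomorphically onto $\tau(\hat Z_{x_1})=Z\cap X_{s_1}=\dot Z$.

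There is no genuinely hard step here; the one point deserving care is the reverse inclusion in item~1), namely that on $\hat X_{x_1}-(E_w)_{x_1}$ the section $s_1$ is forced to be nonzero rather than merely $x\notin\underline w$. This is exactly where the blowup relations $s_jx_i=s_ix_j$ and the hypothesis $\underline w\subset X_1$ enter, and it is the place where a hasty argument might wrongly conflate ``$x\notin\underline w$'' with ``$s_1(x)\neq 0$''.
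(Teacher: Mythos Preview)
Your proof is correct and follows the same approach as the paper. In fact, the paper gives no explicit proof for this Section~\ref{usual_blowups} version of the proposition; the analogous statement in Section~\ref{blowups} is proved by the single sentence ``The first assertion is clear,'' followed by the smoothness argument you reproduce verbatim for item~2). Your explicit verification that $\tau^{-1}(\dot X)=\hat X_{x_1}-(E_w)_{x_1}$ using the blowup relations, and your observation that item~3) is just the restriction of the isomorphism in item~1) to a closed subscheme contained in the source, simply spell out what the paper leaves implicit.
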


\begin{notation}\label{j=tau -1}
Write $j: \dot X\hra \hat X_{x_1}$ for $\dot X\xrightarrow{\tau^{-1}} \hat X_{x_1}-(E_w)_{x_1}\hra \hat X_{x_1}$.\\
Clearly, $j|_{\dot Z}$ identifies $\dot Z$ with $\hat Z_{x_1}$; \\
Put $\dot q=\hat q\circ j: \dot X\to \mathbb A^{r-1}_k$; clearly, $\dot q=(s_2/s_1,...,s_r/s_1)$; \\
For an open $S$ in $\mathbb A^{r-1}_k$ write \\
$\dot X_S$ for $\dot q^{-1}(S)$, $\hat X_S$ for $\hat q^{-1}(S)$; $\dot Z_S$ for $Z\cap \dot X_S$; $\hat Z_S$ for $\hat Z\cap \hat X_S$; \\
clearly, $\dot X_S=j^{-1}(\hat X_S)$; \\
put $\dot q_{S}=\dot q|_{\dot X_S}: \dot X_S\to S$ and $\hat q_S=\hat q|_{\hat X_S}: \hat X_S\to S$ and $j_S=j|_{\dot X_S}: \dot X_S\hra \hat X_S$; \\
clearly, $j_S|_{\dot Z_S}$ identifies $\dot Z_S$ with $\hat Z_S$; \\
clearly, $\dot q_S=\hat q_S\circ j_S: \dot X_S\to S$; \\
put $\hat X_S\leftarrow S\times_{pt} \underbar w: \ i_S=i\times_{pt} \underbar w$; \\
clearly, $\hat q_S\circ i_S=pr_S: S\times_{pt} \underbar w\to S$.
\end{notation}

\begin{thm}\label{w_el_fib_inf_field}
Let $k$,
$X\subset \Pro^n_k$, $r$, $x\in X$, $d\gg 0$,
{\bf a generic} dimension $r$ vector subspace $W$ in $S_{k,d}$
{\bf a generic} its free $k$-basis
${\bf f}=\{f_1,\dots,f_{r}\}$
be as in of Corollary \ref{GeneralSection_inf k}.
Let $S$ be the open neighborhood in $\mathbb A^{r-1}_k$ of the point
$\dot q(x)\in \P^{r-1}_{x_1}=\mathbb A^{r-1}_k$
as in the item (iii) of Corollary \ref{GeneralSection_inf k}.
Then under Notation \ref{tau_and_sigma} and  Notation \ref{j=tau -1} the commutative diagram
\begin{equation}
\label{SquareDiagram_0}
    \xymatrix{
     \dot X_S\ar[drr]_{\dot q_S}\ar[rr]^{j_S}&&
\hat X_S\ar[d]^{\hat q_S}&&S\times_{pt} \bar w\ar[ll]_{i_S}\ar[lld]^{pr_S} &\\
     && S  &\\    }
\end{equation}
is a diagram of a weak elementary fibration.

Let $Z\subset X$ be a closed subset with $\dim Z < r$.
Then one can choose the $W$ and the ${\bf f}$ above such that
the properties (i) to (iii) as in Corollary \ref{GeneralSection_inf k}
does hold and
$\underbar w\cap Z=\emptyset$.
In this case the diagram \eqref{SquareDiagram_0} is a diagram of the weak elementary $\dot Z_S$-fibration
$\dot q_S: \dot X_S\to S$.
\end{thm}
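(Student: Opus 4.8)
The plan is to verify, for the diagram \eqref{SquareDiagram_0}, the four conditions of Definition \ref{DefnElemFib} together with the finiteness of $\dot q_S|_{\dot Z_S}$, following the template of the finite field case of Section \ref{blowups} but with the ordinary blowup of Construction \ref{hat X} replacing the weighted one. First I would record the factorisation underlying condition (iv). By Proposition \ref{non_weighted} the morphism $\pi=[s_0:\ldots:s_r]\colon X\to\mathbb P^{r}$ is finite, and it is surjective since $X$ is equidimensional of dimension $r$ and its closed image is then an $r$-dimensional closed subset of the irreducible $\mathbb P^r$. Restricting over the chart $\mathbb P^r_{t_1}\cong\mathbb A^r$ gives a finite surjective morphism $\dot X=X_{s_1}\to\mathbb A^r=\mathbb A^1\times\mathbb A^{r-1}$, namely $(s_0/s_1,s_2/s_1,\ldots,s_r/s_1)$, whose composite with the second projection is $\dot q$. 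Writing $\mathbb A^1_S=\mathbb A^1\times S$ and restricting over $S$, I obtain a finite surjective $\pi\colon\dot X_S\to\mathbb A^1_S$ with $\dot q_S=\mathrm{pr}_S\circ\pi$; since $\dot X_S$ is $k$-smooth by Proposition \ref{X_s_1} and $\mathbb A^1_S$ is regular, both of dimension $r$, this finite morphism is automatically flat, which gives condition (iv).

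The heart of the argument is condition (ii): that $\hat q_S\colon\hat X_S\to S$ is smooth projective with fibres equidimensional of dimension one. I would first treat the model blowup $\hat{\mathbb P}^{r}$ of $\mathbb P^r$ at $\mathbf 0$: over the chart $x_1\ne 0$ it is smooth, and $\hat p\colon\hat{\mathbb P}^r_{x_1}\to\mathbb A^{r-1}$ is smooth projective with fibre $\mathbb P^1$ (the analogue of Lemma \ref{phi_psi} and Corollary \ref{smooth}, but simpler, as over the affine chart the blowup of $\mathbb P^r$ at a point is a trivial $\mathbb P^1$-bundle). Pulling back along the finite $\pi$, the induced $\hat\pi_{x_1}\colon\hat X_{x_1}\to\hat{\mathbb P}^r_{x_1}$ is finite flat: away from the exceptional loci it coincides with $\pi$, and over a neighbourhood of $\mathbf 0$ the morphism $\pi$ is finite \'etale because $\pi^{-1}(\mathbf 0)=\underline w$ is $k$-smooth of dimension zero by Corollary \ref{GeneralSection_inf k}(i), so flatness of the two pieces glues (analogues of Lemmas \ref{flat_pi_0} and \ref{flat}). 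Hence $\hat q=\hat p\circ\hat\pi_{x_1}$ is flat projective with one-dimensional fibres, and $\hat X_{x_1}$ is smooth, combining smoothness of $\hat X_{x_1}-(E_w)_{x_1}$ from Proposition \ref{X_s_1} with smoothness near $E_w$, where $\hat X_{x_1}$ is an \'etale cover of the smooth $\hat{\mathbb P}^r_{x_1}$ (analogue of Proposition \ref{smooth2}). Finally, the fibre $\hat q^{-1}(\dot q(x))$ is the closure in $\hat X_{x_1}$ of $\{s_1(x)s_i=s_i(x)s_1\}_{i\ge 2}\cap X_{s_1}$, which Corollary \ref{GeneralSection_inf k}(iv) guarantees is smooth of dimension one; since the locus where the proper flat morphism $\hat q$ fails to be smooth is closed with closed image avoiding $\dot q(x)$, its complement is an open $S\ni\dot q(x)$ over which $\hat q_S$ is smooth projective with one-dimensional fibres (analogue of Corollary \ref{sm_proj}). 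In particular $\mathbf x\subset\dot q^{-1}(S)=\dot X_S$.

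The remaining conditions are routine. For condition (iii), $\underline w$ is finite \'etale over $\mathrm{Spec}(k)$ by Corollary \ref{GeneralSection_inf k}(i), so its base change $\mathrm{pr}_S\colon S\times_{\mathrm{pt}}\underline w\to S$ is finite \'etale with non-empty fibres. For condition (i), $j_S$ is an open immersion by Notation \ref{j=tau -1} identifying $\dot X_S$ with $\hat X_S-i_S(S\times_{\mathrm{pt}}\underline w)$, and its density in each fibre of $\hat q_S$ holds because the removed locus $E_w$ is finite over $S$, hence meets each one-dimensional fibre in finitely many points. The $Z$-refinement follows by choosing $W,\mathbf f$ with $\underline w\cap Z=\emptyset$ as in the last part of Corollary \ref{GeneralSection_inf k}: then $\dot Z\subset X_{s_1}$ and Proposition \ref{non_weighted2} shows $[s_1:\ldots:s_r]\colon Z\to\mathbb P^{r-1}$ is finite, whence $\dot q_S|_{\dot Z_S}\colon\dot Z_S\to S$ is finite.

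The main obstacle I expect is establishing the full smoothness of $\hat X_{x_1}$ and the smooth projective curve structure of the fibres along the exceptional divisor $E_w$. Away from $E_w$ everything is transported from the smooth $X$ by the isomorphism $\tau$ of Proposition \ref{X_s_1}, but along $E_w$ one must use that $\pi$ is finite \'etale over $\mathbf 0$, equivalently that $\underline w$ is reduced of dimension zero, to realise $\hat X_{x_1}$ locally as an \'etale cover of the smooth model $\hat{\mathbb P}^r_{x_1}$. This is precisely where the genericity of $W$ and $\mathbf f$ furnished by the Bertini-type Corollary \ref{GeneralSection_inf k} enters, and it is the step that requires the most care.
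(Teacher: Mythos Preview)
Your proposal is correct and follows essentially the same approach as the paper: the paper's proof of Theorem~\ref{w_el_fib_inf_field} simply says to take all the weights $(e_0,\dots,e_r)=(1,\dots,1)$ and repeat literally the arguments of Section~\ref{blowups} proving Theorem~\ref{w_elem_fib_1}, and that is precisely what you have written out---running through the analogues of Lemma~\ref{phi_psi}, Corollary~\ref{smooth}, Lemmas~\ref{flat_pi_0} and~\ref{flat}, Lemma~\ref{hat_q_fl_proj}, Proposition~\ref{smooth2}, Proposition~\ref{smooth_fibre}, Corollary~\ref{sm_proj}, and Lemma~\ref{Z_finite_A} in the unweighted setting. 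Your one adaptation---using item~(iv) of Corollary~\ref{GeneralSection_inf k} to check smoothness of the fibre over $\dot q(x)$ rather than over the origin---is exactly the adjustment needed since in the infinite-field case the points of $\mathbf x$ are not forced to lie over $0\in\mathbb A^{r-1}$.
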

\begin{proof}[Proof of Theorem \ref{w_el_fib_inf_field}]
Take $(e_0,e_1,...,e_r)$ from Section \ref{blowups}
equal $(1,1,...,1)$.
And repeat literally arguments proving
Theorem \ref{w_elem_fib_1}.
\end{proof}

Clearly, Theorem \ref{w_el_fib_inf_field} yields the following
\begin{cor}\label{cor: w_el_fib_inf_field}[=Theorem \ref{thm_Artin_2}]
Under the hypotheses of Theorem \ref{w_el_fib_inf_field}
let $S$ be the open neighborhood in $\mathbb A^{r-1}_k$ of the point
$\dot q(x)\in \mathbb A^{r-1}_k$
as in the item (iii) of Corollary \ref{GeneralSection_inf k}.
Then $\dot q_S=(s_2/s_1,...,s_r/s_1): \dot X_S\to S$ and the morphism
$\dot q_S$
is a weak elementary fibration.

Let $Z\subset X$ be a closed subset with $\dim Z < r$ and suppose
the $W$ and the ${\bf f}$ above subjects to conditions
(i) to (iii) as in Corollary \ref{GeneralSection_inf k}
and additionally
$\underbar w\cap Z=\emptyset$.
In this case the morphism $\dot q_S$
is a weak elementary $\dot Z_S$-fibration.
\end{cor}

\section{The case of a finite field $k(b)$ }\label{sect_DVR-first}
The main aim of this section is to state Theorem \ref{cor: w_el_fib_fin_A}.\\
$A$ is a regular local ring; $m\subseteq A$ is its maximal ideal;\\
$K$ is the fraction field of $A$;
$B=\mathrm{Spec}(A)$ and $b\in B$ is its closed point;\\
$k(b)$ is the residue field $A/m$ and suppose {\bf it is finite} in this section;\\
$p>0$ is the characteristic of the field $k(b)$; \\
it is supposed in this notes that the field $K$ has characteritic zero;\\
$m\geq 1$ is an integer;\\
For an $A$-scheme $Y$ we write $Y_b$ for the closed fibre of the structure morphism $Y\to B$.\\
For an $A$-scheme morphism $q: Y'\to Y$ write $q_b: Y'_b\to Y_b$ for the morphism $q\times_B b$;\\
All schemes in this section are $B$-schemes and we write $Y\times Y'$ for $Y\times_B Y'$. \\
All morphisms in this section are $B$-morphisms.
\begin{notation}\label{not:main_A}
Let $\mathbb P^n:=\mathbb P^n_{B}$ be the projective space over $B$. \\
Let $S_{A} = A[T_0,...,T_n]$ be the homogeneous coordinate ring of $\mathbb P^n_B$.\\
let $S_{A,d} \subset S_{A}$ be the $A$-submodule of homogeneous degree $d$ polynomials,\\
let $S_{A,\mathrm{hom}} = \sqcup^{\infty}_{d=0} S_{A,d}$ (the disjoint union). \\
Let $F\in S_{A,d}$ be a degree $d$ homogeneous polynomial. \\
Let $H_{F}\subset \mathbb P^n_B$ be the closed subscheme of $\mathbb P^n_B$ defined by the homogeneous ideal $(F)\subset S_{A}$.\\
Let $X$ be a $B$-smooth closed irreducible subscheme of $\mathbb P^n_B$ of relative dimension $r\geq 1$. \\
Let $X\subset \mathbb P^n_B$ be an irreducible $B$-smooth closed subscheme of relative dimension $r$.\\
Let ${\bf x}=\{x_1,...,x_l\}$ be a finite set of closed points in $X$.\\
We will write $\mathcal O(e)$ for $\mathcal O_{\mathbb P^n}(e)|_X$; \\
Let $Z\subset X$ be a closed subset such that $\dim Z_b\leq r-1$.\\
For each $F\in S_{A,\mathrm{hom}}$ put $X_F=X-H_{F}$ in $X$; \\
Let $\mathbb P^n_{b}$ be the projective space over $b$. \\
Let $S_{k(b)} = k(b)[T_0,...,T_n]$ be the homogeneous coordinate ring of $\mathbb P^n_b$.\\
Let $S_{k(b),d} \subset S_{k(b)}$ be the $k(b)$-subspace of homogeneous degree $d$ polynomials,\\
let $S_{k(b),\mathrm{hom}} = \sqcup^{\infty}_{d=0} S_{k(b),d}$ (the disjoint union). \\\\

Let $f_0,...f_r\in S_{k(b),\mathrm{hom}}$ be as in Theorem \ref{Main3} with respect to \\
the subscheme $X_b\subset \mathbb P^n_b$, its finite subset ${\bf x}$ and its closed subset $Z_b\subset X_b$. \\
Let $e_0,...,e_r$ be the degrees of $f_0,...f_r$, then for each $i=0,...,r-1$ the $e_i$ divides $e_{i+1}$.\\
For all $i,j=1,...,r$ with $j>i$ put $d_{i,j}=e_j/e_i$.\\
Let $F_0,...,F_r\in S_{A,\mathrm{hom}}$ be such that $F_i \mod \ m\equiv f_i$.\\
Put $X_i=X\cap H_{F}$ (the scheme intersection); it is a closed subscheme of $X$; \\
For any $I\subset \{0,1,...,r\}$ write $X_I$ for the scheme intesection $\cap_{i\in I}X_i$ in $X$.\\
Put $\mathbb W=X_{1,...,r}$.\\
In this case $\mathbb W_b\cap Z_b=\emptyset$
and $\mathbb W\cap Z=\emptyset$; \\
Write $s_i\in \Gamma(X,\mathcal O(e_i))$ for $F_i|_X$.\\
Put $X_{s_i}=X_{F_i}$ and $\dot X=X_{s_1}$.\\
Put $\dot q=(s_2/s^{d_{1,2}}_1,...,s_r/s^{d_{1,r}}_1): \dot X\to \mathbb A^{r-1}_B$;\\
For a Zariski open $S$ in $\mathbb A^{r-1}_B$ put $\dot X_S=\dot q^{-1}(S)$ and $\dot Z_S=Z\cap \dot X_S$.\\
Write $\dot q_S$ for $\dot q|_{\dot X_S}: \dot X_S\to S$.\\
Let $\mathbb P^{r,w}_{B}$ be the weighted projective space
with homogeneous coordinates $[t_0:t_1:...:t_m]$ of weights $1,d_{0,1},...,d_{0,m}$ respectively.
Write $\mathbb P^{r,w}$ for $\mathbb P^{r,w}_{B}$. \\
Let $\mathbb P^{r-1,w}_{B}$ be the weighted projective space
with homogeneous coordinates $[x_1:...:x_m]$ of weights $1,d_{1,2},...,d_{1,m}$ respectively.
Write $\mathbb P^{r-1,w}$ for $\mathbb P^{r-1,w}_{B}$. \\
Put $X_{s_i}=X_{F_i}$ and also consider $\mathbb P^{r,w}_{t_i}$ and $\mathbb P^{r-1,w}_{x_i}$.\\
Write $\dot X$ for $X_{s_1}$ and $\dot {\mathbb P}^{r,w}$ for $\mathbb P^{r,w}_{t_1}$.\\
Note that $\mathbb P^{r-1,w}_{x_1}$ is the affine space $\mathbb A^{r-1}_B$.\\
The identification is given by sending
$[x_1:x_2:...:x_r]$ to $(x_2/x^{d_{1,2}}_1,...,x_r/x^{d_{1,r}}_1)$.\\
Note that $\mathbb P^{r,w}_{t_0}$ is the affine space $\mathbb A^{r}_B$.\\
The identification is given by sending
$[t_0:t_1:...:t_r]$ to $(t_1/t^{d_{0,1}}_0,...,t_r/t^{d_{0,r}}_0)$.\\
$X_{0,1,...,r}=\emptyset$ by the choice of $f_0,...f_r\in S_{k(b),\mathrm{hom}}$.
\end{notation}

\begin{prop}\label{weighted_A}
Under the hypotheses of Theorem \ref{Main3} and Notation \ref{not:main_A} we know that $X_{0,1,...,r}=\emptyset$. Thus,
the morphism
\begin{equation}\label{w}
\pi=[s_0:s_1:...:s_r]: X\to \mathbb P^{r,w}_B
\end{equation}
is well-defined. We state that it is finite and surjective.
\end{prop}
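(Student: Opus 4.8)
The statement splits into three points: well-definedness, finiteness, and surjectivity. The first two I would settle exactly as in Proposition~\ref{weighted}. Well-definedness is immediate from $X_{0,1,\dots,r}=\emptyset$ (recorded in Notation~\ref{not:main_A} and coming from Theorem~\ref{Main3}): the sections $s_0,\dots,s_r$ have no common zero on $X$, so $[s_0:\dots:s_r]$ defines a morphism to $\mathbb P^{r,w}_B$. For finiteness, I would cover the target by its standard affine charts $\mathbb P^{r,w}_B=\bigcup_{i=0}^r \mathbb P^{r,w}_{t_i}$ and note that $\pi^{-1}(\mathbb P^{r,w}_{t_i})=X_{s_i}=X-H_{F_i}$. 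Each $X_{s_i}$ is the non-vanishing locus on the $B$-projective scheme $X$ of a section of the relatively ample line bundle $\mathcal O(e_i)$, hence affine; and each $\mathbb P^{r,w}_{t_i}$ is affine over $B$. Thus $\pi$ is an affine morphism. Since $X$ is closed in $\mathbb P^n_B$ while $\mathbb P^{r,w}_B$ is separated over $B$, the morphism $\pi$ is proper. A proper affine morphism of finite type is finite, which yields finiteness.

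The genuinely new point is surjectivity, and here I would argue fibrewise over $B$. First I would check that the structure morphism $X\to B$ is surjective: it is proper (as $X$ is $B$-projective), hence closed, and flat (as $X$ is $B$-smooth), hence open; its image is therefore a nonempty clopen subset of the irreducible scheme $B$, so it is all of $B$. Consequently every fibre $X_{b'}$, $b'\in B$, is nonempty and, by $B$-smoothness of relative dimension $r$, is smooth of pure dimension $r$ over $k(b')$. Base-changing $\pi$ along $b'\hookrightarrow B$ yields a finite morphism $\pi_{b'}\colon X_{b'}\to \mathbb P^{r,w}_{k(b')}$.

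It then remains to see that each $\pi_{b'}$ is surjective. Over the field $k(b')$ the weighted projective space $\mathbb P^{r,w}_{k(b')}$ is integral of dimension $r$. The morphism $\pi_{b'}$ is finite, hence closed with finite fibres, so $\pi_{b'}(X_{b'})$ is a closed subset of dimension $\dim X_{b'}=r$. A closed subset of full dimension $r$ in the irreducible $r$-dimensional $\mathbb P^{r,w}_{k(b')}$ must be the whole space, so $\pi_{b'}$ is surjective. Since the fibres $\mathbb P^{r,w}_{b'}$ exhaust $\mathbb P^{r,w}_B$, surjectivity of all the $\pi_{b'}$ forces surjectivity of $\pi$.

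In summary, the finiteness part is routine and mirrors Proposition~\ref{weighted} verbatim; the only real work is the surjectivity, whose crux is the observation that $X\to B$ has no empty fibres (the clopen-image argument), after which the fibrewise dimension count finishes the proof.
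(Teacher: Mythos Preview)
Your argument for well-definedness and finiteness is essentially identical to the paper's proof: cover $\mathbb P^{r,w}_B$ by the affine charts $\mathbb P^{r,w}_{t_i}$, note that $\pi^{-1}(\mathbb P^{r,w}_{t_i})=X_{s_i}$ is affine (complement of an ample hypersurface in a projective scheme), conclude that $\pi$ is affine; since $\pi$ is also projective, it is finite.

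The paper's proof actually stops there and does not address surjectivity at all, even though surjectivity is part of the assertion (and is used later, e.g.\ in the analogue of Lemma~\ref{flat_pi_0}). Your fibrewise argument fills this gap and is correct: the structure map $X\to B$ is flat and proper with $X$ nonempty and $B$ connected, hence surjective; each fibre $X_{b'}$ is then equidimensional of dimension $r$, the base-changed morphism $\pi_{b'}$ is finite, and its image is a closed subset of dimension $r$ in the irreducible $r$-dimensional scheme $\mathbb P^{r,w}_{k(b')}$, hence all of it. So your proof is more complete than the paper's own proof of this proposition.
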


\begin{proof}
One has $X=\cup^r_{i=0}X_{s_i}$, $\mathbb P^{r,w}=\cup^r_{i=0}\mathbb P^{r,w}_{t_i}$ and
$\pi^{-1}(\mathbb P^{r,w}_{t_i})=X_{s_i}$.
Since each $X_{s_i}$ is affine, the morphism $\pi$ is affine.
At the same time $\pi$ is projective. Thus, $\pi$ is finite.
\end{proof}

\begin{prop}\label{weighted2_A}
Under the hypotheses of Theorem \ref{Main3} and Notation \ref{not:main_A} let $M\subset X$ be a closed subscheme
such that $X_{1,...,r}\cap M=\emptyset$. Then the morphism $[s_1:...:s_r]: M\to \mathbb P^{r-1,w}_B$
is finite.
Particularly, the morphism
$[s_1:...:s_r]: Z\to \mathbb P^{r-1,w}_B$
is finite.
\end{prop}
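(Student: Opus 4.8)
The plan is to follow the same template as Proposition \ref{weighted_A} and its analogues Propositions \ref{weighted2} and \ref{non_weighted2}: I will argue that the morphism $\varphi := [s_1:\ldots:s_r]\colon M\to \mathbb P^{r-1,w}_B$ is simultaneously affine and proper, and then conclude it is finite, since a morphism is finite precisely when it is affine and proper.

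First I would check that $\varphi$ is everywhere defined on $M$. The sections $s_1,\ldots,s_r$ have degrees $e_1,\ldots,e_r$ with $d_{1,j}=e_j/e_1$, and both $s_1^{d_{1,j}}$ and $s_j$ lie in $\Gamma(X,\mathcal O(e_j))$, which is exactly the weight compatibility needed to land in the weighted projective space with weights $1,d_{1,2},\ldots,d_{1,r}$. The common vanishing locus of $s_1,\ldots,s_r$ on $X$ is $X_{1,\ldots,r}=\mathbb W$, so the hypothesis $X_{1,\ldots,r}\cap M=\emptyset$ guarantees that at each point of $M$ at least one $s_i$ is nonzero; hence $\varphi$ is a genuine morphism on all of $M$.

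Next I would establish affineness using the standard affine charts $\mathbb P^{r-1,w}_{x_i}=\mathbb P^{r-1,w}-\{x_i=0\}$, which cover $\mathbb P^{r-1,w}$. One has $\varphi^{-1}(\mathbb P^{r-1,w}_{x_i})=M\cap X_{s_i}=M\cap (X-X_i)$. Each $X_{s_i}=X_{F_i}$ is affine over $B$, being the nonvanishing locus of a section of the relatively ample line bundle $\mathcal O(e_i)$ on the $B$-projective scheme $X$; since $M$ is closed in $X$, the intersection $M\cap X_{s_i}$ is closed in the affine scheme $X_{s_i}$ and hence affine. As the hypothesis $X_{1,\ldots,r}\cap M=\emptyset$ ensures these preimages cover $M$, the morphism $\varphi$ is affine.

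For properness I would use that $M$, being closed in the $B$-projective scheme $X$, is itself $B$-projective, hence $B$-proper; as $\mathbb P^{r-1,w}_B$ is separated over $B$, the cancellation property for proper morphisms shows $\varphi$ is proper. Combining affine and proper yields that $\varphi$ is finite, proving the claim for $M$. The statement for $Z$ then follows at once, since $\mathbb W\cap Z=X_{1,\ldots,r}\cap Z=\emptyset$ by the choice in Notation \ref{not:main_A}, so $Z$ satisfies the hypothesis imposed on $M$. I do not expect a genuine obstacle here, as the argument is formal; the only point deserving care is the well-definedness of $\varphi$ into the weighted projective space in the presence of the differing degrees $e_i$, which is handled by the weight bookkeeping noted above.
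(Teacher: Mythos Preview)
Your proposal is correct and follows essentially the same approach as the paper, whose entire proof reads ``It is affine and projective.'' You have simply unpacked the details behind that sentence: well-definedness from $X_{1,\ldots,r}\cap M=\emptyset$, affineness via the chart cover $\mathbb P^{r-1,w}_{x_i}$ with preimages $M\cap X_{s_i}$, and properness (the paper says projective) from $M\hookrightarrow X$ being closed in a $B$-projective scheme together with the separatedness of the target.
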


\begin{proof}
It is affine and projective.
\end{proof}

The following result will be proven at the end of in Section \ref{blowups_A_finn}.
\begin{thm}\label{cor: w_el_fib_fin_A}
Under the hypotheses of Theorem \ref{Main3} and Notation \ref{not:main_A}
consider the affine $B$-scheme
$\dot X$ and the morphism
$\dot q: \dot X\to \mathbb A^{r-1}_B$.
Then there exists a Zariski open neighborhood $S\subset \mathbb A^{r-1}_B$ of the point $(0,b)$
such that
the morphism 
$\dot q_S: \dot X_S\to S$
is a weak elementary $\dot Z_S$-fibration.
%
\end{thm}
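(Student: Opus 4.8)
The plan is to mimic the finite-field argument of Section \ref{blowups} (culminating in Theorem \ref{w_elem_fib_1}), but now working relatively over the regular local base $B=\Spec(A)$ with finite residue field $k(b)$. The key point is that all the inputs needed for that argument have been arranged on the closed fibre $X_b$ by Theorem \ref{Main3} (applied to $X_b$, $\bf x$, $Z_b$), and then lifted to $X$ via the polynomials $F_i$ with $F_i \bmod m \equiv f_i$. So first I would record that the morphism $\pi=[s_0:\dots:s_r]\colon X\to \mathbb P^{r,w}_B$ is finite surjective by Proposition \ref{weighted_A}, and that the restrictions to $Z$ and to $\mathbb W=X_{1,\dots,r}$ are finite by Proposition \ref{weighted2_A}.

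Next I would carry out verbatim the blowup construction: form $\hat X\subset X\times_B \mathbb P^{r-1,w}$ and $\hat{\mathbb P}^{r,w}$ by the equations $s_j x_i^{d_{i,j}}=s_i^{d_{i,j}} x_j$, with the induced $\tau\colon \hat X\to X$, $\sigma\colon \hat{\mathbb P}^{r,w}\to \mathbb P^{r,w}$, and $\hat\pi\colon \hat X\to \hat{\mathbb P}^{r,w}$. The analogue of Corollary \ref{smooth} identifying $\hat{\mathbb P}^{r,w}_{x_1}$ with $\mathbb P^{1,w}\times \mathbb A^{r-1}_B$ goes through over $B$ since Lemma \ref{phi_psi} is a purely formula-level isomorphism; hence $\hat p\colon \hat{\mathbb P}^{r,w}_{x_1}\to \mathbb A^{r-1}_B$ is smooth projective with $\mathbb P^1$-fibres. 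Then I would establish the relative versions of Lemma \ref{flat_pi_0}, Lemma \ref{flat}, Proposition \ref{smooth2}, and the crucial Proposition \ref{smooth_fibre}: that $\hat q^{-1}(\{(0,b)\})$ is a smooth projective curve whose image under $\tau$ is the smooth curve $(X_{2,\dots,r})_b$ in $X_b$. This hands me $B$-smoothness of $\hat X_{x_1}$ in a neighborhood and fibrewise dimension-one smoothness over a Zariski neighborhood $S$ of $(0,b)$, which is the relative analogue of Corollary \ref{sm_proj}.

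The main obstacle, and the genuinely new ingredient over the finite-field case of Section \ref{blowups}, is \emph{relative smoothness and properness of $\hat q_S$ over $B$ rather than over a field}. The smoothness of the generic fibre $X_K$ is not arranged by Theorem \ref{Main3} directly (that theorem lives over the finite residue field $k(b)$), so I must use the openness of the smooth locus together with the properness of $\hat q$ to propagate the good behavior from the closed point $(0,b)$ to a whole open $S\subset \mathbb A^{r-1}_B$. Concretely, the fibre of $\hat q$ over $(0,b)$ is a smooth projective curve by the closed-fibre analysis, $\hat q$ is flat projective by the relative version of Lemma \ref{hat_q_fl_proj}, so the locus where $\hat q$ is smooth with one-dimensional fibres is open and contains $(0,b)$; shrinking gives $S$. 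I expect this propagation step, and keeping track of properness of the complementary section $\mathbb W_S = S\times_B \mathbb W$, to absorb most of the care.

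Finally, with $S$ chosen, I would set $\dot q_S=\hat q_S\circ j_S$ with $j$ the open embedding of Notation \ref{j} identifying $\dot X$ with $\hat X_{x_1}-(E_{\mathbb W})_{x_1}$, verify that $q_\infty=\hat q\circ i$ on $\mathbb W_S=S\times_B \mathbb W$ is finite étale (this uses that $\mathbb W\to B$ is finite étale, which comes from the closed-fibre smoothness of $\underline w$ together with the lift $F_i$), and that $\dot q_S$ factors as $pr_S\circ \pi$ with $\pi\colon \dot X_S\to \mathbb A^1_S$ finite flat surjective via Lemma \ref{flat}. The finiteness of $\dot q_S|_{\dot Z_S}$ follows from Proposition \ref{weighted2_A} exactly as in Lemma \ref{Z_finite_A}. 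Checking the four conditions (i)–(iv) of Definition \ref{DefnElemFib} against the diagram \eqref{SquareDiagram2} then completes the proof, so the bulk of the work is the relative smoothness propagation described above, after which everything reduces to citing the finite-field template.
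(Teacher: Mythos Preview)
Your proposal is correct and follows essentially the same route as the paper: Section~\ref{blowups_A_finn} carries out exactly the relative-over-$B$ version of the weighted-blowup argument from Section~\ref{blowups}, proving the relative analogues of Proposition~\ref{smooth2} and Lemma~\ref{hat_q_fl_proj} (packaged as Proposition~\ref{smooth2DVR}) and of Proposition~\ref{smooth_fibre} (as Lemma~\ref{smooth_fibreDVR}), then uses flat projectivity of $\hat q$ plus smoothness of the fibre over $(0,b)$ to find the open $S$, and checks finiteness of $\dot Z$ via the $\mathbb W\cap Z=\emptyset$ argument you describe. One minor remark: your worry about the generic fibre $X_K$ is unnecessary, since $X$ is $B$-smooth by hypothesis and the paper gets $B$-smoothness of $\hat X_{x_1}$ directly (Proposition~\ref{smooth2DVR}), so the propagation step is only about the \emph{fibres of $\hat q$}, not about smoothness over $B$.
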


\section{Proof of the finite $k(b)$-case of Theorem \ref{Very_General}}\label{blowups_A_finn}
The main aim of this section is to prove Theorem \ref{cor: w_el_fib_fin_A}.
We follow in this section Notation used in Section  \ref{sect_DVR-first}. Particularly, \\
Let $X\subset \mathbb P^n_B$ be an irreducible $B$-smooth closed subscheme of relative dimension $r\geq 1$.
Let ${\bf x}=\{x_1,...,x_l\}$ be a finite set of closed points in $X$.\\
We will write $\mathcal O(e)$ for $\mathcal O_{\mathbb P^n}(e)|_X$; \\
Let $Z\subset X$ be a closed subset such that $dim Z_b\leq r-1$.\\
For each $F\in S_{A,\mathrm{hom}}$ put $X_F=X-H_{F}$ in $X$; \\\\
Let $f_0,...f_r\in S_{k(b), \mathrm{hom}}$ be as in Theorem \ref{Main3} with respect to \\
the subscheme $X_b\subset \mathbb P^n_b$, its finite subset ${\bf x}$ and its closed subset $Z_b\subset X_b$. \\
Let $F_0,...,F_r\in S_{A,\mathrm{hom}}$ be such that $F_i \ mod \ m\equiv f_i$.\\
Put $X_i=X\cap H_{F_i}$ (the scheme intersection); it is a closed subscheme of $X$; \\
For any $I\subset \{0,1,...,r\}$ write $X_I$ for the scheme intesection $\cap_{i\in I}X_i$ in $X$.\\
Recall that $\mathbb W:=X_{1,...,r}$; \\ 
Recall also that $\mathbb W_b\cap Z_b=\emptyset$
and $\mathbb W\cap Z=\emptyset$ by the choice of $f_0,...f_r\in S_{k(b), \mathrm{hom}}$.\\
Recall also that $X_{0,1,...,r}=\emptyset$ by the choice of $f_0,...f_r\in S_{k(b), \mathrm{hom}}$.
\begin{notation}\label{notn:blowups_A_fin}
By the above assumptions on $f_0,...f_r\in S_{k(b), \mathrm{hom}}$ the scheme $\mathbb W$ is finite \'{e}tale over $B$. 
Since $X_{0,1,...,r}=\emptyset$ there is the well defined morphism
$$\pi=[s_0:...:s_r]: X\to \mathbb P^{r,w}_B.$$
as in Proposition \ref{weighted2_A}. Moreover, $\pi$ is
finite surjective by Proposition \ref{weighted2_A}.
\end{notation}

\begin{constr}
Define $\hat X$ as a closed subscheme of $X\times \mathbb P^{r-1,w}$
given by equations $s_jx^{d_{i,j}}_i=s^{d_{i,j}}_ix_j$ ($j>i$).
We regard $\hat X$ as a weighted blowup of $X$ at the subscheme $\mathbb W$.

If $X=\mathbb P^{r,w}$ is the weighted projective space as in Proposition \ref{weighted}
then $\hat {\mathbb P}^{r,w}$ is
a closed subscheme of $\mathbb P^{r,w}\times \mathbb P^{r-1,w}$
given by equations $t_jx^{d_{i,j}}_i=t^{d_{i,j}}_ix_j$.
We regard $\hat {\mathbb P}^{r,w}$ as a weighted blowup of $\mathbb P^{r,w}$ at the $B$-point $\mathbf 0:=[1:0:...:0]$.
\end{constr}

To state Theorem
\ref{el_fib_DVR_fin_k_v} we need more preparations.
Projections
$X\times \mathbb P^{r-1,w}\to \mathbb P^{r-1,w}$
and
$\mathbb P^{r,w}\times \mathbb P^{r-1,w}\to \mathbb P^{r-1,w}$
induces morphisms
$\hat q: \hat X\to \mathbb P^{r-1,w}$
and
$\hat p: \hat {\mathbb P}^{r,w}\to \mathbb P^{r-1,w}$
allowing to consider the schemes
$\hat X$ and $\hat {\mathbb P}^{r,w}$
as
$\mathbb P^{r-1,w}$-schemes.
The morphism
$\pi\times id: X\times \mathbb P^{r-1,w}\to \mathbb P^{r,w}\times \mathbb P^{r-1,w}$
induces a morphism
$$\hat \pi: \hat X\to \hat {\mathbb P}^{r,w}.$$
This is a morphism of the
$\mathbb P^{r-1,w}$-schemes.
Put
$\hat X_{x_1}=\hat q^{-1}(\mathbb P^{r-1,w}_{x_1})$
and
$\hat {\mathbb P}^{r,w}_{x_1}=\hat p^{-1}(\mathbb P^{r-1,w}_{x_1})$.
Note that
$\mathbb P^{r-1,w}_{x_1}$ is the affine space $\mathbb A^{r-1}_B$.
The identification is given by \\
$[x_1:x_2:...:x_r]\mapsto (x_2/x^{d_{1,2}}_1,...,x_r/x^{d_{1,r}}_1)$.
Thus, we have projective $B$-morphisms
$\hat q: \hat X_{x_1}\to \mathbb A^{r-1}_B$, $\hat p: \hat {\mathbb P}^{r,w}_{x_1}\to \mathbb A^{r-1}_B$
and
$\hat \pi: \hat X_{x_1}\to \hat {\mathbb P}^{r,w}_{x_1}$
with
$\hat q=\hat p\circ \hat \pi_{x_1}$.
\begin{notation}\label{tau_and_sigma_DVR}
The projections $X\times \mathbb P^{r-1,w}\to X$
and $\mathbb P^{r,w}\times \mathbb P^{r-1,w}\to \mathbb P^{r,w}$
induces morphisms
$\tau: \hat X\to X$
and
$\sigma: \hat {\mathbb P}^{r,w}\to \mathbb P^{r,w}$ respectively. \\
Write
$\mathbf 0$ for the closed subscheme $\{0=t_1=...=t_r\}$ in $\mathbb P^{r,w}_B$.\\
Put $E_{\mathbb W}:=\tau^{-1}(\mathbb W)$ and
$E_{\mathbf 0}:=\sigma^{-1}(\mathbf 0)$.\\
Note that $E_{\mathbb W}=\P^{r-1}\times_{B} \mathbb W$; \\
Write $\hat X\xleftarrow{i} E_{\mathbb W}=\P^{r-1}\times_B \mathbb W$  for the closed embedding.\\
Write $\dot X$ for $X_{s_1}$.\\
Let $Z\subset X$ be a closed subset with $Z\subset X-\mathbb W$.
Put $\dot Z=Z\cap \dot X$.
\end{notation}

\begin{notation}\label{j=tau -1_DVR}
Write $j: \dot X\hra \hat X_{x_1}$ for $\dot X\xrightarrow{\tau^{-1}} \hat X_{x_1}-(E_{\mathbb W})_{x_1}\hra \hat X_{x_1}$.\\
Clearly, $j|_{\dot Z}$ identifies $\dot Z$ with $\hat Z_{x_1}$; \\
Put $\dot q=\hat q\circ j: \dot X\to \mathbb A^{r-1}_B$; clearly, $\dot q=(s_2/s^{d_{1,2}}_1,...,s_r/s^{d_{1,r}}_1)$; \\
For an open $S$ in $\mathbb A^{r-1}_B$ put \\
$\dot X_S=\dot q^{-1}(S)$, $\hat X_S=\hat q^{-1}(S)$; $\dot Z_S=Z\cap \dot X_S$; $\hat Z_S=\hat Z\cap \hat X_S$; \\
clearly, $\dot X_S=j^{-1}(\hat X_S)$; \\
put $\dot q_{S}=\dot q|_{\dot X_S}: \dot X_S\to S$ and $\hat q_S=\hat q|_{\hat X_S}: \hat X_S\to S$ and $j_S=j|_{\dot X_S}: \dot X_S\hra \hat X_S$; \\
clearly, $j_S|_{\dot Z_S}$ identifies $\dot Z_S$ with $\hat Z_S$; \\
clearly, $\dot q_S=\hat q_S\circ j_S: \dot X_S\to S$; \\
put $\hat X_S\leftarrow S\times_{B} \mathbb W: \ i_S=i\times_{V} \mathbb W$; \\
clearly, $\hat q_S\circ i_S=pr_S: S\times_{B} \mathbb W \to S$.
\end{notation}

\begin{thm}\label{el_fib_DVR_fin_k_v}
Let $X\subset \Pro^n_B$ be a $B$-smooth closed subscheme, irreducible and of relative dimension $r\geq 1$,
${\bf x}\subset X$ be a finite set of closed points. 
Let $Z\subset X$ be a closed subset such that $\dim Z_b\leq r-1$.
Then under Notation of this section (particularly, Notation \ref{tau_and_sigma_DVR} and \ref{j=tau -1_DVR})
there exists a Zariski open neighborhood $S$ of the point $(0,b)\in \mathbb A^{r-1}_B$
such that the ${\bf x}$ is in $\dot X_S$ and the commutative diagram
\begin{equation}
\label{SquareDiagram_1}
    \xymatrix{
     \dot X_S\ar[drr]_{\dot q_S}\ar[rr]^{j_S}&&
\hat X_S\ar[d]^{\hat q_S}&&S\times_{B} \mathbb W \ar[ll]_{i_S}\ar[lld]^{pr_S} &\\
     && S  &\\    }
\end{equation}
is a diagram of the weak elementary $\dot Z_S$-fibration $\dot q_S: \dot X_S\to S$.
%
\end{thm}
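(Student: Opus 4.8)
The plan is to exhibit the open neighbourhood $S$ of $(0,b)$ and then to verify the four conditions of Definition \ref{DefnElemFib} for the diagram \eqref{SquareDiagram_1}, with $\bar q=\hat q_S$, $T_\infty=S\times_B\mathbb W$, $j=j_S$ and $q_\infty=pr_S$. The whole argument is the base-$B$ incarnation of the finite field computation carried out in Section \ref{blowups}: every structural statement proved there over the finite field $\mathbb F$ has a counterpart over the regular local base $B$, obtained by replacing ``smoothness over a field plus miracle flatness between regular schemes of equal dimension'' by its relative version over the regular base $B$, and ``smooth fibre'' dimension counts by the local criterion of flatness. So, much as in the proof of Theorem \ref{w_el_fib_inf_field}, I would first re-run this machinery to pin down the geometry of the weighted blowup over $B$, then isolate the closed fibre over $(0,b)$, and finally spread out.

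First I would reproduce the structural lemmas of Section \ref{blowups} relative to $B$. The isomorphisms $\phi,\psi$ of Lemma \ref{phi_psi} are defined over an arbitrary base, so they give $\hat{\mathbb P}^{r,w}_{x_1}\cong \mathbb P^1_B\times_B\mathbb A^{r-1}_B$ and hence, as in Corollary \ref{smooth}, that $\hat p$ is smooth projective over $\mathbb A^{r-1}_B$ with fibre $\mathbb P^1$. Since $X_{s_0}$ and $\mathbb A^r_B$ are regular (being $B$-smooth over the regular local $B$) and $\pi_0$ is finite surjective by Proposition \ref{weighted_A}, miracle flatness over the regular base $\mathbb A^r_B$ yields $\pi_0$ finite flat, the analogue of Lemma \ref{flat_pi_0}; the chart-by-chart argument of Lemma \ref{flat} then gives $\hat\pi_{x_1}$ finite flat, and composing with $\hat p$ gives $\hat q\colon\hat X_{x_1}\to\mathbb A^{r-1}_B$ flat projective (Lemma \ref{hat_q_fl_proj}). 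For the $B$-smoothness of $\hat X_{x_1}$ (Proposition \ref{smooth2}) I would use that $\mathbb W=X_{1,\dots,r}$ is finite \'etale over $B$, as recorded in Notation \ref{notn:blowups_A_fin}: its closed fibre $\mathbb W_b=(X_b)_{1,\dots,r}$ is $k(b)$-smooth of dimension zero by Theorem \ref{Main3}, flatness over $B$ follows from the local criterion since the fibre has the expected dimension $r-r=0$, and because $B$ is local the unramified locus, being open and containing the whole closed fibre, is everything. With $\mathbb W/B$ finite \'etale, $\pi_0$ is \'etale along the zero section, and $\hat X_{x_1}$ is $B$-smooth exactly as in Proposition \ref{smooth2}.

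Next I would analyse the closed fibre and spread out, which I expect to be \emph{the main obstacle}. Because $F_i\equiv f_i\pmod m$ and the $f_i$ satisfy Theorem \ref{Main3} for $X_b$, the restriction of the entire picture to $b$ is precisely the finite field situation of Section \ref{blowups} for $X_b$ over $k(b)$; in particular Proposition \ref{smooth_fibre} shows that the fibre $C:=\hat q^{-1}((0,b))$ is a smooth projective curve over $k(b)$, identified by $\tau$ with $(X_b)_{2,\dots,r}$. Now the locus in $\hat X_{x_1}$ where $\hat q$ is smooth of relative dimension one is open, and since $\hat q$ is proper its complement has closed image in $\mathbb A^{r-1}_B$; this image does not meet $(0,b)$ because the \emph{whole} fibre $C$ is smooth. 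As $B$ is local, I take $S$ to be the complement of that image, an open neighbourhood of $(0,b)$, and shrink it (using flatness of $\hat q$ and $\dim C=1$) so that all fibres stay equidimensional of dimension one; then $\hat q_S\colon\hat X_S\to S$ is smooth projective with one-dimensional fibres, which is condition (ii). The delicate point throughout is that smoothness must be controlled over the regular base $B$ and then propagated from the single closed point $(0,b)$ by properness, rather than over a field where dimension counts alone suffice.

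Finally I would verify the remaining conditions. Each $x\in{\bf x}$ lies in $X_i$ for $i\geq 2$ (property 3 of Theorem \ref{Main3}), so $s_i(x)=0$ while $s_1(x)\neq 0$, whence $\dot q(x)=(0,\dots,0)$ and ${\bf x}\subset\dot q^{-1}(0)\subset\dot X_S$ for free. Condition (iii) holds since $q_\infty=pr_S\colon S\times_B\mathbb W\to S$ is the base change of the finite \'etale $\mathbb W/B$, with non-empty fibres. For (i), $j_S$ is an open immersion with $\dot X_S=\hat X_S-(S\times_B\mathbb W)$, and $T_\infty=S\times_B\mathbb W$ is $S$-finite, hence meets each one-dimensional fibre of $\hat q_S$ in a non-empty finite set, so $\dot X_S$ is fibrewise dense. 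For (iv), note $\pi^{-1}(\mathbf 0)=\mathbb W$, so $\hat\pi_{x_1}$ carries $E_{\mathbb W}$ into $\{[1:0]\}\times\mathbb A^{r-1}$; restricting the finite flat surjective $\hat\pi_{x_1}$ over the open $\mathbb A^1_S=\mathbb P^1_S-\{[1:0]\}$ and discarding $S\times_B\mathbb W$ yields a finite flat surjective morphism $\pi\colon\dot X_S\to\mathbb A^1_S$ with $\dot q_S=pr_S\circ\pi$. The $S$-finiteness of $\dot Z_S$ follows from Proposition \ref{weighted2_A} applied to $M=Z$ (using $\mathbb W\cap Z=\emptyset$), exactly as in Lemma \ref{Z_finite_A}. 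This verifies all the conditions of Definition \ref{DefnElemFib}, so \eqref{SquareDiagram_1} is a diagram of the weak elementary $\dot Z_S$-fibration $\dot q_S\colon\dot X_S\to S$.
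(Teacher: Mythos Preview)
Your proposal is correct and follows essentially the same route as the paper's proof: both establish Proposition~\ref{smooth2DVR} and Lemma~\ref{smooth_fibreDVR} by rerunning the Section~\ref{blowups} arguments over $B$, then spread smoothness of $\hat q$ from the closed fibre $(0,b)$ by properness, and finally read off the weak elementary $\dot Z_S$-fibration structure from the resulting diagram together with Proposition~\ref{weighted2_A}. You are simply more explicit than the paper in several places---notably in justifying that $\mathbb W/B$ is finite \'etale (which the paper asserts in Notation~\ref{notn:blowups_A_fin} without argument), in spelling out the verification of conditions (i)--(iv) of Definition~\ref{DefnElemFib} (particularly (iv), which the paper leaves implicit), and in computing directly that $\dot q(x)=(0,b)$ for each $x\in{\bf x}$ rather than citing Lemma~\ref{smooth_fibreDVR}. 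One small redundancy: in your verification of (iv), once you restrict $\hat\pi_{x_1}$ over $\mathbb A^1_S$, the preimage is already $\dot X_S$ (since $\hat\pi_{x_1}^{-1}((E_0)_{x_1})=(E_{\mathbb W})_{x_1}$ from $\sigma\circ\hat\pi=\pi\circ\tau$), so there is nothing further to discard.
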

Clearly, Theorem \ref{el_fib_DVR_fin_k_v} yields the following
\begin{cor}\label{cor: w_el_fib_inf_DVR}[= Theorem \ref{cor: w_el_fib_fin_A}]
Under the hypotheses and notation of Theorem \ref{el_fib_DVR_fin_k_v}
let $S$ be the open neighborhood in $\mathbb A^{r-1}_B$ of the point
$(0,b)\in \mathbb A^{r-1}_B$
as in Theorem \ref{el_fib_DVR_fin_k_v}.
Then the morphism
$\dot q_S: \dot X_S\to S$
is a weak elementary $\dot Z_S$-fibration.
%
\end{cor}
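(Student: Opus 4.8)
The plan is to rerun the weighted blow-up argument of Section~\ref{blowups} essentially verbatim, now over the regular local base $B$ in place of a field, and to extract the one genuinely new ingredient---fibrewise smoothness over the closed point---from Theorem~\ref{Main3} applied to the closed fibre $X_b$ together with the lifts $F_i$ of the $f_i$. All of the relevant objects ($\pi$, $\hat X$, $\hat{\mathbb P}^{r,w}$, $\hat q$, $\hat p$, $\hat\pi$, $\tau$, $\sigma$, $E_{\mathbb W}$, $E_{\mathbf 0}$, $j$, $i$) have already been built over $B$ in Notation~\ref{tau_and_sigma_DVR} and~\ref{j=tau -1_DVR}, so what remains is to re-establish the smoothness, flatness, and finiteness assertions of Section~\ref{blowups} in the relative setting and to verify the four conditions of Definition~\ref{DefnElemFib} for \eqref{SquareDiagram_1}.

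First I would record the local structure of $\hat p$. The coordinate computation of Lemma~\ref{phi_psi} is purely formal and holds verbatim over $B$, yielding an isomorphism of $\mathbb A^{r-1}_B$-schemes $\hat{\mathbb P}^{r,w}_{x_1}\cong \mathbb P^1\times_B \mathbb A^{r-1}_B$; hence the analogue of Corollary~\ref{smooth} gives that $\hat p\colon \hat{\mathbb P}^{r,w}_{x_1}\to\mathbb A^{r-1}_B$ is smooth projective with fibre $\mathbb P^1$. Next I would prove the flatness statements, the only substantive change being that every appeal to ``both schemes are smooth, hence the finite morphism is flat'' becomes an appeal to miracle flatness over the regular base: the sources $X_{s_i}$, $\dot X$ and $\hat X_{x_1}-(E_{\mathbb W})_{x_1}$ are $B$-smooth, hence Cohen--Macaulay, while $\mathbb A^r_B$, $\mathbb P^{r,w}_B$ and the charts of $\hat{\mathbb P}^{r,w}_{x_1}$ are regular, and a finite surjective morphism from a Cohen--Macaulay scheme to a regular one is flat. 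This yields the analogues of Lemma~\ref{flat_pi_0} ($\pi_0\colon X_{s_0}\to\mathbb A^r_B$ finite flat) and Lemma~\ref{flat} ($\hat\pi_{x_1}$ finite flat), and then $\hat q=\hat p\circ\hat\pi_{x_1}$ is flat projective as in Lemma~\ref{hat_q_fl_proj}. I note that flatness of $\hat q$ is obtained this way without needing global smoothness of $\hat X_{x_1}$.

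Then I would dispose of the easy conditions of Definition~\ref{DefnElemFib}. By the choice of the $f_i$ via Theorem~\ref{Main3}, the scheme $\mathbb W=X_{1,\dots,r}$ is finite \'etale over $B$; being a non-empty finite flat cover of the connected local scheme $B$ it is surjective, so $pr_S\colon S\times_B\mathbb W\to S$ is finite \'etale with non-empty fibres, which is~(iii). The analogue of Proposition~\ref{X_s_1} (the isomorphism $\tau_1$) identifies $j$ with the open immersion whose complement is the $S$-finite section $S\times_B\mathbb W$; since the fibres of $\hat q_S$ will be one-dimensional and this complement is fibrewise $0$-dimensional, $j_S$ is dense at each fibre, giving~(i). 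For~(iv) I would use that $\hat\pi^{-1}(E_{\mathbf 0})=E_{\mathbb W}$ (because $\sigma\circ\hat\pi=\pi\circ\tau$ and $\pi^{-1}(\mathbf 0)=\mathbb W$), so the finite flat surjection $\hat\pi_{x_1}$ restricts to a finite flat surjection $\dot X_S\cong \hat X_S-(E_{\mathbb W})_{x_1}\to \mathbb A^1_S$ with $pr_S\circ\pi=\dot q_S$. Finally Proposition~\ref{weighted2_A}, restricted to the chart $\mathbb P^{r-1,w}_{x_1}=\mathbb A^{r-1}_B$, gives the analogue of Lemma~\ref{Z_finite_A}: $\dot q|_{\dot Z}\colon\dot Z\to\mathbb A^{r-1}_B$ is affine and projective, hence finite. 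Moreover, since every closed point of the proper $B$-scheme $X$ lies over $b$, and the $f_i$ were chosen so that $s_1(\mathbf x)\neq 0$ while $s_i(\mathbf x)=0$ for $i\geq 2$, we have $\mathbf x\subset\dot X$ with $\dot q(\mathbf x)=\{(0,b)\}$; thus $\mathbf x\subset\dot X_S$ for any neighbourhood $S$ of $(0,b)$.

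The one genuinely new point---and the step I expect to be the main obstacle---is condition~(ii), the smooth projectivity of $\hat q_S$. Over a field this came directly from Bertini, but here $B$ is not a field, so I would argue fibrewise over the closed point and then spread out. Running the fibre analysis of Proposition~\ref{smooth_fibre} over the residue field $k(b)$, the fibre $C=\hat q^{-1}(\{(0,b)\})$ is identified by $\tau$ with the closed subscheme $(X_b)_{2,\dots,r}$ of $X_b$, which by Theorem~\ref{Main3} applied to $X_b\subset\mathbb P^n_b$ is a smooth projective curve over $k(b)$. Combined with the flatness of $\hat q$ already established, this makes $\hat q$ smooth at every point of $C$. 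Since $\hat q$ is proper, the image of its (closed) non-smooth locus is closed in $\mathbb A^{r-1}_B$ and misses $(0,b)$; taking $S$ to be its complement produces an open neighbourhood of $(0,b)$ over which $\hat q_S$ is smooth projective with one-dimensional fibres---the analogue of Corollary~\ref{sm_proj}. Assembling (i)--(iv) together with the finiteness of $\dot q_S|_{\dot Z_S}$ exactly as in Theorem~\ref{w_elem_fib_1} then shows that \eqref{SquareDiagram_1} is a diagram of a weak elementary $\dot Z_S$-fibration. The subtlety to keep in mind throughout is that ``near the closed fibre'' must be read through properness of $\hat q$: it is precisely properness that converts fibrewise smoothness over the single point $(0,b)$ into smoothness over a Zariski-open $S$, and the same local-base phenomenon is what lets one neighbourhood $S$ simultaneously capture all of $\mathbf x$.
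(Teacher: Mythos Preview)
Your proposal is correct and follows essentially the same route as the paper's proof of Theorem~\ref{el_fib_DVR_fin_k_v}, from which the Corollary is stated as an immediate consequence: both rerun the weighted blow-up argument of Section~\ref{blowups} over $B$, establish flatness of $\hat q$ via miracle flatness, identify the closed fibre $\hat q^{-1}((0,b))$ with the Bertini curve $(X_b)_{2,\ldots,r}$ from Theorem~\ref{Main3}, and spread out using properness of $\hat q$. You supply more detail than the paper does (notably for condition~(iv) of Definition~\ref{DefnElemFib} and the explicit invocation of miracle flatness in place of the paper's appeal to Proposition~\ref{smooth2DVR}), but the overall structure is the same.
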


\begin{proof}[Proof of Theorem \ref{el_fib_DVR_fin_k_v}]
We begin with the following result which can be proved following arguments as in Section \ref{blowups}
(Proposition \ref{smooth2} and Lemma \ref{hat_q_fl_proj}).
\begin{prop}\label{smooth2DVR}
The scheme $\hat X_{x_1}$ is $B$-smooth.
The morphism $\hat q: \hat X_{x_1}\to \mathbb A^{r-1}_B$ is flat projective.
\end{prop}
Now using Notation \ref{j=tau -1_DVR} and
arguing as in Section \ref{blowups} construct a commutative diagram of the form
in the category of $B$-schemes
\begin{equation}
\label{SquareDiagramDVR}
    \xymatrix{
     \dot X\ar[drr]_{\dot q}\ar[rr]^{j}&&
\hat X_{x_1}\ar[d]_{\hat q}&&\mathbb A^{r-1}_{\mathbb W}\ar[ll]_{i}\ar[lld]^{\mathrm{pr}_{\mathbb A^{r-1}_B}} &\\
     && \mathbb A^{r-1}_B  &\\    }
\end{equation}
The $B$-scheme $\hat X_{x_1}$ is $B$-smooth.
The morphism $\hat q$ is flat projective. The morphism $\mathrm{pr}_{\mathbb A^{r-1}_B}$ is finite \'{e}tale.
The morphism
$i$ is a closed embedding, the morphism $j$ is an open embedding identifying $\dot X$ with
$\hat X_{x_1}-i(\mathbb A^{r-1}_i(\mathbb W))$. The ${\bf x}$ is contained in $\dot X$.
For the above closed subset $Z\subset X$ one has $\dot Z\subset \dot X$.
Prove that $\dot Z$ is finite over $\mathbb A^{r-1}_B$.

By the hypotheses of Theorem \ref{el_fib_DVR_fin_k_v} one has $\mathbb W_b\cap Z_b=\emptyset$.
Thus $\mathbb W\cap Z=\emptyset$.
It follows that the morphism $[s_1:...:s_r]: Z\to \mathbb P^{r-1,w}_B$ is well-defined.
It is affine and projective. Hence it is finite. This shows that
$\dot Z$ is finite over $\mathbb A^{r-1}_B$.
The following lemma is a straight forward consequence of Proposition \ref{smooth_fibre}.
\begin{lem}\label{smooth_fibreDVR}
Consider the scheme $C={\hat q}^{-1}((0,b))$. Then \\
1) $C$ is a $k(b)$-smooth projective curve, which is a closed subscheme of $\hat X_b$; \\
2) $\tau_b|_C: C\to X_b$ is a closed embedding; \\
3) $\tau_b(C)$ coincides with the smooth closed dimension one subscheme $(X_b)_{2,...,r}$ of $X_b$.
\end{lem}
By Proposition \ref{smooth2DVR} the $B$-scheme $\hat X_{x_1}$ is $B$-smooth and
the $\hat q: \hat X_{x_1}\to \mathbb A^{r-1}_B$ is flat projective.
Now Lemma \ref{smooth_fibreDVR} shows that
there exists a Zariski open neighborhood $S$ of the point $(0,b)\in \mathbb A^{r-1}_B$
such that for $\hat X_S:=\hat q^{-1}(S)$ the morphism $\hat q: \hat X_S\to S$ is smooth projective.
Take the base change of the diagram \eqref{SquareDiagramDVR} by means of the open embedding $S\hra \mathbb A^{r-1}_B$
\begin{equation}
\label{SquareDiagram2DVR}
    \xymatrix{
     \dot X_S\ar[drr]_{\dot q_S}\ar[rr]^{j_S}&&
\hat X_S\ar[d]_{\hat q_S}&&S\times_B {\mathbb W}\ar[ll]_{i_S}\ar[lld]^{pr_S} &\\
     && S  &\\    }
\end{equation}
This diagram shows that the morphism $\dot q_S: \dot X_S\to S$ is a weak elementary fibration. By Lemma \ref{smooth_fibreDVR} one has inclusions
${\bf x}\subset \dot q^{-1}_S((0,b)))\subset \dot X_S$. By Corollary \ref{cor: w_el_fib_inf_DVR}
for $\dot Z_S=Z\cap \dot X_S$
the morphism
$\dot q_S|_{\dot Z_S}: \dot Z_S\to S$ is finite.
Hence the morphism $\dot q_S: \dot X_S\to S$ is a weak elementary $\dot Z_S$-fibration.
Theorem \ref{el_fib_DVR_fin_k_v} is proved.
\end{proof}

\section{The case of an infinite field $k(b)$ }\label{sect_DVR-second}
The main aim of this section is to state Theorem \ref{thm_Artin_2_A}.\\
$A$ is a regular local ring, $m\subseteq A$ is its maximal ideal;\\
$K$ is the field of fractions of $A$;
$B=\mathrm{Spec}(A)$ and $b\in B$ is its closed point;\\
$k(b)$ is the residue field $A/m$; {\bf it is infinite} in this section;\\
$p>0$ is the characteristic of the field $k(b)$; \\
it is supposed in this notes that the field $K$ has characteritic zero;\\
$m\geq 1$ is an integer;\\
For an $B$-scheme $Y$ we write $Y_b$ for the closed fibre of the structure morphism $Y\to B$.\\
For an $B$-scheme morphism $q: Y'\to Y$ write $q_b: Y'_b\to Y_b$ for the morphism $q\times_B b$;\\
All schemes in this section are $B$-schemes and we write $Y\times Y'$ for $Y\times_B Y'$.
\begin{notation}\label{sect:2_DVR_2}
Let $\mathbb P^n:=\mathbb P^n_{B}$ be the projective space over $B$. \\
Let $S_A = A[T_0,...,T_n]$ be the homogeneous coordinate ring of $\mathbb P^n_B$.\\
Let $S_{A,d}\subset S_A$ be the $A$-submodule of homogeneous polynomials of degree $d$.\\
Let $F\in S_{A,d}$ be a degree $d$ homogeneous polynomial. \\
Let $H_{F}\subset \mathbb P^n_B$ be the closed subscheme of $\mathbb P^n_B$ defined by the homogeneous ideal $(F)\subset S_A$.\\
Let $S_{k(b)} = k(b)[T_0,...,T_n]$ be the homogeneous coordinate ring of $\mathbb P^n_b$.\\
Let $S_{k(b),d}\subset S_{k(b)}$ be the $k(b)$-subspace of homogeneous polynomials of degree $d$.\\
Let $X\subset \mathbb P^n_B$ be a $B$-smooth closed subscheme, irreducible and of relative dimension $r$.\\
Let ${\bf x}\subset X$ be a finite set of closed points.\\
Let $Z\subset X$ be a closed subset with $\dim Z_b < r$.\\
For each $F\in S_{A,d}$ put $X_F=X-H_{F}$ in $X$; \\\\
Let $d\gg 0$,
{\bf the generic} dimension $r$ vector $k(b)$-subspace $W$ in $S_{k(b),d}$
and
{\bf the generic} free $k(b)$-basis
${\bf f}=\{f_1,\dots,f_{r}\}$ of $W$
be as in Theorem \ref{GeneralSection_bar k} with respect to \\
the subscheme $X_b\subset \mathbb P^n_b$, its finite closed subset ${\bf x}$ and the closed subset $Z_b\subset X_b$. \\
For each the $f_i\in {\bf f}$ write $F_i\in S_{A,d}$ for a lift of $f_i$ to an element of $S_{A,d}$.\\
Let $X_i$ be the closed subscheme of $X$ equals $H_{f_i}\cap X$ (the scheme intersection). \\
For any $I\subset \{0,1,...,r\}$ write $X_I$ for the scheme intesection $\cap_{i\in I}X_i$ in $X$.\\
Write $\mathbb W=X_{1,...,r}$ for the closed subscheme in $X$.\\
The closed subscheme $\mathbb W_b\subset X_b$ depends only of $W$. \\
Write $\mathcal O(d)$ for $\mathcal O_{\mathbb P^n}(d)|_X$.\\
For the $f_i$ write $s_i\in \Gamma(X,\mathcal O(d))$ for the $F_i|_X$.\\
Put $X_{s_i}=X_{F_i}$ and $\dot X=X_{s_1}$.\\
Let $[x_1:...:x_r]$ be the homogeneous coordinates on $\mathbb P^{r-1}=\mathbb P^{r-1}_B$.
Put $\mathbb P^{r-1}_{x_i}=(\mathbb P^{r-1}_B)_{x_i}$.
Clearly, $\mathbb P^{r-1}_{x_1}\cong \mathbb A^{r-1}_B$.
Just identify
$[x_1:x_2:...:x_r]$ with $(x_2/x_1,...,x_r/x_1)$.\\
For the $W\subset S_{k(b),d}$ above and its the free $k(b)$-basis ${\bf f}=\{f_1,...,f_r\}$ put \\
$\dot q=(s_2/s_1,...,s_r/s_1): \dot X\to \mathbb A^{r-1}_B$.\\
For a Zariski open $S$ in $\mathbb A^{r-1}_B$ put $\dot X_S=\dot q^{-1}(S)$ and $\dot Z_S=Z\cap \dot X_S$.\\
Put $\dot q_S=\dot q|_{\dot X_S}: \dot X_S\to S$.
\end{notation}
The following partial result will be proven at the end of in Section \ref{blowups_A_inf_A}.
\begin{thm}\label{thm_Artin_2_A}
Under Notation \ref{sect:2_DVR_2} and the hypotheses of Theorem \ref{GeneralSection_bar k}
for each integer $d\gg 0$,
for {\bf the generic} dimension $r$ vector $k(b)$-subspace $W$ in $S_{k(b),d}$
and {\bf the generic} free $k(b)$-basis
${\bf f}=\{f_1,\dots,f_{r}\}$ of $W$
the following is true. \\
For the affine variety $\dot X$ and the morphism
$\dot q: \dot X\to \mathbb A^{r-1}_B$
there exists a Zariski open neighborhood $S\subset \mathbb A^{r-1}_B$ of the
finite set $\dot q({\bf x})$
such that the morphism \\
$\dot q_S: \dot X_S\to S$ is a weak elementary $\dot Z_S$-fibration.
\end{thm}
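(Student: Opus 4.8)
The plan is to run the argument of Theorem~\ref{el_fib_DVR_fin_k_v} (the finite residue field case over $B$) essentially verbatim, changing only two ingredients. First, the finite-field Bertini theorem~\ref{Main3} is replaced by the infinite-field Bertini theorem~\ref{GeneralSection_bar k}, in the form of Corollary~\ref{GeneralSection_inf k}, applied to the closed fibre $X_b$. Second, the weighted data $(e_0,\dots,e_r)$ is taken to be $(1,\dots,1)$, so that every weighted projective space and weighted blowup degenerates to its ordinary counterpart $\mathbb P^r_B$, $\mathbb P^{r-1}_B$ and the ordinary blowup of $X$ along $\mathbb W=X_{1,\dots,r}$, exactly as in Section~\ref{usual_blowups}. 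In this way the proof becomes a fusion of the field-theoretic infinite case (Corollary~\ref{cor: w_el_fib_inf_field}) with the lifting-to-$A$ mechanism of Theorem~\ref{el_fib_DVR_fin_k_v}.

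Concretely, I would first apply Corollary~\ref{GeneralSection_inf k} to $X_b\subset\mathbb P^n_b$, the finite set ${\bf x}$ and the closed subset $Z_b$, obtaining for $d\gg0$ a generic $W\subset S_{k(b),d}$ and a generic basis ${\bf f}=\{f_1,\dots,f_r\}$ such that on the closed fibre $\mathbb W_b$ is $k(b)$-smooth of dimension zero, ${\bf x}\subset X_{s_1}\cap X_{s_2}$, each curve cut out by $\{s_1(x)s_i=s_i(x)s_1\}$ over a point $\dot q(x)$ is smooth of dimension one, and $\mathbb W_b\cap Z_b=\emptyset$. Lifting the $f_i$ to $F_i\in S_{A,d}$ and putting $s_i=F_i|_X$, the morphism $\pi=[s_0:\dots:s_r]\colon X\to\mathbb P^r_B$ is finite surjective, and I form the ordinary blowup $\hat X\subset X\times\mathbb P^{r-1}_B$ along $\mathbb W$, with $\hat q\colon\hat X_{x_1}\to\mathbb A^{r-1}_B$ and $j\colon\dot X\hookrightarrow\hat X_{x_1}$ as in the weight-free analogues of the Notation of Section~\ref{blowups_A_finn}.

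The next step is to promote these closed-fibre data to statements over all of $B$. Exactly as in Proposition~\ref{smooth2DVR} (whose proof follows Proposition~\ref{smooth2} and Lemma~\ref{hat_q_fl_proj}) I would show that $\mathbb W\to B$ is finite \'etale, that $\hat X_{x_1}$ is $B$-smooth, and that $\hat q$ is flat projective; here one uses that $\pi_0\colon X_{s_0}\to\mathbb A^r_B$ is finite flat and \'etale along $\mathbb W=\pi_0^{-1}(\mathbf 0)$, together with the $B$-smoothness of $\hat{\mathbb P}^r_{x_1}\cong\mathbb P^1\times\mathbb A^{r-1}_B$. Then, as in Lemma~\ref{smooth_fibreDVR} (a consequence of Proposition~\ref{smooth_fibre}), each fibre $\hat q^{-1}(\dot q(x),b)$ is identified via $\tau$ with the smooth one-dimensional subscheme of $X_b$ furnished by Corollary~\ref{GeneralSection_inf k}(iv). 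Since the smooth locus of the proper flat morphism $\hat q$ is open and its complement maps under $\hat q$ to a closed subset of $\mathbb A^{r-1}_B$ avoiding the finitely many closed points $\dot q({\bf x})$, I take $S$ to be the complement of that closed set: an open neighborhood of $\dot q({\bf x})$ over which $\hat q_S\colon\hat X_S\to S$ is smooth projective with one-dimensional fibres, and ${\bf x}\subset\dot X_S$. Base changing the construction along $S\hookrightarrow\mathbb A^{r-1}_B$ (the analogue of \eqref{SquareDiagram2DVR}) gives condition (ii) of Definition~\ref{DefnElemFib}; the section at infinity $\mathrm{pr}_S\colon S\times_B\mathbb W\to S$ is finite \'etale because $\mathbb W\to B$ is, giving (iii); condition (iv) comes from the finiteness of $\pi$, which makes $\dot X_S\to\mathbb A^1_S$ finite flat surjective; and $\dot Z_S\to S$ is finite because $\mathbb W\cap Z=\emptyset$ forces $[s_1:\dots:s_r]\colon Z\to\mathbb P^{r-1}_B$ to be affine and projective, hence finite (cf. Proposition~\ref{non_weighted2}). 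Thus $\dot q_S$ is a weak elementary $\dot Z_S$-fibration.

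The main obstacle is the passage from the closed fibre to $B$ in the previous paragraph, and above all the $B$-smoothness of the blowup $\hat X_{x_1}$: the Bertini genericity is only available over the infinite field $k(b)$, so it directly controls only $X_b$, and one must propagate smoothness and \'etaleness across the whole base. The mechanism that makes this work is that $\Spec A$ is local, so every nonempty closed subset contains $b$; combined with the properness of $\pi$, of $\mathbb W\to B$ and of $\hat q$, and with the openness of the smooth and \'etale loci, each condition verified on the closed fibre automatically spreads to all of $B$, respectively to the open $S\subset\mathbb A^{r-1}_B$. Once $\hat X_{x_1}$ is known to be $B$-smooth and $\hat q$ flat projective, the remaining verifications are routine and identical to the field case.
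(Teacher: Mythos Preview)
Your proposal is correct and follows essentially the same route as the paper. The paper's proof of Theorem~\ref{el_fib_DVR_inf_k_v} (which is equated with Theorem~\ref{thm_Artin_2_A} via Corollary~\ref{cor: w_el_fib_inf_field_DVR_2}) literally says: take $(e_0,\dots,e_r)=(1,\dots,1)$, repeat the arguments of Lemma~\ref{hat_q_fl_proj} and Proposition~\ref{smooth2} to get $\hat q$ flat projective and $\hat X_{x_1}$ $B$-smooth, then use the Bertini input on the closed fibre (Corollary~\ref{GeneralSection_inf k}) to find the open $S$ over which $\hat q_S$ is smooth projective. Your write-up is in fact more explicit than the paper about why the closed-fibre information propagates to all of $B$ (locality of $B$, openness of the smooth and \'etale loci, properness), and about the verification of condition~(iv) of Definition~\ref{DefnElemFib} and the finiteness of $\dot Z_S\to S$, but the strategy is identical.
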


\begin{prop}\label{non_weighted_A}
Under the Notation \ref{sect:2_DVR_2}
the morphism
\begin{equation}\label{w_A}
\pi=[s_0:s_1:...:s_r]: X\to \mathbb P^{r}_B
\end{equation}
is well-defined and finite.
\end{prop}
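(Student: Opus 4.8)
The plan is to transcribe, in the relative setting over $B$, the argument already used for the field-case analogues in Propositions \ref{non_weighted} and \ref{weighted_A}: a morphism to projective space given by sections with empty base locus is automatically affine on the standard charts, and an affine proper morphism is finite. The one point that does not formally reduce to the field case is the verification that the base locus $X_{0,1,\dots,r}=X_0\cap\dots\cap X_r$ is empty over \emph{all} of $B$, since the Bertini-type Theorem \ref{GeneralSection_bar k} is applied only to the closed fibre $X_b$.

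First I would settle well-definedness. Writing $s_0=F_0|_X$ for the restriction of the lift of the polynomial $f_0$ supplied by Theorem \ref{GeneralSection_bar k}, the tuple $[s_0:s_1:\dots:s_r]$ defines a morphism precisely on $X-X_{0,1,\dots,r}$, so it suffices to prove $X_{0,1,\dots,r}=\emptyset$. Condition (i) of Theorem \ref{GeneralSection_bar k}, applied to $X_b$, gives $\mathbb W_b\cap (X_0)_b=\emptyset$, that is $(X_b)_{0,1,\dots,r}=\emptyset$; thus the closed subscheme $X_{0,1,\dots,r}\subset X$ has empty closed fibre. Since $X_{0,1,\dots,r}$ is closed in the projective $B$-scheme $X$, it is proper over $B=\mathrm{Spec}(A)$, so its image in $B$ is closed and avoids the closed point $b$. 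As every nonempty closed subset of the spectrum of a local ring contains the closed point, this image is empty, whence $X_{0,1,\dots,r}=\emptyset$ and $\pi$ is defined on all of $X$.

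With the base locus empty, finiteness follows exactly as before. The sections have no common zero, so $X=\bigcup_{i=0}^{r}X_{s_i}$, while $\mathbb P^r_B=\bigcup_{i=0}^{r}\mathbb P^r_{t_i}$ is the usual affine cover and $\pi^{-1}(\mathbb P^r_{t_i})=X_{s_i}$. Each $X_{s_i}=X-X_i$ is the complement in the projective $B$-scheme $X$ of a hypersurface section, hence affine, so $\pi$ is an affine morphism. At the same time $X$ is proper over $B$ and $\mathbb P^r_B$ is separated over $B$, so $\pi$ is proper. A morphism that is both affine and proper is finite, which gives the claim.

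I expect the substantive step to be the propagation of the emptiness of the base locus from $X_b$ to $X$ via properness over the local base; the affine-plus-proper finiteness argument is then a verbatim repetition of the field-case proofs.
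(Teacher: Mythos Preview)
Your proof is correct and follows the same route as the paper's: the covering $X=\bigcup_i X_{s_i}$ forces $\pi$ to be affine, and together with properness (the paper says projective) this gives finiteness. You actually supply the step the paper leaves implicit, namely that $X_{0,1,\dots,r}=\emptyset$ over all of $B$: the paper simply asserts $X=\bigcup_i X_{s_i}$, whereas you derive it from emptiness on the closed fibre plus properness over the local base, which is exactly what is needed.
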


\begin{proof}
One has $X=\cup^r_{i=0}X_{s_i}$, $\mathbb P^{r}=\cup^r_{i=0}\mathbb P^{r}_{t_i}$ and
$\pi^{-1}(\mathbb P^{r}_{t_i})=X_{s_i}$.
Since each $X_{s_i}$ is affine, the morphism $\pi$ is affine.
At the same time $\pi$ is projective. Thus, $\pi$ is finite.
\end{proof}

\begin{prop}\label{non_weighted2_A}
Under the hypotheses of Theorem \ref{thm_Artin_2_A} and Notation \ref{sect:2_DVR_2} let $M\subset X$ be a closed subscheme
such that $\mathbb W\cap M=\emptyset$. Then the morphism $[s_1:...:s_r]: M\to \mathbb P^{r-1}_B$
is finite.
Particularly, the morphism
$[s_1:...:s_r]: Z\to \mathbb P^{r-1}_B$
is finite.
\end{prop}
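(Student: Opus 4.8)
The plan is to show the morphism is simultaneously affine and proper, whence finite by the standard fact that a proper affine morphism is finite (\emph{cf.}\ the one-line proofs of Propositions \ref{non_weighted2} and \ref{weighted2_A}). First I would check that $\varphi := [s_1:\ldots:s_r]$ is genuinely defined on all of $M$: the base locus of the system $\langle s_1,\ldots,s_r\rangle$ on $X$ is exactly the common zero scheme $\{s_1=\cdots=s_r=0\}=X_1\cap\cdots\cap X_r=\mathbb{W}$, so the hypothesis $\mathbb{W}\cap M=\emptyset$ gives $M\subseteq X-\mathbb{W}$ and $\varphi|_M$ is a morphism.

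For affineness I would use the standard chart cover $\mathbb{P}^{r-1}_B=\bigcup_{i=1}^r \mathbb{P}^{r-1}_{x_i}$, where each $\mathbb{P}^{r-1}_{x_i}\cong\mathbb{A}^{r-1}_B$ is affine over the affine base $B=\mathrm{Spec}(A)$. By construction $\varphi^{-1}(\mathbb{P}^{r-1}_{x_i})=M\cap X_{s_i}$, the locus in $M$ where $s_i\neq 0$. Since $X$ is $B$-projective and $\mathcal{O}(d)=\mathcal{O}_{\mathbb{P}^n}(d)|_X$ is ample, the complement $X_{s_i}$ of the zero scheme of the section $s_i$ is affine over $B$; hence its closed subscheme $M\cap X_{s_i}$ is affine as well. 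As $\{\mathbb{P}^{r-1}_{x_i}\}_{i=1}^r$ is an affine cover of the target with affine preimages, $\varphi|_M$ is affine.

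For properness I would note that $M$ is closed in the $B$-projective scheme $X$ and hence proper over $B$, while $\mathbb{P}^{r-1}_B$ is separated over $B$; thus the $B$-morphism $\varphi|_M$ is proper, and being also affine it is finite. For the final assertion I take $M=Z$, for which one must know $\mathbb{W}\cap Z=\emptyset$: the generic choice of ${\bf f}$ via Theorem \ref{GeneralSection_bar k} yields $\mathbb{W}_b\cap Z_b=\emptyset$ on the closed fibre, and since $\mathbb{W}\cap Z$ is closed in the $B$-proper scheme $X$ it is proper over the \emph{local} base $B$ with empty fibre over $b$; its closed image in $B$ then omits the unique closed point and is therefore empty, giving $\mathbb{W}\cap Z=\emptyset$. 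The argument is essentially formal; the only points that deserve care are the affineness of $X_{s_i}$ (\emph{i.e.}\ that deleting the zero scheme of a section of an ample bundle from a projective scheme over an affine base yields an affine scheme) and the passage from $\mathbb{W}_b\cap Z_b=\emptyset$ to $\mathbb{W}\cap Z=\emptyset$ over the local base, which is where I expect any genuine, if mild, obstacle to lie.
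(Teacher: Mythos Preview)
Your proof is correct and follows exactly the paper's approach: the paper's proof is the single sentence ``It is affine and projective,'' and you have simply unpacked this, verifying well-definedness via $M\cap\mathbb{W}=\emptyset$, affineness via the standard charts $\mathbb{P}^{r-1}_{x_i}$ with affine preimages $M\cap X_{s_i}$, and properness by cancellation. Your extra step deducing $\mathbb{W}\cap Z=\emptyset$ from $\mathbb{W}_b\cap Z_b=\emptyset$ over the local base is a detail the paper leaves implicit in the hypotheses but which you handle correctly.
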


\begin{proof}
It is affine and projective.
\end{proof}

\section{Proof of the infinite $k(b)$-case of Theorem \ref{Very_General}}\label{blowups_A_inf_A}
The main aim of this section is to prove Theorem \ref{thm_Artin_2_A}.
We follow in this section Notation used in Section
\ref{sect_DVR-second}.

\begin{constr}\label{hat X_DVR_2}
For the $X\subset \P^n_B$, the $W$ and the ${\bf f}=\{f_1,...,f_r\}$ as above
suppose the closed subscheme $\mathbb W_b\subset X_b$
is finite \'{e}tale over $\mathrm{Spec}(k(b))$.
In this case the closed subscheme $\mathbb W\subset X$ is finite \'{e}tale over $B$.
Define $\hat X$ as a closed subscheme of $X\times \mathbb P^{r-1}_B$
given by equations $s_jx_i=s_ix_j$.
We regard $\hat X$ as the blowup of $X$ at the subscheme $\mathbb W$.

\end{constr}
The projection $X\times \mathbb P^{r-1}\to \mathbb P^{r-1}$
and $\mathbb P^{r}\times \mathbb P^{r-1}\to \mathbb P^{r-1}$
induces a morphism
$\hat q: \hat X\to \mathbb P^{r-1}_B$
and
$\hat p: \hat {\mathbb P}^{r}_B\to \mathbb P^{r-1}_B$
allowing to consider the scheme $\hat X$
and $\hat {\mathbb P}^{r}_b$
as a $\mathbb P^{r-1}_B$-scheme.

\begin{notation}\label{tau_and_sigma_DVR_2}
The projection $X\times \mathbb P^{r-1}\to X$
and $\mathbb P^{r}\times \mathbb P^{r-1}\to \mathbb P^{r}$
induces a morphism
$\tau: \hat X\to X$
and
$\sigma: \hat {\mathbb P}^{r}\to \mathbb P^{r}$ respectively.\\
Write
$\mathbf 0$ for $[1:0:...:0]_B$ in $\mathbb P^{r}_B=\mathbb P^{r}$.\\
Put $E_{\mathbb W}=\tau^{-1}(\mathbb W)$
and
$E_0=\sigma^{-1}(\mathbf 0)$.\\
Note that $E_{\mathbb W}=\P^{r-1}_B\times_{B} \mathbb W=\P^{r-1}_{\mathbb W}$; \\
Write $\hat X\xleftarrow{i} E_{\mathbb W}=\P^{r-1}_{\mathbb W}$ \ for the closed embedding.\\
Write as above $\dot X$ for $X_{s_1}$.\\
Let $Z\subset X$ be the closed subset as in Notation \ref{sect:2_DVR_2}; then $Z\subset X-\mathbb W$.\\
Put $\dot Z=Z\cap \dot X$ as in Notation \ref{sect:2_DVR_2}.
\end{notation}

\begin{rem}
Clearly, the morphism
$\tau: \hat X-E_{\mathbb W}\to X-\mathbb W$
is an isomorphisms. \\
Put $\hat Z$ for $\tau^{-1}(Z)$ and $Z_{x_1}=\hat Z\cap \hat X_{x_1}$.
\end{rem}

\begin{prop}\label{X_s_1_DVR_2}
Put $\tau_1=\tau|_{\hat X_{x_1}-(E_{\mathbb W})_{x_1}}: \hat X_{x_1}-(E_{\mathbb W})_{x_1}\to \dot X$. Then \\
1) the morphism
$\tau_1: \hat X_{x_1}-(E_{\mathbb W})_{x_1}\to \dot X$
is an isomorphism; \\
2) the schemes $\hat X_{x_1}-(E_{\mathbb W})_{x_1}$ and
$\dot X$
are $B$-smooth.\\
3) the morphism
$\tau_1|_{\hat Z_{x_1}}: \hat Z_{x_1}\to \dot Z$
is an isomorphism.
\end{prop}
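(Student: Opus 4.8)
The plan is to derive all three assertions from a single input already recorded in the Remark preceding the statement, namely that $\tau\colon \hat X-E_{\mathbb W}\to X-\mathbb W$ is an isomorphism, and then to track how the open locus $\{x_1\neq 0\}$ and the closed subset $Z$ sit inside this isomorphism. First I would make the inverse of $\tau$ explicit: a point $p\in X-\mathbb W$ is sent to $(p,[s_1(p):\dots:s_r(p)])\in\hat X$. This is legitimate precisely because $\mathbb W=X_{1,\dots,r}$ is the common vanishing locus of $s_1,\dots,s_r$, so off $\mathbb W$ the tuple $[s_1:\dots:s_r]$ is a well-defined point of $\mathbb P^{r-1}_B$; and the defining equations $s_jx_i=s_ix_j$ of $\hat X$ (Construction \ref{hat X_DVR_2}) force $[x_1:\dots:x_r]=[s_1:\dots:s_r]$ away from $E_{\mathbb W}$.

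For assertions (1) and (2) I would note that $\hat X_{x_1}=\hat q^{-1}(\mathbb P^{r-1}_{x_1})$ is the open subscheme of $\hat X$ on which $x_1\neq 0$. Under the inverse map above this condition becomes $s_1\neq 0$, so $\tau$ carries $\hat X_{x_1}-(E_{\mathbb W})_{x_1}$ onto $(X-\mathbb W)\cap X_{s_1}$. The one point that must be checked is that this target equals $\dot X=X_{s_1}$ on the nose: since $\mathbb W=X_{1,\dots,r}\subset X_1=\{s_1=0\}$, the open set $X_{s_1}=X-X_1$ is already disjoint from $\mathbb W$, whence $(X-\mathbb W)\cap X_{s_1}=X_{s_1}=\dot X$. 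Restricting the isomorphism $\tau$ then yields the isomorphism $\tau_1\colon \hat X_{x_1}-(E_{\mathbb W})_{x_1}\to\dot X$ of (1). Assertion (2) is then formal: $\dot X=X_{s_1}$ is an open subscheme of the $B$-smooth scheme $X$, hence $B$-smooth, and $\hat X_{x_1}-(E_{\mathbb W})_{x_1}$ is $B$-smooth because $\tau_1$ identifies it with $\dot X$.

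For assertion (3) I would invoke the containment $Z\subset X-\mathbb W$ recorded in Notation \ref{tau_and_sigma_DVR_2} (which holds because $\mathbb W\cap Z=\emptyset$). Because $Z$ avoids the blow-up centre, $\hat Z=\tau^{-1}(Z)$ lies inside $\hat X-E_{\mathbb W}$ and $\tau$ restricts to an isomorphism $\hat Z\to Z$. Intersecting with the open locus $\hat X_{x_1}$ and using the identification from (1), the scheme $\hat Z_{x_1}=\hat Z\cap\hat X_{x_1}$ maps isomorphically onto $Z\cap X_{s_1}=\dot Z$, which is exactly the claim $\tau_1|_{\hat Z_{x_1}}\colon \hat Z_{x_1}\to\dot Z$.

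The argument is essentially bookkeeping once the explicit inverse of $\tau$ in the first paragraph is in place, so no genuine difficulty arises; the proposition is the verbatim analogue of Proposition \ref{X_s_1} in the finite-field case (Section \ref{blowups}). The only subtlety worth stating explicitly — and the step I would be most careful about — is the containment $\mathbb W\subset\{s_1=0\}$, which is what makes the locus $\{x_1\neq 0\}$ of the blow-up correspond to $X_{s_1}$ exactly, and simultaneously guarantees that $\dot X=X_{s_1}$ misses the blow-up centre $\mathbb W$ altogether.
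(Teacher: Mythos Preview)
Your proof is correct and follows the same approach as the paper. In fact, the paper gives no proof for this proposition (nor for the analogous Proposition~\ref{X_s_1} in Section~\ref{usual_blowups}); the only proof it offers is for the weighted version in Section~\ref{blowups}, where it simply says ``the first assertion is clear'' and then deduces smoothness of $\dot X$ from that of $X$. Your argument unpacks precisely what ``clear'' means there --- the explicit inverse $p\mapsto (p,[s_1(p):\dots:s_r(p)])$ and the key containment $\mathbb W\subset\{s_1=0\}$ --- so you have supplied the details the paper omits, with no deviation in strategy.
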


\begin{notation}\label{j=tau -1_DVR_2}
Write $j: \dot X\hra \hat X_{x_1}$ for $\dot X\xrightarrow{\tau^{-1}} \hat X_{x_1}-(E_{\mathbb W})_{x_1}\hra \hat X_{x_1}$.\\
Clearly, $j|_{\dot Z}$ identifies $\dot Z$ with $\hat Z_{x_1}$; \\
Clearly, $\dot q=\hat q\circ j: \dot X\to \mathbb A^{r-1}_B$; where $\dot q=(s_2/s_1,...,s_r/s_1)$ as in Notation \ref{sect:2_DVR_2}; \\
For an open $S$ in $\mathbb A^{r-1}_B$ put
$\hat X_S=\hat q^{-1}(S)$ and
$\hat Z_S=\hat Z\cap \hat X_S$; \\
Following Notation \ref{sect:2_DVR_2} one has $\dot X_S=\dot q^{-1}(S)$ and $\dot Z_S=Z\cap \dot X_S$;\\
clearly, $\dot X_S=j^{-1}(\hat X_S)$; \\
put $\dot q_{S}=\dot q|_{\dot X_S}: \dot X_S\to S$ and $\hat q_S=\hat q|_{\hat X_S}: \hat X_S\to S$ and $j_S=j|_{\dot X_S}: \dot X_S\hra \hat X_S$; \\
clearly, $j_S|_{\dot Z_S}$ identifies $\dot Z_S$ with $\hat Z_S$; \\
clearly, $\dot q_S=\hat q_S\circ j_S: \dot X_S\to S$; \\
put $\hat X_S\leftarrow S\times_{B} \mathbb W: \ i_S=i\times_{B} \mathbb W$; \\
clearly, $\hat q_S\circ i_S=pr_S: S\times_{B} \mathbb W \to S$.
\end{notation}

\begin{thm}\label{el_fib_DVR_inf_k_v}
Let $X\subset \Pro^n_B$ a $B$-smooth closed subscheme, irreducible and of relative dimension $r$,
${\bf x}\subset X$ be a finite subset of closed point.
Let $d\gg 0$,
{\bf $W$ in $S_{k(b),d}$ be the generic} dimension $r$ vector subspace
and
${\bf f}=\{f_1,\dots,f_{r}\}$ be its {\bf the generic} free $k(b)$-basis
as in of Corollary \ref{GeneralSection_inf k}.
Then there exists a neighborhood $S$ in $\mathbb A^{r-1}_B$ of the finite set $\dot q({\bf x})$
such that under Notation \ref{sect:2_DVR_2}, \ref{tau_and_sigma_DVR_2} and \ref{j=tau -1_DVR_2} the commutative diagram
of $B$-schemes and $B$-morphisms
\begin{equation}
\label{SquareDiagram_0_DVR_2}
    \xymatrix{
     \dot X_S\ar[drr]_{\dot q_S}\ar[rr]^{j_S}&&
\hat X_S\ar[d]^{\hat q_S}&&S\times_{B} \mathbb W\ar[ll]_{i_S}\ar[lld]^{pr_S} &\\
     && S  &\\    }
\end{equation}
is a diagram of the weak elementary fibration $\dot q_S: \dot X_S\to S$.

Let $Z\subset X$ be a closed subset with $\dim Z_b < r$.
Then one can choose the $W$ and the ${\bf f}$ above such that
the properties (i) to (iii) as in Corollary \ref{GeneralSection_inf k}
does hold and
$\mathbb W_b\cap Z_b=\emptyset$.
In this case the diagram \eqref{SquareDiagram_0_DVR_2} is a diagram of
the weak elementary $\dot Z_S$-fibration $\dot q_S: \dot X_S\to S$.
\end{thm}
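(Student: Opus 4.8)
The plan is to prove Theorem~\ref{el_fib_DVR_inf_k_v} by transcribing, essentially line for line, the argument of the finite residue field case Theorem~\ref{el_fib_DVR_fin_k_v}, with two bookkeeping changes: all the section degrees are now equal to the single $d$, so every $d_{i,j}=1$ and the weighted projective spaces $\mathbb P^{r,w}_B,\mathbb P^{r-1,w}_B$ degenerate to the ordinary $\mathbb P^r_B,\mathbb P^{r-1}_B$ of this section; and the Bertini data are supplied by Corollary~\ref{GeneralSection_inf k} (applied over the infinite residue field $k(b)$) rather than by the finite-field Theorem~\ref{Main3}. The one genuine difference is that the distinguished fibre over which smoothness of $\hat q$ is checked is taken over each of the finitely many points $\dot q(x)$, $x\in{\bf x}$, rather than over the origin.

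First I would run Corollary~\ref{GeneralSection_inf k} on the closed fibre $X_b\subset\mathbb P^n_b$, the finite set ${\bf x}\subset X_b$ and the closed subset $Z_b$ (recall $\dim Z_b<r$), producing $d\gg0$, a generic $W\subset S_{k(b),d}$ and a generic basis ${\bf f}=\{f_1,\dots,f_r\}$ satisfying properties (i)--(iv) and, for the last assertion, $\mathbb W_b\cap Z_b=\emptyset$. Lifting each $f_i$ to $F_i\in S_{A,d}$ and setting $s_i=F_i|_X$ as in Notation~\ref{sect:2_DVR_2} gives the finite morphism $\pi=[s_0:s_1:\dots:s_r]\colon X\to\mathbb P^r_B$ of Proposition~\ref{non_weighted_A} and the blowup $\hat X\subset X\times\mathbb P^{r-1}_B$ along $\mathbb W$ of Construction~\ref{hat X_DVR_2}. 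Since property (i) makes $\mathbb W_b$ finite \'etale of dimension zero over $k(b)$ and $B$ is local, $\mathbb W\to B$ is finite \'etale (nonempty, by the dimension count for $r$ generic sections on the $r$-dimensional $X_b$); hence the right-hand leg $\mathrm{pr}_S\colon S\times_B\mathbb W\to S$ of \eqref{SquareDiagram_0_DVR_2} is finite \'etale with nonempty fibres.

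Next I would establish the $B$-relative smoothness and flatness statements, i.e.\ the exact analogues of Proposition~\ref{smooth2DVR} and Lemma~\ref{hat_q_fl_proj}: that $\hat X_{x_1}$ is $B$-smooth and $\hat q\colon\hat X_{x_1}\to\mathbb A^{r-1}_B$ is flat projective. As in Section~\ref{blowups}, this rests on the base change $\pi_0\colon X_{s_0}\to\mathbb A^r_B$ being finite flat and \'etale over $\pi_0^{-1}(\mathbf 0)=\mathbb W$ (finite \'etale over $B$), together with the factorisation $\hat q=\hat p\circ\hat\pi_{x_1}$ in which $\hat p$ is the smooth projective $\mathbb P^1$-fibration and $\hat\pi_{x_1}$ is finite flat. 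With these in hand the $B$-scheme diagram is assembled from Notation~\ref{tau_and_sigma_DVR_2} and \ref{j=tau -1_DVR_2}: $j$ is the open embedding of Proposition~\ref{X_s_1_DVR_2} identifying $\dot X$ with $\hat X_{x_1}-(E_{\mathbb W})_{x_1}$, and $i$ is the closed embedding of $E_{\mathbb W}=\mathbb P^{r-1}_{\mathbb W}$. I would then prove the fibre lemma (the analogue of Lemma~\ref{smooth_fibreDVR} and Proposition~\ref{smooth_fibre}): for every $x\in{\bf x}$ the fibre $\hat q^{-1}(\dot q(x))$ is a $k(b)$-smooth projective curve, this being exactly the geometric smoothness of the one-dimensional scheme $\{s_1(x)s_i=s_i(x)s_1\}$ asserted in Corollary~\ref{GeneralSection_inf k}(iv), completed at infinity over $\mathbb W$. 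Smoothness of a flat projective morphism is an open condition on the base, so there is an open $S\subset\mathbb A^{r-1}_B$ containing $\dot q({\bf x})$ over which $\hat q_S\colon\hat X_S\to S$ is smooth projective, and flatness forces its fibres to be equidimensional of dimension one.

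Base-changing the diagram to $S$ then verifies the four conditions of Definition~\ref{DefnElemFib} for \eqref{SquareDiagram_0_DVR_2}: condition (ii) is the previous sentence; condition (iii) is the finite \'etale statement for $\mathbb W$; condition (i) holds because $\dot X_S=\hat X_S-(S\times_B\mathbb W)$ with $S\times_B\mathbb W$ finite over $S$, hence missing only finitely many points in each one-dimensional fibre; and condition (iv) comes from $\pi|_{\dot X}\colon\dot X\to\mathbb P^r_{t_1}=\mathbb A^1_{\mathbb A^{r-1}_B}$, the restriction of the finite $\pi$, which is surjective and flat by miracle flatness over $B$ (source and target $B$-smooth of equal relative dimension $r$), with $\dot q_S=\mathrm{pr}_S\circ(\pi|_{\dot X_S})$. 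One has ${\bf x}\subset\dot X_S$ since $s_1(x)\neq0$ and $\dot q(x)\in S$. For the final assertion, $\mathbb W_b\cap Z_b=\emptyset$ gives $\mathbb W\cap Z=\emptyset$ by specialisation (both finite over the local $B$), so Proposition~\ref{non_weighted2_A} makes $[s_1:\dots:s_r]\colon Z\to\mathbb P^{r-1}_B$ finite and hence $\dot Z_S\to S$ finite, upgrading the fibration to a weak elementary $\dot Z_S$-fibration. The main obstacle I anticipate is precisely the $B$-relative part of condition (iv) together with the $B$-smoothness of $\hat X_{x_1}$: over the non-field base one must genuinely check that the blowup stays smooth near the exceptional locus and that $\dot X\to\mathbb A^1_{\mathbb A^{r-1}_B}$ is flat, which is where finite-\'etale-ness of $\mathbb W\to B$ and miracle flatness do the work; the remainder is a faithful copy of the finite-field argument with all weights set equal to one.
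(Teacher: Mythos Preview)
Your proposal is correct and follows essentially the same approach as the paper's own proof: set all the weights $(e_0,\dots,e_r)=(1,\dots,1)$, transcribe the arguments of Section~\ref{blowups} (specifically Lemma~\ref{hat_q_fl_proj} and Proposition~\ref{smooth2}) to obtain that $\hat q\colon\hat X_{x_1}\to\mathbb A^{r-1}_B$ is flat projective with $\hat X_{x_1}$ $B$-smooth, and then use the Bertini input from Corollary~\ref{GeneralSection_inf k} on the closed fibre to pass to a smooth projective $\hat q_S$ over an open $S$ containing $\dot q({\bf x})$. Your write-up is in fact more explicit than the paper's: you spell out conditions (i)--(iv) of Definition~\ref{DefnElemFib} individually (in particular the miracle-flatness argument for condition~(iv), and the non-emptiness of the fibres of $\mathbb W\to B$), whereas the paper simply says ``repeating literally arguments'' and appeals to Corollary~\ref{GeneralSection_inf k} for the fibre smoothness, leaving the remaining verifications implicit.
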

Clearly, Theorem \ref{el_fib_DVR_inf_k_v} yields the following
\begin{cor}\label{cor: w_el_fib_inf_field_DVR_2}[= Theorem \ref{thm_Artin_2_A}]
Under the hypotheses of Theorem \ref{el_fib_DVR_inf_k_v}
let $S$ be the open neighborhood in $\mathbb A^{r-1}_B$ of the finite set
$\dot q({\bf x})$
as in Theorem \ref{el_fib_DVR_inf_k_v}.
Then the morphism \\
$\dot q_S: \dot X_S\to S$
is a weak elementary fibration.

Let $Z\subset X$ be a closed subset with $\dim Z_v < r$. Then one can choose
the $W$ and the ${\bf f}$ above subject to conditions
(i) to (iii) as in Corollary \ref{GeneralSection_inf k}
and additionally subject to the condition
$\mathbb W_b\cap Z_b=\emptyset$.
In this case the morphism
$\dot q_S: \dot X_S\to S$
is a weak elementary $\dot Z_S$-fibration.
\end{cor}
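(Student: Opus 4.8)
The plan is to mirror, term for term, the proof of Theorem \ref{el_fib_DVR_fin_k_v}, specialized to the unweighted situation. First I would take all the degrees equal (so that every $d_{i,j}=1$): then the weighted projective spaces $\mathbb P^{r,w}_B$, $\mathbb P^{r-1,w}_B$ become the ordinary $\mathbb P^r_B$, $\mathbb P^{r-1}_B$, and the weighted blowup $\hat X$ of Construction \ref{hat X_DVR_2} becomes the ordinary blowup of $X$ along $\mathbb W$. In place of Theorem \ref{Main3} I would invoke Corollary \ref{GeneralSection_inf k} to produce, for $d\gg 0$, the generic subspace $W$ and basis ${\bf f}$ whose properties (i)--(iv) (and the final item in the presence of $Z$) I shall use below; in particular property (i) makes $\mathbb W_b$ finite \'etale of dimension zero over $k(b)$, whence $\mathbb W$ is finite \'etale over $B$ as asserted in Construction \ref{hat X_DVR_2}.

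Next I would establish the unweighted analogue of Proposition \ref{smooth2DVR}: that $\hat X_{x_1}$ is $B$-smooth and that $\hat q\colon \hat X_{x_1}\to \mathbb A^{r-1}_B$ is flat projective. This is exactly the chain of arguments of Section \ref{blowups} (Lemmas \ref{flat_pi_0}, \ref{flat}, \ref{hat_q_fl_proj}, Corollary \ref{smooth} and Proposition \ref{smooth2}): using the factorization $\hat q=\hat p\circ\hat\pi_{x_1}$, I would check that $\hat\pi_{x_1}$ is finite flat (reducing to the fact that $\pi$ of Proposition \ref{non_weighted_A} is finite surjective between $B$-smooth schemes, hence \'etale over the image of $\mathbb W$) and that $\hat p$ is smooth projective with fibre $\mathbb P^1$. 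Smoothness of $\hat X_{x_1}$ is then patched from the two charts $\hat X_{x_1}-(E_{\mathbb W})_{x_1}$ (smooth by Proposition \ref{X_s_1_DVR_2}) and the \'etale cover of a smooth open of $\hat{\mathbb P}^r_{x_1}$ coming from $\pi$.

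The heart of the argument is the analogue of Lemma \ref{smooth_fibreDVR} (i.e.\ Proposition \ref{smooth_fibre}), but now carried out over the fibres of $\hat q$ lying over the points $\dot q(x)$, $x\in{\bf x}$, rather than over a single origin: I would show each such fibre $C_x=\hat q^{-1}(\dot q(x))$ is a smooth projective curve, with $\tau_b|_{C_x}$ identifying it with the subscheme $\{s_1(x)s_i=s_i(x)s_1\}_{i=2,\dots,r}$ of $X_b$. Here smoothness of $C_x$ is precisely what item (iv) of Corollary \ref{GeneralSection_inf k} grants, the genericity being phrased over $\bar k(b)$ and smoothness being checkable after the faithfully flat base change $k(b)\to\bar k(b)$. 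With these finitely many special fibres smooth of dimension one, flatness and projectivity of $\hat q$ together with openness of the smooth locus spread the smoothness to a Zariski open $S\subset \mathbb A^{r-1}_B$ containing $\dot q({\bf x})$, so that $\hat q_S\colon \hat X_S\to S$ is smooth projective with equidimensional one-dimensional fibres. Base-changing the diagram \eqref{SquareDiagramDVR} along $S\hookrightarrow \mathbb A^{r-1}_B$ produces \eqref{SquareDiagram_0_DVR_2}.

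It remains to verify conditions (i)--(iv) of Definition \ref{DefnElemFib}. Condition (ii) is the previous paragraph; (iii) holds because $\mathrm{pr}_S\colon S\times_B\mathbb W\to S$ is finite \'etale, $\mathbb W$ being finite \'etale over $B$; (i) follows from the blowup structure, which exhibits $j_S$ as the open immersion with complement $S\times_B\mathbb W$, dense on each fibre; and for (iv) I would use that $\pi|_{\dot X}$ is the restriction of the finite morphism $\pi$ of Proposition \ref{non_weighted_A} over the chart $\mathbb P^r_{t_1}=\mathbb A^1\times\mathbb A^{r-1}$, which after base change gives a finite surjection $\dot X_S\to \mathbb A^1_S$, flat by miracle flatness since source and target are $B$-smooth of equal relative dimension. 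For the $Z$-refinement I would combine the final item ($\mathbb W_b\cap Z_b=\emptyset$) of Corollary \ref{GeneralSection_inf k} with Proposition \ref{non_weighted2_A} to see that $\dot q_S|_{\dot Z_S}$ is finite, giving a weak elementary $\dot Z_S$-fibration. The main obstacle is the fibre-smoothness step: reconciling the $\bar k(b)$-genericity of Corollary \ref{GeneralSection_inf k} with $k(b)$-smoothness of the curves $C_x$, and then spreading that smoothness out to an open neighborhood of the \emph{whole} finite set $\dot q({\bf x})$ rather than of a single point.
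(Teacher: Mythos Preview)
Your proposal is correct and follows essentially the same approach as the paper: the paper derives this corollary immediately from Theorem~\ref{el_fib_DVR_inf_k_v}, whose proof (given just after the corollary) proceeds exactly as you outline---set all $e_i$ equal so the weighted constructions collapse to the ordinary blowup, repeat the arguments of Section~\ref{blowups} (Lemma~\ref{hat_q_fl_proj}, Proposition~\ref{smooth2}) to get $\hat q$ flat projective and $\hat X_{x_1}$ $B$-smooth, then use Corollary~\ref{GeneralSection_inf k} over the closed fibre to get smoothness of $\hat q$ over a neighborhood of $\dot q({\bf x})$ and spread it out. Your citation of item~(iv) for the fibre-smoothness step is the right one (the paper's text cites (iii), which appears to be a slip), and your explicit verification of conditions (i)--(iv) of Definition~\ref{DefnElemFib} and the $Z$-finiteness via Proposition~\ref{non_weighted2_A} are more thorough than, but consistent with, what the paper records.
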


\begin{proof}[Proof of Theorem \ref{el_fib_DVR_inf_k_v}]
The projections $X\times \mathbb P^{r-1}\to \mathbb P^{r-1}$
and
$\mathbb P^{r}\times \mathbb P^{r-1}\to \mathbb P^{r-1}$
induce morphisms
$\hat q: \hat X\to \mathbb P^{r-1}$
and
$\hat p: \hat {\mathbb P}^{r}\to \mathbb P^{r-1}$
allowing to consider the scheme $\hat X$
and $\hat {\mathbb P}^{r}$
as a $\mathbb P^{r-1}$-scheme.
Let $[x_1:...:x_r]$ be homogeneous coordinates on $\mathbb P^{r-1}$
and consider its the open subscheme
$\mathbb P^{r-1}_{x_1}$.
The morphism
$\pi\times id: X\times \mathbb P^{r-1}\to \mathbb P^{r}\times \mathbb P^{r-1}$
induces a morphism
$$\hat \pi: \hat X\to \hat {\mathbb P}^{r}_B.$$
This is a morphism of the $\mathbb P^{r-1}_B$-schemes. Put
$\hat X_{x_1}=q^{-1}(\mathbb P^{r-1}_{x_1})$
and
$\hat {\mathbb P}^{r}_{x_1}=p^{-1}(\mathbb P^{r-1}_{x_1})$.
Recall that $\mathbb P^{r-1}_{x_1}$ is the affine space $\mathbb A^{r-1}_B$.

Take $(e_0,e_1,...,e_r)$ from Section \ref{blowups}
equals $(1,1,...,1)$.
Now repeating literally arguments proving
Lemma \ref{hat_q_fl_proj} we see that
the morphism
$\hat q: \hat X_{x_1}\to \hat {\mathbb P}^{r-1}_{x_1}=\mathbb A^{r-1}_B$
is flat projective.
In turn,
repeating literally arguments proving
Proposition \ref{smooth2} we see that
the scheme $\hat X_{x_1}$ is $B$-smooth.

By the item (iii) of Corollary \ref{GeneralSection_inf k}
the morphism $\hat q_b: \hat X_b \to \P^{r-1}_b$ is smooth projective of relative dimension 1
over a neighborhood in $\mathbb A^{r-1}_b$ of the finite set $\dot q_b({\bf x})$.
Thus, the morphism
$\hat q_S: \hat X_S \to S$ is smooth projective for a
a neighborhood in $\mathbb A^{r-1}_B$ of the finite set $\dot q({\bf x})$.
Theorem \ref{el_fib_DVR_inf_k_v} is proved.\qedhere
\end{proof}


\begin{thebibliography}{EGAIII}

\bibitem[A]{LNM305}
\emph{Artin, M.} Comparaison avec la cohomologie classique:
cas d'un pr\'esch\'ema lisse, in {\it Th\'eorie des topos et cohomologie \'etale des sch\'emas (SGA 4). Tome 3.}
Lect.~Notes Math., vol.~305, Exp. XI, Springer-Verlag, Berlin-New York, 1973.

\bibitem[Bh]{Bh} \emph{Bhatwadekar, S.M.} Analitic isomorphismsms and category of finitely generated modules,
Comm. in Algebra, 16 (1988), 1949--1958.




\bibitem[BT]{BT73}
\emph{Borel, A.; Tits, J.} Homomorphismes ``abstraits'' de groupes
algebriques simples, Ann.~Math. 97 (1973), no.~3, 499--571.

\bibitem[Ch]{Ch} \emph{Chernousov, V.} Variations on a theme of groups splitting by a quadratic extension
and Grothendieck-Serre conjecture for group schemes $F_4$ with trivial $g_3$ invariant, Doc. Math., Extra Volume: Andrei A.
Suslin's Sixtieth Birthday (2010), 147--169.



\bibitem[C-T/O]{C-TO}
\emph{Colliot-Th\'el\`ene, J.-L.; Ojanguren, M.}
Espaces Principaux Homog\`enes Localement Triviaux,
Publ.~Math.~IH\'ES 75 (1992), no.~2, 97--122.


\bibitem[C-T/S]{C-T-S}
\emph{Colliot-Th\'el\`ene, J.-L.; Sansuc, J.-J.}
Principal homogeneous spaces under flasque tori: Applications,
Journal of Algebra 106 (1987), 148--205.


\bibitem[SGA3]{SGA3} \emph{Demazure, M.; Grothendieck, A.} Sch\'emas en groupes,
Lect.~Notes Math., vol. 151--153, Springer-Verlag, Berlin-Heidelberg-New York, 1970.


\bibitem[E]{E} \emph{Eisenbud, D.} Commutative algebra with a view toward algebraic geometry.
Graduate Texts in Mathematics 150, Springer-Verlag, New York, 1995.

\bibitem[FP]{FP} \emph{Fedorov, R.; Panin, I.} A proof of Grothendieck--Serre conjecture on principal bundles over a semilocal regular
ring containing an infinite field, Preprint, April 2013,
{http://www.arxiv.org/abs/1211.2678v2}.

\bibitem[Ga]{Ga} \emph{Gabber, O.} announced and still unpublished.

\bibitem[Gi]{Gille07}  \emph{Gille, Ph.} Le probl\`eme de Kneser-Tits, S\'em. Bourbaki 983
(2007), 983-01--983-39.


\bibitem[NG]{NG} \emph{Guo, N.} The Grothendieck--Serre conjecture over semi-local Dedekind rings
Transformation groups \textbf{27} (2022), no.~3, 897-917, DOI~10.1007/s00031-020-09619-8 (English). 

\bibitem[Gr1]{Gr1}
\emph{Grothendieck, A.}
Torsion homologique et section rationnalles,
in {\it Anneaux de Chow et applications},
S\'{e}minaire Chevalley, 2-e ann\'{e}e, Secr\'{e}tariat math\'{e}matique,
Paris, 1958.

\bibitem[EGAIII]{EGAIII1}
\emph{Grothendieck, A.}
\'{E}l\'{e}ments de g\'{e}om\'{e}trie alg\'{e}brique (r\'{e}dig\'{e}s avec la collaboration de Jean Dieudonn\'{e}) : III.
\'{E}tude cohomologique des faisceaux coh\'{e}rents, Premi\`ere partie, Publ.~Math.~IH\'ES 11 (1961),
5--167.

\bibitem[EGAIV]{EGAIV2}
\emph{Grothendieck, A.}
\'{E}l\'{e}ments de g\'{e}om\'{e}trie alg\'{e}brique (r\'{e}dig\'{e}s avec la collaboration de Jean Dieudonn\'{e}) : IV.
\'{E}tude locale des sch\'{e}mas et des morphismes de sch\'{e}mas, Seconde partie, Publ.~Math.~IH\'ES 24 (1965),
5--231.






\bibitem[Gr2]{Gr2}
\emph{Grothendieck, A.}
Le groupe de Brauer II, in {\it Dix expos\'{e}s sur la cohomologique de sch\'{e}mas},
Amsterdam, North-Holland, 1968.



\bibitem[GPan]{GPan}
\emph{Guo, N., Panin, I.} A PRESENTATION THEOREM FOR SMOOTH PROJECTIVE SCHEMES OVER
DISCRETE VALUATION RINGS, preprint 2023.



\bibitem[Hab]{Hab} \emph{Haboush, W.J.} Reductive groups are geometrically reductive, Ann. Math. 102 (1975), no.~1, 67--83.




\bibitem[Mo]{Mo} \emph{Moser, L.-F.} Rational triviale Torseure und die Serre-Grothendiecksche Vermutung,
Diplomarbeit,
2008,
http://www.mathematik.uni-muenchen.de/$\sim$lfmoser/da.pdf.

\bibitem[MV]{MV} \emph{Morel, F.; Voevodsky V.} $A^1$-homotopy theory of schemes, Publ. Math.~IH\'ES, 90 (1999), 45--143.

\bibitem[Mu]{Mu} \emph{Mumford, D.} Abelian Varieties, Oxford University Press, Oxford, 1970.

\bibitem[Na]{Na} \emph{Nagata, M.} Invariants of a group in an affine ring, J. Math. Kyoto Univ. 3 (1964), no.~3, 369--377.

\bibitem[Ni1]{Ni1} \emph{Nisnevich, E.A.} Affine homogeneous spaces and finite subgroups of arithmetic groups
over function fields,  Functional Analysis and Its Applications 11 (1977), no.~1, 64--66.



\bibitem[Ni2]{Ni} \emph{Nisnevich, Y.}
Rationally Trivial Principal Homogeneous Spaces and Arithmetic of
Reductive Group Schemes Over Dedekind Rings,
C.~R.~Acad.~Sci.~Paris, S\'erie I, 299 (1984), no.~1, 5--8.



\bibitem[OP1]{OP2}
\emph{Ojanguren, M.; Panin, I.}
A purity theorem for the Witt group,
Ann. Sci. Ecole Norm. Sup. (4) 32 (1999), no.~1, 71--86.


\bibitem[OP2]{OP1} \emph{Ojanguren, M.; Panin, I.}
Rationally trivial hermitian spaces are locally trivial,
Math.~Z. 237 (2001), 181--198.


\bibitem[OPZ]{OPZ} \emph{Ojanguren, M.; Panin, I.; Zainoulline, K.}
On the norm principle for quadratic forms,
J.~Ramanujan Math.~Soc. 19 (2004), no.~4, 1--12.

\bibitem[OS]{OS} \emph{Ojanguren, M.; Sridharan, R.}
Cancellation of Azumaya algebras,
J. Alg.  18 (1971), 501--505.

\bibitem[PS]{PS} \emph{Panin, I.; Suslin, A.}
On a conjecture of Grothendieck concerning Azumaya algebras,
St.~Petersburg Math.~J.  9 (1998), no.~4, 851--858.


\bibitem[P1]{Pa1} \emph{Panin, I.} A purity theorem for linear algebraic groups, Preprint,
2005,
{http://www.math.uiuc.edu/K-theory/0729}.

\bibitem[P2]{Pa2}
\emph{Panin, I.}
On Grothendieck---Serre's conjecture concerning
principal $G$-bundles over reductive group schemes: II,
Preprint, April 2013,
{http://www.math.org/0905.1423v3}.

\bibitem[Par]{Par}
\emph{Parimala, R.}
Indecomposable quadratic spaces over the affine plane,
Adv. Math. 62 (1986), 1--6.

\bibitem[PPeSt]{PPS}
\emph{Panin, I.; Petrov, V.; Stavrova, A.}
On Grothendieck--Serre's for simple adjoint group schemes of types $E_6$ and $E_7$,
Preprint, 2009,
{http://www.math.uiuc.edu/K-theory/}.



\bibitem[PeSt]{PS-f4} \emph{Petrov V.; Stavrova, A.}
Grothendieck---Serre conjecture for groups of type $F_4$ with trivial $f_3$
invariant, Preprint, 2009,
{http://www.mathematik.uni-bielefeld.de/LAG/man/374.html}.


\bibitem[PSV]{PSV} \emph{Panin, I.; Stavrova, A.; Vavilov, N.}
On Grothendieck---Serre's conjecture concerning
principal $G$-bundles over reductive group schemes: I,
Compositio Math. 151 (2015), 535--567.


\bibitem[PanSt]{PanSt} \emph{Panin, I. and Stavrova, A.}
On principal bundles over the relative projective line, preprint, 2023.

   



\bibitem[Po]{P}
\emph{Popescu, D.}
General N\'eron desingularization and approximation,
Nagoya Math.~J. 104 (1986), 85--115.

\bibitem[Poo]{Poo1}
\emph{Poonen, B.},
Bertini theorems over finite fields,
Annals of Mathematics, 160 (2004), 1099 –-1127.

\bibitem[ChPoo]{Poo2}
\emph{Charles, F., Poonen, B.},
Bertini irreducibility theorems over finite fields,
arXiv:1311.4960v1, 2013.


\bibitem[R1]{R1}
\emph{Raghunathan, M.S.} Principal bundles admitting a rational section,
Invent.~Math. 116 (1994), no. 1--3, 409--423.

\bibitem[R2]{R2}
\emph{Raghunathan, M.S.} Erratum: Principal bundles admitting a rational
section, Invent.~Math. 121 (1995), no.~1, 223.

\bibitem[R3]{R3}
\emph{Raghunathan, M.S.} Principal bundles on affine space and bundles on the projective line,
Math. Ann. 285 (1989), 309--332.



\bibitem[RR]{RR}
\emph{Raghunathan, M.S.; Ramanathan, A.} Principal bundles on the
affine line, Proc.~Indian Acad.~Sci., Math.~Sci. 93 (1984), 137--145.





\bibitem[Se]{Se}
\emph{Serre, J.-P.}
Espaces fibr\'{e}s alg\'{e}briques, in
{\it Anneaux de Chow et applications},
S\'{e}minaire Chevalley, 2-e ann\'{e}e, Secr\'{e}tariat math\'{e}matique,
Paris, 1958.


\bibitem[Sw]{Sw}
\emph{Swan, R.G.}
N\'eron---Popescu desingularization, Algebra and Geometry (Taipei, 1995),
Lect.~Algebra Geom. 2,
Internat.~Press, Cambridge, MA, 1998, 135--192.

\bibitem[T]{T}
\emph{Tits, J.} Algebraic and abstract simple groups,
Ann.~Math. 80 (1964), no. 2, 313--329.



\bibitem[Vo]{Vo}
\emph{Voevodsky, V.}
Cohomological theory of presheaves with transfers, in {\it
Cycles, Transfers, and Motivic Homology Theories},
Ann.~Math.~Studies, 2000, Princeton University Press.

\bibitem[Z]{Z}
\emph{Zainoulline, K.V.}
On Grothendieck's conjecture on principal homogeneous spaces for some classical algebraic groups,
St. Petersburg Math. J. 12 (2001), no.~1, 117--143.


\end{thebibliography}
\end{document}